\newcommand{\pa}{\partial}
\newtheorem{thm}{Theorem}[section]
\newtheorem{prop}[thm]{Proposition}
\newtheorem{Def}[thm]{Definition}
\newtheorem{lem}[thm]{Lemma}
\newtheorem{remark}[thm]{Remark}
\newtheorem{corl}[thm]{Corollary}
\newtheorem{example}[thm]{Example}
\newcommand{\Nat}{\mathbb{N}}
\newcommand{\eps}{\varepsilon}
\newcommand{\Hilbert}{\mathcal{H}}
\newcommand{\Dom}{\mathfrak{Dom}}
\def \<{\langle}
\def \>{\rangle}
\def \R{\mathbb R}
\def \H{{\cal H}}
\def \H^0{{\cal H}^0 or}
\def \w{\omega}
\def \p{\partial}
\def \n{\nabla}
\def \beq{\begin{equation}}
\def \eeq{\end{equation}}
\def \n{\nabla}
\def \eref{\eqref}
\begin{document}



\title{The spectrum of the Laplacian on forms}

\author{Nelia Charalambous}
\address{Department of Mathematics and Statistics, University of Cyprus, Nicosia, 1678, Cyprus} \email[Nelia Charalambous]{nelia@ucy.ac.cy}

 \author{Zhiqin Lu} \address{Department of
Mathematics, University of California,
Irvine, Irvine, CA 92697, USA} \email[Zhiqin Lu]{zlu@uci.edu}

\thanks{
The first author was partially supported by a University of Cyprus Start-Up grant. The second author is partially supported by the DMS-1510232.}
 \date{\today}

  \subjclass[2000]{Primary: 58J50;
Secondary: 58E30}

\keywords{essential spectrum, Weyl criterion, deformation of metrics, Gromov Hausdorff convergence}

\begin{abstract}
In this article we prove a generalization of Weyl's criterion for the spectrum of a self-adjoint nonnegative operator on a Hilbert space. We will apply this new criterion in combination with Cheeger-Fukaya-Gromov and Cheeger-Colding theory  to study the  $k$-form essential spectrum over a complete manifold with vanishing curvature at infinity or asymptotically nonnegative Ricci curvature.

In addition, we will apply the generalized Weyl criterion to study the variation of the spectrum of a self-adjoint operator under continuous perturbations of the operator.  In the particular case of the Laplacian on $k$-forms over a complete  manifold we will use these analytic tools to find significantly stronger results for its spectrum  including its behavior under a continuous deformation of the metric of the manifold.
\end{abstract}
\maketitle
\section{Introduction}

Let $H$ be a densely defined, self-adjoint and nonnegative operator on a Hilbert space $\mathcal{H}.$ The spectrum of $H$, which we will denote $\sigma(H)$,  consists of all points $\lambda\in \mathbb{C}$ for which $H-\lambda I$ fails to be invertible. Since $H$ is nonnegative, its  spectrum  is contained in $[0,\infty)$. The  essential spectrum of  $H$, $\sigma_\textup{ess}(H)$, consists of the cluster points in the spectrum and of isolated eigenvalues of  infinite multiplicity. It is well known that both $\sigma(H)$ and $\sigma_{\rm ess}(H)$ are closed sets in  $\mathbb R$ and in $\mathbb{C}$.

In this article, we study the spectrum and the essential spectrum of the Hodge Laplacian on differential forms over a complete Riemannian manifold.  The set of points in the spectrum that do not belong to the essential spectrum are called the pure point spectrum. When the manifold is compact, the essential spectrum is an empty set, and the spectrum coincides with the pure point spectrum. When the manifold is complete and noncompact, both may exist. When the essential spectrum is a connected set, that is, it consists of an interval $[\alpha,\infty)$, then the only invariant is its lower bound $\alpha$.~\footnote{When the essential spectrum is the empty set, we define its lower bound to be $\infty$.} In this case, we say that the essential spectrum is \emph{computable}.
Unlike the pure point spectrum  which we can only compute in a few specialized cases, the essential spectrum is computable over a larger variety of manifolds.

We will find a large class of manifolds so that the $k$-form essential spectrum is computable. On the other hand, there exist examples of manifolds where the function essential spectrum has gaps (cf.~\cites{Post1,Lott2,SchoTr}). We believe that similar examples should exist for the $k$-form essential spectrum as well. Given that the presence of gaps has only been demonstrated in cases of manifolds with negative curvature, it seems that our results provide very broad curvature conditions so that a manifold has computable $k$-form essential spectrum.

The computation of the spectrum (essential spectrum) of the Laplacian requires the construction of a large class of test differential forms. This is a difficult task on a general manifold, since there exists only a small collection of canonically defined differential forms to work with. We will combine two somewhat ``opposite'' methods to achieve this goal: first, we introduce a new version of the Weyl criterion,  which greatly reduces the regularity and smoothness of the test differential forms; second, we make use of Cheeger-Fukaya-Gromov theory and Cheeger-Colding theory to obtain a new type of test differential forms on the manifold.

Although a preliminary version has been used in~\cite{char-lu-1}, the new Weyl criterion (Theorem \ref{Thm.Weyl.bis-4}) we provide is  more general and powerful, and is the one we will apply throughout this paper.  Moreover, our methodical use of collapsing theory for locating the essential spectrum seems to be new, and is an important component of the computation of the essential spectrum on forms.

The first  main result of this paper is the following
\begin{thm}\label{thm1}
Let $(M^n,g)$ be a complete noncompact Riemannian manifold which is asymptotically flat and whose cone at infinity is smooth except possibly at a single singularity point.  Then, the essential spectrum of the Laplacian on $k$-forms is either empty or is equal to $[\alpha_k,\infty)$ for a nonnegative number $\alpha_k$.
\end{thm}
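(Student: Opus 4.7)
The plan is to establish that $\sigma_{\textup{ess}}(\Delta_k)$, if nonempty, is an interval $[\alpha_k,\infty)$ by verifying the following scaling property: if $\lambda_0 \in \sigma_{\textup{ess}}(\Delta_k)$, then $[\lambda_0,\infty) \subseteq \sigma_{\textup{ess}}(\Delta_k)$. Granting this, since the essential spectrum is closed, setting $\alpha_k := \inf \sigma_{\textup{ess}}(\Delta_k)$ gives $\sigma_{\textup{ess}}(\Delta_k) = [\alpha_k,\infty)$ immediately.

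To verify the scaling property, first apply the generalized Weyl criterion (Theorem \ref{Thm.Weyl.bis-4}) to $\lambda_0 \in \sigma_{\textup{ess}}(\Delta_k)$ to produce a sequence $\{\omega_j\}$ of unit-$L^2$ test $k$-forms with $\|(\Delta_k - \lambda_0)\omega_j\|_{L^2} \to 0$, whose supports can be cut off to lie in annuli $\{r_j < \rho < 2r_j\}$ for some $r_j \to \infty$, where $\rho$ denotes distance from a fixed basepoint. The relaxed regularity allowed by the generalized criterion is essential here, because the forms produced by the transport step to follow will not in general be smooth.

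Next, bring in the asymptotic geometry. By Cheeger--Fukaya--Gromov and Cheeger--Colding theory, the hypothesis that $(M,g)$ is asymptotically flat with smooth tangent cone $C(X)$ (away from a single apex singularity) provides, for large $j$, $C^{1,\alpha}$-regular diffeomorphisms between a fixed annulus of $(C(X), g_C)$ bounded away from the apex and the corresponding region in $(M, r_j^{-2} g)$. I would use these to transport the $\omega_j$ to test forms $\tilde\omega_j$ on $C(X)$ which, after normalization, approximate an eigenform of $\Delta_{C}^{g_C}$ at eigenvalue $\lambda_0$. The cone admits the dilation symmetry $D_s(r,x) = (sr,x)$, which satisfies $D_s^* g_C = s^2 g_C$ and therefore intertwines $\Delta_{C}^{g_C}$ with $s^{-2}\Delta_{C}^{g_C}$ on $k$-forms. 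Given any target value $\lambda \geq \lambda_0$, I would set $s = \sqrt{\lambda_0/\lambda} \in (0,1]$ and form $D_s^* \tilde\omega_j$, an approximate eigenform of $\Delta_{C}^{g_C}$ at eigenvalue $\lambda$ supported in $\{s^{-1} < \rho < 2s^{-1}\}$, still safely away from the apex. Finally, transport these rescaled forms back to $(M,g)$ via the Cheeger--Colding diffeomorphism at a suitably large radius $r_{j'}$, chosen large enough that the cone approximation holds on the required annulus. The outcome is a sequence of test forms on $M$ with supports escaping to infinity and Rayleigh quotients concentrating at $\lambda$, so a second application of Theorem \ref{Thm.Weyl.bis-4} places $\lambda$ in $\sigma_{\textup{ess}}(\Delta_k)$, completing the scaling step.

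The principal difficulty is the quantitative $L^2$ control of the Hodge Laplacian under the transport: since $\Delta_k$ is second order and depends on the metric through its connection and curvature, the diffeomorphisms used must control $g$ uniformly up to $C^{1,\alpha}$. Making the estimate close requires carefully combining the low-regularity formulation of the generalized Weyl criterion with the $C^{1,\alpha}$ regularity of the Cheeger--Colding structure on the regular part of the cone, and keeping every support uniformly bounded away from the single potential conical singularity. Executing this transport and its associated commutator estimates is the technical heart of the argument.
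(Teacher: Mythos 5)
Your reduction (``if $\lambda_0\in\sigma_{\rm ess}$ then $[\lambda_0,\infty)\subset\sigma_{\rm ess}$, hence the essential spectrum is empty or a half-line'') is logically fine, but the dilation argument you propose to verify it does not work under the paper's hypotheses, because it operates at the wrong scale. The cone at infinity in this setting (Definition \ref{cone}) is a \emph{blow-down} limit: one rescales the metric by $\eps_i\to 0$ before taking the pointed Gromov--Hausdorff limit, and the convergence is in general \emph{collapsed}, so there are no $C^{1,\alpha}$ diffeomorphisms between annuli of $M$ and annuli of the cone at all --- only Hausdorff approximations, or Cheeger--Fukaya--Gromov fibrations with positive-dimensional almost-flat fibers. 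Even in the non-collapsed case, if you transport an approximate $\lambda_0$-eigenform supported in $\{r_j<\rho<2r_j\}$ through the map to the cone, the correct rescaling turns it into an approximate eigenform of $\Delta_{g_C}$ at eigenvalue $r_j^2\lambda_0\to\infty$, not $\lambda_0$; the $o(1)$ metric errors of the convergence cannot beat that. At unit scale --- the scale at which a fixed eigenvalue $\lambda$ lives --- the manifold near infinity is modeled not on the cone but on flat quotients $S^{n-s}\times\mathbb{R}^s$ (or fibrations with almost-flat fibers over the cone), and these models have compact factors of fixed size that carry no dilation symmetry; their contribution $\alpha(S,s,n,k)$ to the spectrum is exactly what your scaling step cannot see. (As a symptom that the mechanics were not checked: with $D_s^*g_C=s^2g_C$ one gets $\Delta_{g_C}(D_s^*\tilde\omega)=s^2\lambda_0 D_s^*\tilde\omega$, so reaching $\lambda\ge\lambda_0$ forces $s=\sqrt{\lambda/\lambda_0}\ge 1$, which pushes the support \emph{toward} the apex, not away from it.)

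The paper's route is genuinely different and is forced by these obstructions. The half-line structure comes not from a dilation of $M$ but from the Euclidean directions of the local limit models: on $S^{n-s}\times\mathbb{R}^s$ one multiplies an $\alpha_k$-eigenform of the compact factor by oscillating cut-off plane waves $e^{i\sqrt{\mu}\,x}\chi$ on $\mathbb{R}^s$ to shift the eigenvalue up by any $\mu\ge 0$ (Theorem \ref{thmF2}); the lower endpoint $\alpha_k$ is pinned down by the localization Lemmas \ref{lemSp2}--\ref{lem7b} (minimal sequences); the transfer of test forms needs only $C^0$-closeness of metrics because Lemma \ref{lemSp} and the generalized Weyl criterion reduce everything to the quadratic forms $\|d\omega\|^2$, $\|\delta\omega\|^2$; and the collapsed and singular cases require the fibration analysis of Sections \ref{8}--\ref{9}, including the fiber eigenvalue contribution $\lambda_{\rm exp}$ and the argument of Theorem \ref{thm9c} for pushing test forms off the singular point. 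A correct proof of Theorem \ref{thm1} must essentially carry out this program; the scaling shortcut does not substitute for it.
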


We will show that whenever a manifold has Ricci curvature asymptotically nonnegative (which includes the case of asymptotical flatness), the bottom of its essential spectrum is captured by a sequence of large geodesic balls which we will call a minimal sequence (see Definition \ref{defMin}). Depending on the behavior of the injectivity radius of these geodesic balls, we will be able to further classify the essential spectrum on $k$-forms. Theorem \ref{thm1} can be inferred from the following stronger result.
\begin{thm} \label{mainthm}
Let $(M^n,g)$ be a complete noncompact asymptotically flat Riemannian manifold of dimension $n$.

(i) If there exists a minimal sequence of geodesic balls $B_{x_{i}}(R_{i})$ such that the injectivity radius at $x_i$, $I(x_i)$, satisfies $I(x_i) \to \infty,$ then   $\sigma_\mathrm{ess}(k,\Delta)=[0,\infty)$ for all $0\leq k \leq n$.

(ii) If there exists a minimal sequence of geodesic balls $B_{x_{i}}(R_{i})$ with  $0< a\leq I(x_i)  \leq b,$ then   $\sigma_\mathrm{ess}(k,\Delta)=[\alpha_k,\infty),$ for some constant $\alpha_k\geq 0$.

(iii) If there exists a minimal sequence of geodesic balls $B_{x_{i}}(R_{i})$ with $I(x_i)   \to 0$ and $B_{x_{i}}(R_{i})$ has a cone at infinity with at most a single singularity point, then either    $\sigma_\mathrm{ess}(k,\Delta)$ is empty, or  $\sigma_\mathrm{ess}(k,\Delta)=[\alpha_k,\infty)$ for some constant $\alpha_k\geq 0$.
\end{thm}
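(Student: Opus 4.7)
The plan is to combine the generalized Weyl criterion (Theorem~\ref{Thm.Weyl.bis-4}) with the structure theory for Gromov-Hausdorff limits of manifolds with vanishing sectional curvature at infinity. By the defining property of a minimal sequence (Definition~\ref{defMin}), the balls $B_{x_i}(R_i)$ already support a Weyl sequence realizing $\alpha_k := \inf \sigma_\mathrm{ess}(k,\Delta)$, so the task reduces to producing, for every $\lambda > \alpha_k$, a Weyl sequence of $k$-forms with Rayleigh quotients tending to $\lambda$ and supports escaping to infinity. The three cases correspond to the three asymptotic geometries that asymptotic flatness allows for $B_{x_i}(R_i)$, distinguished by the injectivity radius behavior, and each is treated by constructing model test forms on the limit space and pulling them back to $M$.

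In case (i), since $I(x_i)\to\infty$ and sectional curvatures vanish in the limit, the pointed limit of $(M,g,x_i)$ along the sequence is Euclidean space $(\mathbb{R}^n,g_0)$ with smooth convergence on bounded subsets. The Hodge Laplacian on $\mathbb{R}^n$ has essential spectrum $[0,\infty)$, realized for any $\lambda\geq 0$ by translates and rescalings of a fixed compactly supported model form. Pulling each such form back through the smooth near-isometry onto $B_{x_i}(R_i)$ yields a Weyl sequence on $M$, once the $O(\eps_i)$ metric perturbation is absorbed by the generalized Weyl criterion.

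In case (ii), the two-sided bound $a\leq I(x_i)\leq b$ together with asymptotic flatness gives (after passing to a subsequence) a Cheeger-Gromov $C^{1,\alpha}$ limit $(X,g_\infty,x_\infty)$ which is a smooth flat Riemannian manifold with injectivity radius at least $a$. Its $k$-form Laplacian has a closed spectrum, whose infimum we denote $\alpha_k$, and every $\lambda\geq \alpha_k$ is realized by a compactly supported test form on $X$; we again pull back through the approximate isometry onto $B_{x_i}(R_i)$ and feed into the generalized Weyl criterion to obtain the desired sequence in $M$.

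Case (iii) is the most delicate. Since $I(x_i)\to 0$, Cheeger-Fukaya-Gromov collapsing theory produces a lower-dimensional Alexandrov limit $X$ which, by the cone-at-infinity hypothesis, coincides on its regular part with a smooth metric cone having at most one singular apex. Test $k$-forms built on the smooth locus of this cone, compactly supported and cut off away from the apex, are lifted to $B_{x_i}(R_i)$ using the local $N$-structure of the collapsing: one pulls up from the base and averages against the vertical fibers to obtain honest $k$-forms on $M$. The \emph{main obstacle} is controlling the Rayleigh quotients of these lifted forms simultaneously against the $O(\eps_i)$ errors from the collapsing and against the proximity to the conical singularity; this is precisely where the generalized Weyl criterion is indispensable, since it drastically relaxes the smoothness and regularity demands on test forms, allowing the lift-and-average construction to succeed and the value $\lambda$ to be captured in the essential spectrum.
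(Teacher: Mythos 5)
Your outline of cases (i) and (ii) tracks the paper's strategy (pull back test forms from the $C^{1,\alpha}$ smooth limit via the generalized Weyl criterion, as in Theorem~\ref{thmSpecGH}), but even in case (ii) you elide two non-trivial steps. First, the claim that ``every $\lambda\geq\alpha_k$ is realized by a compactly supported test form on $X$'' presupposes that the $k$-form spectrum of the flat noncompact limit $X=\R^n/\Gamma$ is a full ray; this is not automatic and is exactly the content of Theorem~\ref{thmF2}, which rests on the classification $\sigma(k,\Delta,\R^n/\Gamma)=\sigma_{\rm ess}(k,\Delta,S^{n-s}\times\R^s)=[\alpha(S,s,n,k),\infty)$. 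Second, you define $\alpha_k$ once as $\inf\sigma_{\rm ess}(k,\Delta,M)$ and once as the bottom of the spectrum of the limit space, and silently identify the two. The pullback only gives $\inf\sigma_{\rm ess}(k,\Delta,M)\leq\alpha(S,s,n,k)$; the reverse inequality requires the localization result (Lemma~\ref{lemSp2} and Lemma~\ref{lem7b}) showing that the minimal sequence captures the bottom of the essential spectrum together with the convergence of the Rayleigh quotients of $B_{x_i}(R_i)$ to that of $X$. Without it you cannot rule out essential spectrum below $\alpha(S,s,n,k)$.

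Case (iii) has more serious gaps. (1) Your lift construction --- ``pull up from the base and average against the vertical fibers'' --- produces only forms that are pullbacks from the base, hence can realize at most the part of the spectrum corresponding to the trivial $0$-form on the fibers. The actual infimum $\alpha_k=\min\{\lambda_B(k-l)\mid 0\leq l\leq s\}$ (Definition~\ref{lem81}) is attained by test forms carrying nontrivial parallel $l$-form components along the almost flat fibers; your construction misses these entirely when $s<k\leq n/2$. (2) Even for pullbacks, the fibration $f_i$ being an almost Riemannian submersion with almost totally geodesic fibers does \emph{not} make the metric almost a product: the O'Neill $A$-tensor is an obstruction, and the bulk of Section~\ref{8} (Theorem~\ref{thm10}) is a rescaling argument to kill it. You do not address this. (3) Your proposal only manufactures Weyl sequences, so it has no mechanism to produce the ``empty essential spectrum'' alternative in the statement, which occurs when the fibers admit no parallel forms of the relevant degrees and the fiber eigenvalues blow up like ${\rm diam}(Z)^{-2}$. (4) Most importantly, cutting your test forms off away from the apex only yields containments $\lambda\in\sigma_{\rm ess}$; it does not show the essential spectrum is a connected interval starting at $\alpha_k$, because a priori the infimum of the essential spectrum could be carried by forms concentrating at the singular point. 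The paper's Theorem~\ref{thm9c} handles precisely this: one takes genuine eigenforms $\omega_i$ on $\bar M_i$, lifts them to the tangent space, observes that the smooth limit is a nonzero eigenform on $\R^n$ with infinite $L^2$ norm so the mass of $\omega_i$ escapes every fixed ball around $x_i$, and then runs a Gromov-cover cutoff argument to relocate the bottom of the Rayleigh quotient onto balls converging to the smooth part of $X'$. That localization-away-from-the-singularity step is the crux of case (iii) and is absent from your proposal.
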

Note that the manifold could have varying types of minimal sequences as described in the theorem above. However, they will all lead to the same value for $\alpha_k$.  Theorem \ref{mainthm} is more general than Theorem \ref{thm1} since it only requires knowing the structure at infinity of the minimal geodesic sequence and not of the whole manifold. For a precise definition of a cone at infinity see Definition \ref{cone}.  The proof of Theorem \ref{mainthm} combines Cheeger-Fukaya-Gromov theory~\cite{CFG} and the new Weyl criterion. We will be addressing the noncollapsing case in Section~\ref{7} and the collapsing cases in Sections~\ref{8} and~\ref{9}. In Section \ref{9} we will also study a more complex case of a singular cone at infinity.

Here  and for the rest of the paper, we shall use either $\sigma(k,\Delta,M)$, or $\sigma(k,\Delta)$ ($\sigma_{\rm ess}(k,\Delta,M)$, or  $\sigma_{\rm ess}(k,\Delta)$ \emph{resp}.) to denote the spectrum (essential spectrum \emph{resp}.) of the Laplacian on $k$-forms, where $0\leq k\leq n$. We remark that since our assumption only concerns the ends of the manifold, we have no control over the point spectrum.

The behavior of the spectrum becomes more complicated when one only assumes Ricci curvature asymptotically nonnegative. Although we still get pointed convergence of a sequence of balls to a metric space in the (pointed) Gromov-Hausdorff sense, the limit is significantly singular.  In Section \ref{10} we will study the $k$-form spectrum over noncompact manifolds with Ricci curvature asymptotically nonnegative on a sequence of expanding balls. We will show
\begin{thm} \label{thmRic}
Let $(M^n,g)$  be a noncompact  complete  Riemannian manifold. Assume that on a sequence of disjoint geodesic balls $M_i=B_{x_i}(R_i)$ with $R_i\to \infty$, there exist $\delta_i \to 0$ such that
\[
\mathrm{Ric}_{M_i} \geq - \delta_i.
\]
Then, there exists a positive integer, $q>0$, such that for each $k\leq q$  and $k\geq n-q$
\[
\sigma(k,\Delta, M)=\sigma_{\rm ess}(k,\Delta, M) = [0,\infty).
\]
\end{thm}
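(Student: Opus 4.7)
The plan is to combine Cheeger-Colding theory for Ricci limit spaces with the generalized Weyl criterion (Theorem \ref{Thm.Weyl.bis-4}). Since $\sigma_\mathrm{ess}\subseteq\sigma\subseteq[0,\infty)$, it suffices to show $[0,\infty)\subseteq\sigma_\mathrm{ess}(k,\Delta,M)$. Moreover, the Hodge star is an $L^2$-isometry commuting with $\Delta$ that swaps $k$-forms with $(n-k)$-forms, so it is enough to produce, for every $\lambda\ge 0$ and every $0\le k\le q$ with a suitable integer $q\ge 1$, an $L^2$ sequence of $k$-forms $\omega_i$ supported in $M_i$ with $\|\omega_i\|$ bounded away from $0$ and $\|(\Delta-\lambda)\omega_i\|\to 0$. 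Since the $M_i$ are pairwise disjoint, a uniform bound on $\|\omega_i\|$ automatically forces $\omega_i\rightharpoonup 0$ in $L^2$, supplying the weak convergence hypothesis of the Weyl criterion.

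The first step is to extract a Ricci limit. Viewing $(M_i,x_i)$ as pointed metric spaces, the hypothesis $\mathrm{Ric}_{M_i}\ge -\delta_i$ together with $R_i\to\infty$ and $\delta_i\to 0$ yields, along a subsequence, pointed Gromov-Hausdorff convergence to a complete unbounded Ricci limit space $(Y,y_\infty)$ carrying a generalized nonnegative Ricci lower bound. By the Cheeger-Colding-Naber theory, $Y$ has a well-defined essential dimension $1\le q\le n$, and at some point $y_0\in Y$ a tangent cone is isometric to the Euclidean space $\mathbb{R}^q$. This integer $q$ will be the one in the statement.

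Zooming in at $y_0$, I choose points $p_i\in M_i$ and scales $r_i\to 0$ such that for each fixed $L>0$, the rescaled balls $(B_{p_i}(Lr_i),\,r_i^{-2}g,\,p_i)$ are arbitrarily Gromov-Hausdorff close to the Euclidean ball of radius $L$ in $\mathbb{R}^q$. Cheeger-Colding's almost-splitting theorem then supplies $\epsilon_i$-splitting maps $u_i:B_{p_i}(Lr_i)\to\mathbb{R}^q$ whose coordinate functions are integrally almost harmonic, have integrally almost-orthonormal gradients, and satisfy small $L^2$-Hessian bounds, with $\epsilon_i\to 0$. For fixed $\lambda\ge 0$ and $0\le k\le q$, I pick on $\mathbb{R}^q$ a smooth compactly supported approximate $\lambda$-eigen-$k$-form
\[
\eta = \phi(x)\,\cos(\sqrt{\lambda}\,x_1)\,dx^{j_1}\wedge\cdots\wedge dx^{j_k},
\]
where $\phi$ is a cutoff chosen so that $(\Delta-\lambda)\eta$ is arbitrarily small in $L^2$, and then set $\omega_i:=u_i^*\eta$, extended by zero outside $M_i$.

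The final step is to verify the Weyl estimates $\liminf\|\omega_i\|_{L^2}>0$ and $\|(\Delta-\lambda)\omega_i\|_{L^2}\to 0$. The norm bound follows from the almost volume-preserving behavior of $u_i$ on the support of $\phi$, while the Laplacian estimate exploits that the integrally almost harmonic $u_i^{j}$ make $\Delta u_i^{*}(dx^{j_1}\wedge\cdots\wedge dx^{j_k})$ small in $L^2$, leaving $u_i^{*}(\Delta\eta)\approx\lambda\omega_i$ up to errors controlled by $\epsilon_i$ and the approximation error of $\eta$. This is the principal technical obstacle, because the $\epsilon_i$-splittings give only integral, not pointwise, control of the geometry, and the classical smooth Weyl criterion cannot handle such low-regularity test forms. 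The generalized Weyl criterion is engineered precisely to accept this class of test forms, and applying Theorem \ref{Thm.Weyl.bis-4} gives $\lambda\in\sigma_\mathrm{ess}(k,\Delta,M)$ for all $0\le k\le q$. Hodge duality then extends the conclusion to $n-q\le k\le n$, completing the proof.
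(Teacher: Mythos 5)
Your overall architecture matches the paper's: rescale the balls so the Ricci lower bound tends to $0$, pass to a pointed Gromov--Hausdorff limit, zoom in at a $q$-regular point to get convergence to $\mathbb{R}^q$, pull back Euclidean approximate eigen-$k$-forms through Cheeger--Colding harmonic splitting maps $\Phi_i=(b_{i,1},\dots,b_{i,q})$, and use disjointness of the balls for the weak convergence. The problem is in your final step, which is exactly where the real difficulty of the theorem sits. You propose to verify the classical Weyl estimate $\|(\Delta-\lambda)\omega_i\|_{L^2}\to 0$, and you justify it by asserting that the almost harmonicity of the $b_{i,j}$ makes $\Delta\bigl(db_{i,j_1}\wedge\cdots\wedge db_{i,j_k}\bigr)$ small in $L^2$. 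This is not available: expanding that Laplacian produces terms \emph{quadratic} in the Hessians of the $b_{i,j}$ (and, via Weitzenb\"ock, the full curvature tensor, which a Ricci lower bound does not control), while the splitting maps only give $L^2$ bounds on the Hessians. What follows from those bounds is $L^1$ smallness of $\Delta(db_{i,1}\wedge\cdots\wedge db_{i,k})$, not $L^2$ smallness; $L^2$ control would require $L^4$ Hessian bounds that Cheeger--Colding theory does not provide. So the estimate you plan to prove is precisely the one that is out of reach, and the classical Weyl criterion cannot be made to close. You cite the generalized Weyl criterion as the fix, but you never use the feature of it that actually matters.

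The paper's resolution is to estimate the resolvent-weighted pairings $\bigl((\Delta+\alpha)^{-m}\omega_i,(\Delta-\lambda)\omega_i\bigr)$ of Corollary \ref{cor21} instead of the $L^2$ norm. For the dangerous term one writes $\eta_i=(\Delta+\alpha)^{-m}\omega_i$ and integrates by parts,
\[
\bigl(\eta_i,\ \phi_i\,\Delta(db_{i,1}\wedge\cdots\wedge db_{i,k})\bigr)
=\bigl(\delta(\phi_i\eta_i),\ \delta(db_{i,1}\wedge\cdots\wedge db_{i,k})\bigr),
\]
so that the Hessians of the $b_{i,j}$ now appear only \emph{linearly} and the $L^2$ Hessian bound suffices, while $\|\delta(\phi_i\eta_i)\|\le C\|\omega_i\|$ follows from the quadratic-form identity for the resolvent. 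Without this (or an equivalent device for trading one derivative off the test form onto the resolvent factor), your argument has a genuine gap at its central step. The rest of your outline (choice of $q$, the lower bound $\liminf\|\omega_i\|>0$ via almost-orthonormality of the $\nabla b_{i,j}$, and Hodge duality for $k\ge n-q$) is consistent with the paper.
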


The above result is also true when the manifold has Ricci curvature asymptotically nonnegative. The integer $q$ depends on the behavior of the manifold at infinity.  For example, if $M$ has nonnegative Ricci curvature and Euclidean volume growth, then $q=n$, and hence we know the $k$-form spectrum for all $k$. We remark that although in this case the manifold is a metric cone at infinity by Cheeger-Colding theory,  the classical Weyl criterion is not sufficient to compute the essential spectrum of the Laplacian on differential forms. Thus even in this special case, our result follows through a combination of our generalized Weyl criterion with Cheeger-Colding theory.

Theorem~\ref{thm1} and Theorem~\ref{mainthm} are obviously true for the covariant Laplacian as well.  Our result in Theorem~\ref{thmRic} is  true for the covariant Laplacian, if we assume that the manifold is asymptotically Ricci flat. We remark that  in this case, even though the curvature tensor may blow up at infinity, our spectrum results are true for both the Hodge and the covariant Laplacian.

We will apply the generalized Weyl criterion to obtain other interesting results for the $k$-form spectrum. We  show that a point in the $k$-form spectrum of the Laplacian  must belong to either the $(k-1)$-form spectrum or to the  $(k+1)$-form spectrum  (Theorem \ref{thm31}). It then immediately follows that the   spectrum of the Laplacian on 1-forms always contains the  spectrum of the Laplacian on functions (Corollary \ref{cor31}). We emphasize the fact that this theorem is an analytic result, which does not impose any assumptions on the curvature nor the volume growth of the manifold. Our results  show that we do not always have to make stronger geometric assumptions on the manifold to compute the $k$-form spectrum, in comparison to the Laplacian on functions. In Section \ref{SecForm} we will join Theorem \ref{thm31} with a previous result of ours to conclude that on manifolds with Ricci curvature asymptotically nonnegative in the radial direction, the  essential spectrum of the Laplacian on 1-forms is $[0,\infty)$, whenever the volume of the manifold is infinite, or if the volume is finite but the volume does not decay exponentially.

We will also show that the spectrum of the Laplacian on $k$-forms over a noncompact manifold varies continuously with the metric (Theorem \ref{thmCont}). Our generalized Weyl criterion will allow us to relate the spectrum of the Laplacian to the spectra of $\delta d$ and $d \delta$ (Lemma \ref{lemSp}). We  show that the spectrum of these partial operators varies continuously with the metric, and then use this result together with Lemma \ref{lemSp} to prove the continuous variation of the spectrum of the full Laplacian. Theorem \ref{thmCont} generalizes the work of Dodziuk who considered the compact case \cite{dod}. Our result is also related to the spectral continuity  results due to Fukaya~\cite{Fuk3} and Cheeger-Colding~\cite{CCoIII} for the function spectrum and to those of Lott~\cites{lott, lott1}, and more recently Honda~\cite{Ho}, on the form spectrum.

For our applications, it would be convenient to consider the spectrum of any nonnegative self-adjoint operator $H$ as a complete metric space. As is well-known, the spectrum $\sigma(H)$  is a closed subset of $[0,\infty)$, hence it is a complete metric space with distance function the one induced from $\mathbb R$. Let
\[
\tilde \sigma(H)=\{-1\}\cup \sigma(H).
\]
Then $(\tilde \sigma(H),-1)$ is a pointed complete metric space. By the Gromov compactness theorem, the set of all spectra is a pre-compact set\footnote{{In fact, it is a compact set.}} under the pointed Gromov-Hausdorff distance, which we denote as $d_{GH}$.

Similar notions can be defined for the essential spectrum. We feel that the above definition is a convenient notion in many of our results, and we shall use it throughout this paper, for example in Sections \ref{S3} and \ref{S5}. However, the point $-1$ is just an abstract point, and in order to simplify notation, we shall use
$\sigma(H)$, instead of the more complicated $(\tilde \sigma(H),-1)$, for the rest of the paper. \\

{\bf Acknowledgement.} The authors would like to thank  Kenji Fukaya, John Lott,  Rafe Mazzeo, and Xiaochun Rong  for their useful comments and suggestions.

\section{A Generalized Weyl Criterion}
%
Let~$H$ be a  densely defined,  self-adjoint and nonnegative operator on a Hilbert space~$\Hilbert$.
The norm and inner product on~$\Hilbert$ are respectively
denoted by~$\|\cdot\|$ and $(\cdot,\cdot)$. Let $\Dom(H)$ denote the domain of $H$.
The classical Weyl criterion states that
\begin{thm}[Classical Weyl criterion]\label{Thm.Weyl}
A complex number $\lambda$ belongs to $\sigma(H)$ if, and only if,
there exists a sequence $\{\psi_j\}_{j \in \Nat} \subset \Dom(H)$
such that
\begin{enumerate}
\item
$
  \forall \,j\in\Nat, \quad
  \|\psi_j\|=1
$\,,
\item
$
  (H-\lambda)\psi_j \to 0, \text{ as } j\to\infty  \text{ in }\mathcal H.$
\end{enumerate}
Moreover, $\lambda$ belongs to  $\sigma_\mathrm{ess}(H)$ if, and only if,
in addition to the above properties
\begin{enumerate}
\setcounter{enumi}{2}
\item
$
  \psi_j \to  0 \text{ weakly as  } j\to\infty \text{ in }\mathcal H.
$
\end{enumerate}
\end{thm}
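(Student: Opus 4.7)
The plan is to deduce both equivalences from the spectral theorem for self-adjoint operators, which provides a resolution of the identity $\{E_t\}_{t \in \Real}$ for $H$. The overall argument splits into four separate implications; I will treat them in turn and take the final one as the main content.

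For the first equivalence, the ``if'' direction is essentially a contrapositive: if $\lambda \notin \sigma(H)$ then the resolvent $(H-\lambda)^{-1}$ is a bounded operator on $\Hilbert$, so the identity $\psi_j = (H-\lambda)^{-1}(H-\lambda)\psi_j$ forces $\psi_j \to 0$ in $\Hilbert$, contradicting $\|\psi_j\| = 1$. Conversely, if $\lambda \in \sigma(H)$ then the spectral subspace $E((\lambda - 1/j, \lambda + 1/j))\Hilbert$ is nontrivial for every $j \in \Nat$, and any unit vector $\psi_j$ chosen from it satisfies, by the functional calculus, $\|(H-\lambda)\psi_j\|^2 = \int_{\lambda-1/j}^{\lambda+1/j} (t-\lambda)^2\, d\|E_t \psi_j\|^2 \leq 1/j^2$, producing the Weyl sequence.

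For the second equivalence, the forward direction splits according to the definition of $\sigma_\mathrm{ess}(H)$. If $\lambda$ is a cluster point of $\sigma(H)$, I pick distinct spectral values $\lambda_j \to \lambda$ together with pairwise disjoint neighborhoods $I_j \ni \lambda_j$; each spectral subspace $E(I_j)\Hilbert$ is then nonzero, and a choice of unit vector $\psi_j \in E(I_j)\Hilbert$ yields mutually orthogonal vectors (hence weakly convergent to zero) with $\|(H-\lambda)\psi_j\| \to 0$ by the same estimate. If instead $\lambda$ is an isolated eigenvalue of infinite multiplicity, any orthonormal basis of the eigenspace is already a Weyl sequence converging weakly to zero.

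The reverse direction is the most delicate step and is where I expect the main work to lie. Suppose a Weyl sequence exists with $\psi_j \to 0$ weakly, and assume for contradiction that $\lambda \notin \sigma_\mathrm{ess}(H)$. Since $\lambda \in \sigma(H)$ is forced by the existence of the Weyl sequence, $\lambda$ must be an isolated eigenvalue of finite multiplicity. Let $P$ denote the orthogonal projection onto the finite-dimensional eigenspace $V$, and decompose $\psi_j = P\psi_j + (I-P)\psi_j$. Because $V$ is finite dimensional, weak convergence of $\psi_j$ to $0$ implies $\|P\psi_j\| \to 0$. On the complementary spectral subspace there is an isolation gap $\delta > 0$ with $|t - \lambda| \geq \delta$ on the support of the spectral measure of $(I-P)\psi_j$, so the functional calculus gives $\|(H-\lambda)(I-P)\psi_j\| \geq \delta \|(I-P)\psi_j\|$. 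Combined with $(H-\lambda)P\psi_j = 0$ and $(H-\lambda)\psi_j \to 0$, this forces $\|(I-P)\psi_j\| \to 0$, hence $\|\psi_j\| \to 0$, contradicting $\|\psi_j\| = 1$. The main obstacle is arranging the spectral projections carefully enough that the orthogonality and the spectral-gap estimate both apply cleanly.
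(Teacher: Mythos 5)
Your proof is correct. The paper states this classical criterion without proof, so there is no in-paper argument to compare against; your route --- nontriviality of the spectral projections $E((\lambda-1/j,\lambda+1/j))$ for points of the spectrum, mutually orthogonal unit vectors in disjoint spectral subspaces for cluster points, and the gap estimate $\|(H-\lambda)(I-P)\psi_j\|\geq\delta\|(I-P)\psi_j\|$ away from an isolated finite-multiplicity eigenvalue --- is the standard one, and it is exactly the spectral-measure machinery the paper itself deploys when proving its generalized criterion (Theorem \ref{Thm.Weyl.bis-4}), where the orthogonality step in the essential-spectrum case is delegated to Donnelly's Proposition 2.2. The only point worth tightening is that in the cluster-point case you should choose the disjoint neighborhoods $I_j$ with diameters tending to zero so that $\sup_{t\in I_j}|t-\lambda|\to 0$, which is what makes ``the same estimate'' give $\|(H-\lambda)\psi_j\|\to 0$.
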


In this  section we  prove a similar  but  both qualitatively and quantitatively stronger criterion for the spectrum and the essential spectrum of $H$.  Let $f, g$ be two  bounded positive continuous functions on $[0,\infty)$, with $g$ satisfying the following additional property: for any $\lambda\geq 0$,  there exists a positive  constant $c$ such that $g(t)(t-\lambda)\geq c>0$ on the interval $[\lambda+1,\infty)$.

We define the following three constants: for a fixed $\lambda\geq 0$ we set
\begin{align}\label{constants}
\begin{split}
& c_1(\lambda)=\inf_{t\in[0,\lambda]} f(t);\\
& c_2(\lambda)=\min\left(\inf_{t\in[\lambda,\lambda+1]} g(t), \inf_{t\in[\lambda+1,\infty)} g(t) (t-\lambda)\right);\\[0.5em]
& c_3(\lambda)=\lambda\,\sup_{t\in[0,\lambda]} g(t).
\end{split}
\end{align}

Moreover, let
\[
c_0=\max(\sup f(t), \sup g(t)).
\]

\begin{thm}\label{Thm.Weyl.bis-4}
Let $H$ be defined as above. We fix a nonnegative  number  $\lambda\geq 0$, and a small positive number $0<\delta<c_0$. If
\[
{\rm dist}(\lambda,\sigma(H))<\delta/c_0,
\]
then there exists a sequence $\{\psi_j\}_{j \in \Nat} \subset  \Dom(H)$ such that\footnote{the $\psi_j$ could all be identical.}
\begin{enumerate}
\item
$
  \forall \, j\in\Nat, \quad
  \|\psi_j\|=1
$\,,
\item
$
 |(f(H) (H-\lambda)\psi_j, (H-\lambda)\psi_j)|\leq \delta,  \quad {and}
$
\item
$
|(g(H)\psi_j, (H-\lambda)\psi_j)|\leq \delta.
$

\end{enumerate}
Whenever
\[
{\rm dist}(\lambda,\sigma_\mathrm{ess}(H))<\delta/c_0,
\]
 then in addition to the above properties, we have
\begin{enumerate}
\setcounter{enumi}{3}
\item
$
  \psi_j \to 0, \text{ weakly as } j\to\infty
$
\text{ in } $\mathcal H$.

\end{enumerate}

On the other hand, if properties (1),(2),(3) are satisfied for a sequence of $\{\psi_j\}_{j \in \Nat} \subset \Dom(H)$, then
\[
{\rm dist}(\lambda,\sigma(H))\leq \left(\frac{c_1(\lambda)+c_2(\lambda)+c_3(\lambda)}{c_1(\lambda)\,c_2(\lambda)}\right)^{\frac 13}\cdot\delta^{\frac 13}
\]
in the case $\lambda>0$, and
\[
{\rm dist}(\lambda,\sigma(H))\leq  \frac{\delta}{ c_2(\lambda)}
\]
in the case $\lambda=0$.

If the $\{\psi_j\}_{j \in \Nat}$ also satisfy property (4), then the above upper bound also holds for ${\rm dist}(\lambda,\sigma_\mathrm{ess}(H))$.
\end{thm}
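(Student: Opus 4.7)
My plan is to base everything on the spectral theorem. For any state $\psi$, write $\mu_\psi(A)=\|E(A)\psi\|^2$ for its spectral measure, so the bilinear forms in (2) and (3) become $\int f(t)(t-\lambda)^2\,d\mu_\psi(t)$ and $\int g(t)(t-\lambda)\,d\mu_\psi(t)$. The positivity and growth conditions imposed on $f$ and $g$ are designed precisely so that these integrals measure how far from $\lambda$ the support of $\mu_\psi$ can sit.

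For the forward direction, I would select $\psi$ to be a unit vector in the range of $E((\lambda-\delta/c_0,\lambda+\delta/c_0))$, which is nonzero by the hypothesis ${\rm dist}(\lambda,\sigma(H))<\delta/c_0$. On this interval $|t-\lambda|<\delta/c_0$ and $f,g\leq c_0$, so the two integrals are bounded respectively by $\delta^2/c_0\leq\delta$ and $\delta$, verifying (2) and (3). For the essential-spectrum version the same spectral projection has infinite rank---any open interval meeting $\sigma_{\rm ess}(H)$ does, either because of the infinite multiplicity of an isolated eigenvalue or because spectrum accumulates---so an orthonormal sequence chosen in its range satisfies (1)--(3) and automatically converges weakly to zero.

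The reverse direction is the main technical step. Given $\{\psi_j\}$ satisfying (1)--(3), let $r={\rm dist}(\lambda,\sigma(H))$; I may assume $r\leq 1$, since otherwise the middle interval is absent and the remaining two-piece analysis forces a contradiction for small $\delta$. The probability measure $\mu_j=\mu_{\psi_j}$ is then supported outside $(\lambda-r,\lambda+r)$, and I partition its mass as $p_j,q_j,s_j$ on the three intervals $[0,\lambda-r]$, $[\lambda+r,\lambda+1]$, and $[\lambda+1,\infty)$, with $p_j+q_j+s_j=1$. Property (2) combined with $f\geq c_1(\lambda)$ on $[0,\lambda]$ yields $c_1(\lambda)r^2p_j\leq\delta$. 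Property (3), together with the lower bounds $g(t)(t-\lambda)\geq c_2(\lambda)r$ on $[\lambda+r,\lambda+1]$ and $g(t)(t-\lambda)\geq c_2(\lambda)$ on $[\lambda+1,\infty)$, and the upper bound $|g(t)(t-\lambda)|\leq c_3(\lambda)$ on $[0,\lambda]$, yields $c_2(\lambda)(rq_j+s_j)\leq\delta+c_3(\lambda)p_j$. Substituting these into $p_j+q_j+s_j=1$ and clearing denominators produces the cubic inequality $c_1c_2r^3\leq(c_1+c_2+c_3)\delta$, which rearranges to the stated bound. The case $\lambda=0$ collapses $[0,\lambda]$ to a point and gives the linear bound $r\leq\delta/c_2(0)$ directly.

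For the essential-spectrum version of the reverse direction, fix any $r'<r={\rm dist}(\lambda,\sigma_{\rm ess}(H))$. Inside the open interval $(\lambda-r',\lambda+r')$ the spectrum consists of only finitely many eigenvalues of finite multiplicity, since any accumulation would create a cluster point, hence an essential-spectrum point, in this interval. Therefore $P'=E((\lambda-r',\lambda+r'))$ is finite rank, and weak convergence gives $\|P'\psi_j\|\to 0$. I then replace $\psi_j$ by $\phi_j=(I-P')\psi_j/\|(I-P')\psi_j\|$, which for $j$ large has spectral measure supported outside $(\lambda-r',\lambda+r')$ and satisfies (2), (3) with $\delta$ inflated by the factor $\|(I-P')\psi_j\|^{-2}\to 1$. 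Applying the reverse-direction bound to $\phi_j$ and then letting $j\to\infty$, $r'\nearrow r$, recovers the same estimate. The hardest step is the three-way splitting; the clean cubic form emerges only when one correctly propagates the $p_j$-estimate through the $q_j+s_j$-estimate and uses $r\leq 1$ to collapse the cross terms.
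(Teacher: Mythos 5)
Your proposal is correct and follows essentially the same route as the paper: the forward direction via the bound $c_0$ on $f,g$ applied to a (near-)eigenvector's spectral measure, and the reverse direction via splitting the spectral measure of $\psi_j$ into the parts below $\lambda-r$ and above $\lambda+r$ and combining the two resulting inequalities into the cubic bound (your three-interval partition $p_j,q_j,s_j$ is just the paper's decomposition $\psi_j=\psi_j^1+\psi_j^2$ with the $[\lambda+r,\lambda+1]$ versus $[\lambda+1,\infty)$ distinction made explicit, and your reduction to $r\leq 1$ matches the paper's restriction $\eps<\min(\lambda,1)$). Your treatment of the essential-spectrum reverse direction, via the finite-rank projection $E((\lambda-r',\lambda+r'))$ and renormalization of $(I-P')\psi_j$, supplies a detail the paper only asserts.
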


\begin{proof}  By the assumptions on $H$,
\begin{equation}\label{decomp}
H=\int_0^\infty t\, dE(t)
\end{equation}
for some spectral measure $E$. Assume that ${\rm dist}(\lambda,\sigma(H))<\delta/c_0$. Then there exists a sequence $\{\psi_j\}$ such that
\[
\|(H-\lambda)\psi_j\|<\delta/c_0, \quad \|\psi_j\|=1.
\]
Then we have
\[
(f(H) (H-\lambda)\psi_j, (H-\lambda)\psi_j)=\int_0^\infty f(t)(t-\lambda)^2 d\|E(t)\psi_j\|^2.
\]
As a result,
\[
|(f(H) (H-\lambda)\psi_j, (H-\lambda)\psi_j) |\leq c_0\int_0^\infty (t-\lambda)^2 d\|E(t)\psi_j\|^2=c_0\|(H-\lambda)\psi_j\|^2\leq\delta,
\]
and similarly,
\[
|(\,g(H) \psi_j, (H-\lambda)\psi_j)|\leq c_0\,\|\psi_j\|\cdot\|(H-\lambda)\psi_j\|\leq\delta.
\]

If in addition,
\[
{\rm dist}(\lambda,\sigma_\mathrm{ess}(H))<\delta/c_0,
\]
then we may assume that the sequence $~\{\psi_j\}$ is orthogonal (cf. \cite{Don81}*{Prop. 2.2}).  Since the $\psi_j$ are of unit norm and orthogonal, they are weakly convergent to $0$.

To prove the reverse statement of the theorem, we  first consider the case $\lambda>0$ and assume that ${\rm dist}(\lambda,\sigma(H))>\eps$  for some $\eps<\min(\lambda,1)$.  Let $P=E([0,\lambda-\eps])$. $P$ is a projection operator which we use to write
\[
\psi_j=\psi_j^1+\psi_j^2,
\]
where
\[
\psi_j^1=\int_0^{\lambda-\eps} dE(t)\psi_j=P \psi_j,
\]
and $\psi_j^2=\psi_j-\psi_j^1$. Then we have
\begin{align*}&
(f(H) (H-\lambda)\psi_j, (H-\lambda)\psi_j) \\&=(f(H) (H-\lambda)\psi_j^1, (H-\lambda)\psi_j^1)
+(f(H) (H-\lambda)\psi^2_j, (H-\lambda)\psi_j^2)\\
& \geq c_1(\lambda)\eps^2\|\psi_j^1\|^2+(f(H) (H-\lambda)\psi^2_j, (H-\lambda)\psi_j^2)\geq c_1(\lambda)\eps^2\|\psi_j^1\|^2,
\end{align*}
On the other hand, we similarly get
\[
(g(H)\psi_j, (H-\lambda)\psi_j)\geq c_2(\lambda)\eps\|\psi_j^2\|^2-c_3(\lambda)\|\psi_j^1\|^2.
\]
 If the criteria {\it (2)}, {\it (3)} are satisfied, then, by the two inequalities above, we conclude
 \begin{align*}
 & \delta\geq  c_1(\lambda)\eps^2\, x,\\
 &\delta\geq c_2(\lambda)\eps(1-x)-c_3(\lambda) x,
 \end{align*}
 where $x=\|\psi_j^1\|^2$. Thus we must have
 \[
 \delta\geq\frac{c_1(\lambda)\, c_2(\lambda)\,\eps^3}{c_1(\lambda)\eps^2+c_2(\lambda)\eps+c_3(\lambda)}\geq \frac{c_1(\lambda)\, c_2(\lambda)\,\eps^3}{c_1(\lambda)+c_2(\lambda)+c_3(\lambda)}.
 \]
 This proves the upper bound estimate of $\eps$. Moreover, if (4) is satisfied, then the estimate holds for  $\sigma_\mathrm{ess}(H)$.

In the case $\lambda=0$ we have that $c_3(\lambda)=0$ and $x=0$ in the above argument, as a result we get the estimate $\eps<\delta/c_2(\lambda)$.

\end{proof}

We shall take
$f(t)=(t+\alpha)^{-2}$ and $g(t)=(t+\alpha)^{-1}$ for some positive $\alpha$. Observe that
\begin{equation*}
\begin{split}
&((H+\alpha)^{-2} (H-\lambda)\psi_j, (H-\lambda)\psi_j) \\
&=((H+\alpha)^{-1}\psi_j, (H-\lambda)\psi_j)-(\alpha+\lambda)\;((H+\alpha)^{-2}\psi_j, (H-\lambda)\psi_j).
\end{split}
\end{equation*}
Then we have the following

\begin{corl}\label{cor21} A nonnegative real number  $\lambda$ belongs to the spectrum $\sigma(H)$ if, and only if, there exists a positive constant $\alpha$ and a sequence $\{\psi_j\}_{j \in \Nat} \subset \Dom(H)$ such that

\begin{enumerate}
\item
$
\forall j\in\Nat, \quad  \|\psi_j\|=1
$\,,

\item
$
((H+\alpha)^{-m}\psi_j, (H-\lambda)\psi_j)\to 0 \
$
for $m=1,2$.
\end{enumerate}

Moreover, $\lambda$ belongs to the essential spectrum $\sigma_\mathrm{ess}(H)$  if, and only if,
in addition to the above properties

\begin{enumerate}
\setcounter{enumi}{2}
\item $  \psi_j \to 0, \text{ weakly as } j\to\infty$\text{ in } $\mathcal H$.
\end{enumerate}
Furthermore, if for some $0<\delta<1$,
\[
|((H+\alpha)^{-m}\psi_j, (H-\lambda)\psi_j)|\leq\delta
\]
for both $m=1,2$ and all $j$, then there exists a constant $c(\lambda,\alpha)>0$, depending only on $\lambda,\alpha$, such that
\[
{\rm dist}(\lambda,\sigma(H))<c(\lambda,\alpha)\,\delta^{\frac 13}.
\]
\end{corl}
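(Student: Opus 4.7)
The strategy is to specialize Theorem \ref{Thm.Weyl.bis-4} to the rational pair $f(t) = (t+\alpha)^{-2}$ and $g(t) = (t+\alpha)^{-1}$ for a fixed $\alpha > 0$. The first step is to verify the hypotheses on $f,g$: both are bounded, positive and continuous on $[0,\infty)$ since $\alpha > 0$, and $g(t)(t-\lambda) = (t-\lambda)/(t+\alpha)$ has derivative $(\alpha+\lambda)/(t+\alpha)^2 \geq 0$, hence is monotonically increasing in $t$ and bounded below on $[\lambda+1,\infty)$ by $1/(\lambda+1+\alpha) > 0$, as required.

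For the forward direction, if $\lambda \in \sigma(H)$ then Theorem \ref{Thm.Weyl.bis-4} produces, for any prescribed $\delta > 0$, a sequence $\{\psi_j\}$ satisfying its conditions (1)--(3). Condition (3) of that theorem is exactly the corollary's condition (2) in the case $m = 1$. To extract the $m = 2$ case, rearrange the algebraic identity displayed immediately before the corollary as
\[
((H+\alpha)^{-2}\psi_j, (H-\lambda)\psi_j) = \frac{1}{\alpha+\lambda}\bigl[((H+\alpha)^{-1}\psi_j, (H-\lambda)\psi_j) - ((H+\alpha)^{-2}(H-\lambda)\psi_j, (H-\lambda)\psi_j)\bigr],
\]
which is valid because $\alpha + \lambda > 0$; the two bracketed terms are bounded by $\delta$ in absolute value (from Theorem \ref{Thm.Weyl.bis-4} (3) and (2) respectively), so the left-hand side tends to $0$. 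The essential-spectrum characterization follows verbatim from the weak-convergence clause of Theorem \ref{Thm.Weyl.bis-4}.

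For the converse, assume the corollary's (1) and (2) hold with bound $\delta$. The same identity, read in the opposite direction, yields
\[
|((H+\alpha)^{-2}(H-\lambda)\psi_j, (H-\lambda)\psi_j)| \leq (1+\alpha+\lambda)\,\delta,
\]
which together with the trivial bound $\delta$ on the $m = 1$ quantity supplies hypotheses (2) and (3) of Theorem \ref{Thm.Weyl.bis-4} with $\delta$ replaced by $(1+\alpha+\lambda)\delta$. An explicit evaluation of the constants in \eqref{constants} gives $c_1(\lambda) = (\lambda+\alpha)^{-2}$, $c_2(\lambda) = (\lambda+1+\alpha)^{-1}$, and $c_3(\lambda) = \lambda/\alpha$, all explicit positive quantities in $\lambda$ and $\alpha$. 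The reverse statement of Theorem \ref{Thm.Weyl.bis-4} then produces the cube-root bound, and absorbing the factor $(1+\alpha+\lambda)^{1/3}$ into $((c_1+c_2+c_3)/(c_1 c_2))^{1/3}$ yields the constant $c(\lambda,\alpha)$. In the degenerate case $\lambda = 0$, where $c_3(\lambda) = 0$, the sharper linear bound $\delta/c_2(\lambda)$ of Theorem \ref{Thm.Weyl.bis-4} applies, which is certainly dominated by a $c(0,\alpha)\,\delta^{1/3}$ estimate on the range $0 < \delta < 1$.

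No substantive obstacle is expected: because our choice satisfies $f = g^2$, the pair of conditions (2) and (3) of Theorem \ref{Thm.Weyl.bis-4} is algebraically equivalent, modulo controllable $(\alpha+\lambda)$-factors, to the pair of corollary conditions indexed by $m = 1, 2$. The only point requiring any care is the nonvanishing of $\alpha + \lambda$, which is guaranteed by the hypothesis $\alpha > 0$.
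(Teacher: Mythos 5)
Your proposal is correct and follows exactly the route the paper intends: specializing Theorem \ref{Thm.Weyl.bis-4} to $f(t)=(t+\alpha)^{-2}$, $g(t)=(t+\alpha)^{-1}$ and using the resolvent identity displayed just before the corollary to pass between the theorem's conditions and the $m=1,2$ conditions of the corollary. Your explicit verification of the hypotheses on $g$, the evaluation of $c_1,c_2,c_3$, and the handling of the $\lambda=0$ case supply precisely the details the paper leaves to the reader.
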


\vspace{.2in}

\section{Continuous Deformation of the Spectrum} \label{S3}

Let $\mathcal{H}$ be a Hilbert space with two inner products $(\cdot,\cdot)_0$ and $(\cdot,\cdot)_1$. Let $H_0, H_1$ be two  densely defined nonnegative operators on $\mathcal{H}$ that are self-adjoint with respect to the inner products $(\cdot,\cdot)_0$ and $(\cdot,\cdot)_1$ respectively. Let $Q_0, Q_1$ be their respective quadratic forms and denote the two norms on $\mathcal{H}$ by $\|\cdot\|_0$ and $\|\cdot\|_1$. Note that both $Q_0$ and $Q_1$ are  nonnegative.

We assume that there exists a dense subspace\footnote{$\mathcal C$ is the space of smooth forms  with  compact support in our applications.}  $\mathcal C\subset \mathcal H$ such that $\mathcal C$ is contained in $ \mathfrak {Dom} (H_0)\cap  \mathfrak {Dom} (H_1)$.

\begin{Def}\label{defBd}
We say that the operators $H_0, H_1$ are $\eps$-close, if there exists a positive constant $0<\eps<1$  such that  for all $u\in \mathcal C$ the following two inequalities hold
\begin{align}
(1-\eps)\,\|u\|_0^2 &\leq \|u\|_1^2  \leq (1+\eps)\,\|u\|_0^2;  \label{A1}\\
(1-\eps)\,Q_0(u,u) & \leq Q_1(u,u) \leq (1+\eps)\,Q_0(u,u) \label{A2}.
\end{align}
\end{Def}

If $H_0,H_1$ are $\eps$-close, then for any $u,v \in  \mathcal C$
\begin{align}
|(u,v)_1-(u,v)_0| & \leq \eps (\|u\|_0\, \|v\|_0); \label{Bd1}\\
|Q_1(u,v) - Q_0(u,v)| & \leq \eps \, [Q_0(u,u) \,  Q_0(v,v)]^{1/2}. \label{Bd2}
\end{align}

To prove \eref{Bd1} we first observe that if either $u=0$ or $v=0$, then clearly the inequality holds. So we will prove it when neither vanishes. We first observe that
\begin{equation*}
\begin{split}
|(u,v)_1-(u,v)_0| &= \frac 14 \; \bigl| \, [\|u+v\|_1^2 - \|u+v\|_0^2 - (\|u-v\|_1^2 - \|u-v\|_0^2)]\,\bigr| \\
& \leq \frac 14 \,\eps \, [ \|u+v\|_0^2 + \|u-v\|_0^2] \leq \frac 12 \, \eps\, [ \|u\|_0^2 + \|v\|_0^2].
\end{split}
\end{equation*}
If in the above inequality we replace $u$ by $a u$ and $v$ by $v/a$ with $a=\|v\|_0/\|u\|_0$, then \eref{Bd1} follows immediately. Inequality \eref{Bd2} follows in a similar manner given that $Q_0$ and $Q_1$ are nonnegative.

We will show that the spectra of two $\eps$-close operators  also remain close. Our proof will require a comparison of their resolvent operators which is Lemma \ref{resBd} below.  We first prove the boundedness of the resolvents on $\mathcal H$.
\begin{lem} \label{lem1b}  Let $H$ be defined as above. Then for any nonnegative integer $m$ and $\alpha>0$,  $(H+\alpha)^{-m}$ is a bounded operator on $\mathcal H$.
\end{lem}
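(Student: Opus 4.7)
The plan is to invoke the spectral theorem for the nonnegative self-adjoint operator $H$, as already done in \eqref{decomp}, and then apply the bounded Borel functional calculus to a function that is manifestly bounded on the spectrum of $H$.

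First, since $H$ is nonnegative and self-adjoint, the spectral measure $E$ is supported in $[0,\infty)$, so that
\[
H = \int_0^\infty t\, dE(t).
\]
For any $\alpha>0$ and nonnegative integer $m$, consider the Borel function $\varphi(t)=(t+\alpha)^{-m}$ on $[0,\infty)$. Since $t\geq 0$ and $\alpha>0$, this function satisfies the pointwise bound $0<\varphi(t)\leq \alpha^{-m}$ on $[0,\infty)$, so in particular $\varphi$ is bounded on the support of $E$.

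Next, by the Borel functional calculus applied to $H$, the operator
\[
(H+\alpha)^{-m} \;=\; \int_0^\infty (t+\alpha)^{-m}\, dE(t)
\]
is well defined on all of $\mathcal H$, and its operator norm is controlled by the supremum of $|\varphi|$ on the spectrum of $H$, namely
\[
\|(H+\alpha)^{-m}\|_{\mathcal H\to\mathcal H} \;\leq\; \sup_{t\in\sigma(H)}(t+\alpha)^{-m} \;\leq\; \alpha^{-m}.
\]
Thus $(H+\alpha)^{-m}$ is a bounded operator on $\mathcal H$, as claimed.

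There is no real obstacle in this step: the only thing one must be careful about is that a priori $(H+\alpha)^{-m}$ is defined through the functional calculus rather than as a naive composition of unbounded inverses, and this is precisely what makes the estimate available. The result is essentially the observation that $\alpha>0$ lies in the resolvent set of $-H$ with quantitative distance $\alpha$ from $\sigma(H)\subset[0,\infty)$, so iterating $m$ times yields the $\alpha^{-m}$ bound.
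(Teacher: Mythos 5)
Your proof is correct and is essentially identical to the paper's argument: both use the spectral decomposition $(H+\alpha)^{-m}=\int_0^\infty (t+\alpha)^{-m}\,dE(t)$ together with the pointwise bound $(t+\alpha)^{-m}\leq \alpha^{-m}$ on $[0,\infty)$ to conclude the operator norm is at most $\alpha^{-m}$. No further comment is needed.
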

\begin{proof}
By the spectral decomposition~\eqref{decomp} of $H$, we can write
\[
(H+\alpha)^{-m}=\int_0^\infty (\lambda+\alpha)^{-m} dE.
\]
Since $(\lambda+\alpha)^{-m}\leq \alpha^{-m}$, the operator is bounded,  and in fact its operator norm is at most $\alpha^{-m}$.
\end{proof}
\begin{lem} \label{resBd}
Let $H_0$ and $H_1$ be two self-adjoint nonnegative operators that are $\eps$-close on $\mathcal{H}$ as in Definition \eref{defBd}, with $0<\eps<1/2$.

Fix $\alpha>1$. Then  for all $u,v \in \mathcal  C$
\begin{equation*}
\bigl|(\, (H_1+\alpha)^{-m}u,v\,)_1 - (\, (H_0+\alpha)^{-m}u,v\,)_0 \bigr| \leq    (2m+1)\,\eps\, \|u\|_0 \|v\|_0
\end{equation*}
for any nonnegative integer $m\geq 0$.
\end{lem}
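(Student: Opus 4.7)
The proof will proceed by induction on $m \geq 0$. The base case $m = 0$ is immediate: since $(H_i+\alpha)^0 = I$, the claim reduces to \eqref{Bd1} with the sharp constant $(2\cdot 0 + 1)\eps = \eps$.

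For the inductive step, write $T_i := (H_i+\alpha)^{-1}$, so that Lemma \ref{lem1b} gives $\|T_i^k u\|_i \leq \alpha^{-k}\|u\|_i$, and use the self-adjointness identity $(T_i^{m+1} u, v)_i = (T_i^m u, T_i v)_i$. Inserting the intermediate quantity $(T_1^m u, T_0 v)_1$, I would decompose
\[
(T_1^{m+1} u, v)_1 - (T_0^{m+1} u, v)_0 = (T_1^m u,\, T_1 v - T_0 v)_1 + \bigl[(T_1^m u, T_0 v)_1 - (T_0^m u, T_0 v)_0\bigr].
\]
The second bracket is exactly the inductive hypothesis applied with $v$ replaced by $T_0 v$, so it is bounded by $(2m+1)\eps\,\|u\|_0\,\|T_0 v\|_0 \leq (2m+1)\eps\,\|u\|_0\,\|v\|_0$, using $\|T_0 v\|_0 \leq \|v\|_0/\alpha \leq \|v\|_0$ together with the hypothesis $\alpha > 1$.

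The technical heart of the argument is thus to bound the first term by $2\eps\,\|u\|_0\,\|v\|_0$. This reduces to a one-step resolvent estimate of the schematic form $\|T_1 v - T_0 v\|_1 \leq 2\eps\,\|v\|_0/\alpha$, which I would establish by working with the bilinear forms $B_i(w,\phi) := Q_i(w,\phi) + \alpha(w,\phi)_i$. These forms satisfy the defining identity $B_i(T_i v, \phi) = (v, \phi)_i$ for every $\phi$ in the common form domain (the closure of $\mathcal C$, which coincides under either form norm thanks to \eqref{A2}). Subtracting the two identities produces
\[
B_1(T_1 v - T_0 v, \phi) = \bigl[(v,\phi)_1 - (v,\phi)_0\bigr] - \bigl[B_1 - B_0\bigr](T_0 v, \phi),
\]
and testing with $\phi = T_1 v - T_0 v$, combined with \eqref{Bd1}, \eqref{Bd2} and the coercivity $B_1(\psi,\psi) \geq \alpha(1-\eps)\|\psi\|_0^2$, yields the desired bound after a short algebraic manipulation.

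Combining these two ingredients, and converting back to the $\|\cdot\|_0$ norm via $\|T_1^m u\|_1 \leq \alpha^{-m}(1+\eps)^{1/2}\|u\|_0$, produces a recursive estimate of the schematic form $A_{m+1} \leq A_m + 2\eps$, where $A_m$ denotes the best constant for which the lemma holds at level $m$. This gives $A_m \leq (2m+1)\eps$ as required. The main obstacle I anticipate is the careful bookkeeping of the constants $\sqrt{(1+\eps)/(1-\eps)}$ and $\alpha^{-k}$ that appear whenever one switches between the two norms or peels off a factor of $T_i$, so that the clean coefficient $(2m+1)$ emerges rather than a messy expression; the hypotheses $\eps < 1/2$ and $\alpha > 1$ should suffice to absorb all such factors into the leading $2\eps$ at each step of the induction.
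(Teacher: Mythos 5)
Your inductive skeleton --- peel off one resolvent by self-adjointness and telescope --- is the same as the paper's, and your treatment of the second bracket (the inductive hypothesis applied with $v$ replaced by $T_0v$, using $\|T_0v\|_0\le\alpha^{-1}\|v\|_0$) matches the paper's step; both arguments tacitly extend \eqref{Bd1}, \eqref{Bd2} and the induction hypothesis from $\mathcal C$ to the relevant closures, since $T_0v$ need not lie in $\mathcal C$, and that is harmless. Where you genuinely diverge is the one-step comparison. The paper never estimates $\|T_1v-T_0v\|$ at all: with $w_1=(H_0+\alpha)^{-m}u$ and $v_1=(H_1+\alpha)^{-1}v$ it compares $((H_0+\alpha)w_1,v_1)_0=Q_0(w_1,v_1)+\alpha(w_1,v_1)_0$ with $(w_1,(H_1+\alpha)v_1)_1=Q_1(w_1,v_1)+\alpha(w_1,v_1)_1$, so the difference is controlled directly by \eqref{Bd1}--\eqref{Bd2} applied to vectors whose $Q_0$-energies and norms are already dominated by $\|u\|_0$ and $\|v\|_0$ (via $Q_0(w_1,w_1)\le((H_0+\alpha)w_1,w_1)_0$, etc.); no coercivity constant is ever inverted. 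You instead prove a quantitative resolvent-difference bound by a Lax--Milgram-type coercivity argument for $B_1$. That is a legitimate and in some ways more transparent route (it is the form version of the second resolvent identity), but it is not the paper's.

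The gap is quantitative and sits exactly at the point you defer. Testing your identity with $\phi=\psi:=T_1v-T_0v$ forces you to divide by the coercivity constant $\alpha(1-\eps)$; the term $|Q_1(T_0v,\psi)-Q_0(T_0v,\psi)|\le\eps\,Q_0(T_0v,T_0v)^{1/2}Q_0(\psi,\psi)^{1/2}$ must be reabsorbed into the left-hand side at a further cost; and converting to $\|\cdot\|_1$ costs $(1+\eps)^{1/2}$. Carried out carefully this gives roughly $\|T_1v-T_0v\|_1\le 3\eps(1+\eps)^{1/2}\|v\|_0/(\alpha(1-\eps))$, which for $\eps$ near $1/2$ is about $7\eps\|v\|_0/\alpha$, not $2\eps\|v\|_0/\alpha$. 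The hypotheses $\eps<1/2$ and $\alpha>1$ keep all these factors bounded, but they do not make them $\le 1$, so they are not ``absorbed into the leading $2\eps$''; your recursion is $A_{m+1}\le A_m+C\eps$ for a universal $C$ distinctly larger than $2$, and you obtain $(Cm+1)\eps$ rather than $(2m+1)\eps$. For everything the lemma is used for (Theorem \ref{PropBd} needs only some constant linear in $m$) this is sufficient, and the paper's own constant-tracking is itself not airtight (it silently drops the $\alpha$-weighted inner-product difference and uses bounds on $Q_0(v_1,v_1)$ that strictly need $\alpha\ge(1+\eps)/(1-\eps)$); but as a proof of the inequality with the stated coefficient $(2m+1)$, the assertion that the stray factors can be absorbed is precisely where your argument fails.
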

\begin{proof} For $m=0$, the result follows from ~\eqref{Bd1}.

Let $m>0$ and assume that the lemma holds for $m-1$. Then \begin{align*}
&|((H_1+\alpha)^{-m+1} u, (H_1+\alpha)^{-1}v)_1-
((H_0+\alpha)^{-m+1} u, (H_1+\alpha)^{-1}v)_0|\\
&\qquad\leq (2m-1)\,\eps
\| u\|_0 \| v\|_0.
\end{align*}
Let $w=(H_0+\alpha)^{-m+1} u$, $w_1=(H_0+\alpha)^{-1}w$, and $v_1=(H_1+\alpha)^{-1} v$. Then
\begin{align*}
&|(w,v_1)_0- (w_1,v)_0|\\
& \leq |((H_0+\alpha)w_1,v_1)_0-(w_1, (H_1+\alpha)v_1)_1|
+|(w_1,v)_0-(w_1,v)_1|\\
&
\leq \eps\, ((Q_0(w_1,w_1)\cdot Q_0(v_1,v_1))^{1/2}+\|w_1\|_0\cdot\|v\|_0)\leq 2\eps \|u\|_0\cdot\|v\|_0.
\end{align*}
The result follows for $m$ after combining the above two estimates.
\end{proof}

We will now describe the proximity of the spectra of two $\eps$-close operators.

\begin{thm} \label{PropBd}
Let $H_0, H_1$ be two nonnegative operators on $\mathcal{H}$ that are $\eps$-close as in Definition \ref{defBd} for some $0<\eps<1/2$.
Fix $A>0$. Then  for any  $\lambda \in \sigma(H_1)\cap [0,A]$
\[
{\rm dist}(\lambda,\sigma(H_0))< c(A) \eps^{\frac 13}
\]
for a constant $c(A)$ depending only on $A$. In particular, we have
\[
d_{GH}(\sigma(H_0), \sigma(H_1))= o(1),
\]
where $o(1) \to 0$
as $\eps\to 0$.
\end{thm}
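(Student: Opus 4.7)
The plan is to use Corollary \ref{cor21} as a two-way bridge between the spectra. Fix $\alpha=1$ once and for all. Given $\lambda\in\sigma(H_1)\cap[0,A]$, the forward direction of Corollary \ref{cor21} applied in the $(\cdot,\cdot)_1$ structure yields $\{\psi_j\}\subset\Dom(H_1)$ with $\|\psi_j\|_1=1$ and
\[
\delta_j:=\max_{m=1,2}\bigl|\bigl((H_1+\alpha)^{-m}\psi_j,(H_1-\lambda)\psi_j\bigr)_1\bigr|\longrightarrow 0.
\]
By \eref{A1} with $\eps<1/2$, $\|\psi_j\|_0^2\in[2/3,2]$, so the renormalization $\tilde\psi_j:=\psi_j/\|\psi_j\|_0$ satisfies $\|\tilde\psi_j\|_0=1$.

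To transplant this quasi-mode to $H_0$, I would use the identity
\[
((H+\alpha)^{-m}\psi,(H-\lambda)\psi)=((H+\alpha)^{-(m-1)}\psi,\psi)-(\lambda+\alpha)((H+\alpha)^{-m}\psi,\psi),
\]
valid for $H=H_0,H_1$ by the spectral functional calculus, which rewrites both Weyl-type quantities as linear combinations of resolvent matrix elements $((H+\alpha)^{-k}\psi,\psi)$ for $k=0,1,2$. Each resolvent $(H_i+\alpha)^{-k}$ is bounded by Lemma \ref{lem1b}, so Lemma \ref{resBd} extends by continuity from $\mathcal C$ to all of $\mathcal H$ and bounds each such matrix-element difference by $(2k+1)\,\eps\,\|\psi_j\|_0^2$; combined with \eref{Bd1} for the $k=0$ term, the difference of the two Weyl quantities is then at most $C(A)\,\eps\,\|\psi_j\|_0^2$, where $C(A)$ comes from the estimate $\lambda+\alpha\le A+1$. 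Dividing by $\|\psi_j\|_0^2\ge 2/3$ and choosing $j$ so large that $\delta_j\le\eps$ gives
\[
\max_{m=1,2}\bigl|\bigl((H_0+\alpha)^{-m}\tilde\psi_j,(H_0-\lambda)\tilde\psi_j\bigr)_0\bigr|\le C_1(A)\,\eps.
\]

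The reverse direction of Corollary \ref{cor21} applied to $H_0$ then yields ${\rm dist}(\lambda,\sigma(H_0))<c(\lambda,\alpha)(C_1(A)\eps)^{1/3}$. With the specific choices $f(t)=(t+\alpha)^{-2}$ and $g(t)=(t+\alpha)^{-1}$, the constants \eref{constants} evaluate to $c_1(\lambda)=(\lambda+\alpha)^{-2}$, $c_2(\lambda)=(1+\lambda+\alpha)^{-1}$, and $c_3(\lambda)=\lambda/\alpha$, each of which stays bounded away from $0$ and $\infty$ for $\lambda\in[0,A]$ (with $\alpha=1$ fixed), so $c(\lambda,\alpha)\le c(A)$, proving the main estimate. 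The pointed Gromov-Hausdorff conclusion then follows from the symmetric bound obtained by swapping $H_0\leftrightarrow H_1$: for each fixed $R>0$, the Hausdorff distance between $\sigma(H_0)\cap[0,R]$ and $\sigma(H_1)\cap[0,R]$ is $O(\eps^{1/3})$, whence $d_{GH}(\sigma(H_0),\sigma(H_1))=o(1)$ as $\eps\to 0$.

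The only genuine obstacle is the bookkeeping of constants—verifying that Lemma \ref{resBd} extends from $\mathcal C$ to $\mathcal H$ (immediate from the boundedness of the resolvents in Lemma \ref{lem1b}), and that the $\lambda$-dependent constant in Corollary \ref{cor21} stays uniform over $[0,A]$. The cube-root exponent is inherited directly from Corollary \ref{cor21} and does not appear improvable within this approach.
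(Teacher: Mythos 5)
Your proof is correct and follows essentially the same route as the paper: apply the forward direction of Corollary \ref{cor21} to $H_1$ with $\alpha=1$, use the resolvent identity to reduce the Weyl quantities to matrix elements $((H+1)^{-k}\psi_j,\psi_j)$, transfer these to $H_0$ via Lemma \ref{resBd}, and invoke the quantitative reverse direction of Corollary \ref{cor21}. The only difference is that you make explicit two points the paper leaves implicit — the renormalization $\tilde\psi_j=\psi_j/\|\psi_j\|_0$ and the verification that $c_1(\lambda), c_2(\lambda), c_3(\lambda)$ stay uniformly controlled on $[0,A]$ — which is a welcome addition rather than a deviation.
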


\begin{proof}
We start by taking a point $0\leq \lambda\leq A$ in the spectrum of $H_1$. By Corollary~\ref{cor21} for $m=1,2$ we have
\begin{equation*}
|\,((H_1+1)^{-m}\psi_j,(H_1-\lambda) \psi_j)_1 \,|\leq\eps\|\psi_j\|_1
\end{equation*}
for a sequence $\{\psi_j\}$ with unit norm as $j \to \infty$.  The identity
\begin{equation} \label{gap0}
\begin{split}
&((H_1+1)^{-m}\psi_j,(H_1-\lambda) \psi_j)_1 \\
&= ( (H_1+1)^{-m+1}\psi_j,\psi_j)_1  -(1+\lambda)\, (\,(H_1+1)^{-m}\psi_j, \psi_j)_1
\end{split}
\end{equation}
together with Lemma~\ref{resBd} imply that the corresponding expression for $H_0$ should  also tend to zero. In fact, we have
\[
|((H_0+1)^{-m}\psi_j,(H_0-\lambda) \psi_j)_0 |\leq c(A)\eps
\]
for some constant $c(A)$ depending only on $A$. By Corollary ~\ref{cor21}, the conclusion is true for some, possibly different, constant $c(A)$.\end{proof}

Theorem \ref{PropBd} demonstrates that whenever $H_\eps\to H_0$ under the topology of $\eps$-closeness, then $\sigma(H_\eps)\to \sigma(H_0)$ as pointed metric spaces  in the Gromov-Hausdorff distance.  At the same time, it implies that gaps in the spectrum of $H_0$, if they exist, do not vanish instantaneously.

\section{The Spectrum of the Laplacian on $k$-Forms} \label{SecForm}

Let $(M^n,g)$ be a complete $n$-dimensional Riemannian manifold. The metric $g$ induces a pointwise inner-product on the space of $k$-forms $\Lambda^k(M)$ which is denoted $\< \cdot ,\cdot \>$. We denote the $L^2$ pairing  as $$ (\cdot , \cdot )=\int_M \< \cdot  ,\cdot \>$$  and the $L^2$ norm as $\|\cdot\|$.

Let $L^2(\Lambda^k(M))$  denote the space of $L^2$ integrable $k$-forms.  Denote by $\Delta_k$  the Laplacian on $k$-forms  as well as its Friedrichs extension on $L^2$. We denote the  domain of the Laplacian on $k$-forms by $\mathfrak{Dom}(k,\Delta)$.   For the remaining of this paper, we shall write $\Delta$ instead of $\Delta_k$ for $0\leq k\leq  n$ whenever the order of the form is implied.

As mentioned in the Introduction, the notation $\sigma(k,\Delta)$, $\sigma_{\mathrm ess}  (k,\Delta)$ \emph{resp.}, actually refers to the pointed complete metric space
\[
(\sigma(k,\Delta)\cup \{-1\}, -1),\qquad (\sigma_{\mathrm ess}  (k,\Delta)\cup\{-1\},-1)\quad {resp.}
\]
We set
\[
\sigma  (-1,\Delta)=\sigma  (n+1,\Delta)=\sigma_{\mathrm ess}   (-1,\Delta)=\sigma_{\mathrm ess}   (n+1,\Delta)=\emptyset
\]
which, according to the above convention, means that they are all the single point metric space $\{-1\}$.

\begin{thm} \label{thm31} Let $(M,g)$ be a complete Riemannian manifold. For any $0\leq k\leq n$,
suppose that $\lambda>0$ belongs to $\sigma (k,\Delta)$. Then one of the following holds:
\begin{enumerate}
\item[(a)] $\lambda\in\sigma  (k-1,\Delta)$, or
\item[(b)] $\lambda\in \sigma  (k+1,\Delta)$.
\end{enumerate}
The same result is true for the essential spectrum.
\end{thm}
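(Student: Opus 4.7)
The key idea is the Hodge decomposition $\Delta = d\delta + \delta d$ together with the commutation relations $d\Delta_k = \Delta_{k+1}d$ and $\delta \Delta_k = \Delta_{k-1}\delta$. Given a Weyl sequence $\omega_j$ for $\lambda$ in $\sigma(k,\Delta)$, the plan is to show that either $d\omega_j/\|d\omega_j\|$ or $\delta\omega_j/\|\delta\omega_j\|$ is a generalized Weyl sequence, in the sense of Corollary~\ref{cor21}, for $\lambda$ on $(k+1)$-forms or $(k-1)$-forms. First I apply the classical Weyl criterion (Theorem~\ref{Thm.Weyl}) to obtain $\omega_j \in \mathfrak{Dom}(k,\Delta)$ with $\|\omega_j\| = 1$ and $\eta_j := (\Delta - \lambda)\omega_j \to 0$ in $L^2$; by density of $C_c^\infty(\Lambda^k)$ in the graph norm on a geodesically complete manifold, one may further take each $\omega_j$ smooth with compact support. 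Then
\[
\|d\omega_j\|^2 + \|\delta\omega_j\|^2 = (\omega_j, \Delta\omega_j) = \lambda + (\omega_j, \eta_j) \longrightarrow \lambda > 0,
\]
so after passing to a subsequence at least one of the two summands is bounded below by $\lambda/3$. Without loss of generality, assume $\|d\omega_j\|^2 \geq \lambda/3$ (the symmetric case with $\delta$ yields conclusion~(a)) and set $\tilde u_j := d\omega_j/\|d\omega_j\|$, a unit $(k+1)$-form.

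The main computation is to verify the hypothesis of Corollary~\ref{cor21} for $\lambda$ and $\Delta_{k+1}$ on the sequence $\tilde u_j$. Using the resolvent commutation $(\Delta_{k+1}+\alpha)^{-m}d = d(\Delta_k+\alpha)^{-m}$, the identity $\delta d = \Delta_k - d\delta$, and integration by parts (all legitimate since $\omega_j, \eta_j \in C_c^\infty$), the inner product
\[
\bigl((\Delta_{k+1}+\alpha)^{-m}\tilde u_j,\;(\Delta_{k+1}-\lambda)\tilde u_j\bigr)
\]
reduces to $\|d\omega_j\|^{-2}$ times a sum of two terms of the form $\bigl(T(\Delta_k+\alpha)^{-m}\omega_j,\,\eta_j\bigr)$, with $T$ equal to $\Delta_k$ or $d\delta$. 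Both of $\Delta_k(\Delta_k+\alpha)^{-m}$ and $d\delta(\Delta_k+\alpha)^{-m}$ are bounded operators by the spectral theorem (for the latter, use the orthogonality $\|d\delta\psi\|^2 + \|\delta d\psi\|^2 = \|\Delta\psi\|^2$ on smooth compactly supported $\psi$), so the whole expression is $O(\|\eta_j\|) \to 0$ for $m=1,2$. Corollary~\ref{cor21} then places $\lambda$ in $\sigma(k+1,\Delta)$. For the essential spectrum, a weakly null Weyl sequence gives $d\omega_j \rightharpoonup 0$ via the duality $(d\omega_j,\phi) = (\omega_j,\delta\phi)$ for $\phi \in C_c^\infty(\Lambda^{k+1})$ together with the boundedness of $\|d\omega_j\|$, whence $\tilde u_j \rightharpoonup 0$ and the essential-spectrum part of Corollary~\ref{cor21} applies verbatim.

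The main obstacle is exactly what motivates the generalized criterion: the naive Weyl quantity $\|(\Delta_{k+1}-\lambda)\tilde u_j\| = \|d\eta_j\|/\|d\omega_j\|$ involves a derivative of $\eta_j$ that is \emph{not} controlled by $\|\eta_j\|$ alone, so one cannot hope to verify the classical Weyl criterion for $\tilde u_j$ directly. The resolvent smoothing factor $(\Delta_{k+1}+\alpha)^{-m}$ in Corollary~\ref{cor21} absorbs precisely this derivative, converting the problem into a pairing of bounded operators against a sequence whose norm tends to zero, which is the crucial gain over Theorem~\ref{Thm.Weyl}.
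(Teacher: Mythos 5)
Your proposal is correct and follows essentially the same route as the paper: classical Weyl sequence made smooth and compactly supported, the dichotomy $\|d\omega_j\|^2$ or $\|\delta\omega_j\|^2\gtrsim\lambda$, and then verification of the hypotheses of Corollary~\ref{cor21} for $d\omega_j/\|d\omega_j\|$ (resp.\ $\delta\omega_j/\|\delta\omega_j\|$) using the resolvent commutation and integration by parts; your bound via the boundedness of $\Delta(\Delta+\alpha)^{-m}$ and $d\delta(\Delta+\alpha)^{-m}$ is the same estimate the paper obtains from $\|(\Delta+1)^{-m}\delta d\omega\|^2+\|(\Delta+1)^{-m}d\delta\omega\|^2=\|(\Delta+1)^{-m}\Delta\omega\|^2$. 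Your treatment of the essential-spectrum case (weak nullity of $d\omega_j$ by testing against $\delta\phi$) is a detail the paper leaves implicit.
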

\begin{proof} Let $\lambda>0$ and $\lambda\in \sigma (k,\Delta)$. By the classical Weyl criterion we know that for each $\eps>0$,  there exists an approximate eigenfunction $\omega_\eps \in \mathfrak{Dom} (k,\Delta)$ such that $\|\omega_\eps\| =1$,
\begin{equation} \label{e1b}
\|(\Delta -\lambda)\omega_\eps\| \leq\eps.
\end{equation}
As $M$ is complete, we can in fact assume that the $\omega_\eps$ are smooth and compactly supported. Choosing $\eps<\lambda/2$, the triangle inequality gives
\begin{equation} \label{e0b}
(\Delta\omega_\eps,\omega_\eps) \geq  \frac 12 \, \lambda.
\end{equation}
Thus we  have
\begin{equation*}
\|d \omega_\eps\|^2 + \|\delta \omega_\eps\|^2 = (\Delta \omega_\eps, \omega_\eps) \geq \frac 12 \,  \lambda.
\end{equation*}
This estimate implies that either
\begin{equation*}
\| d\omega_\eps\|^2  \geq \frac{\lambda}{4}  \ \ \ \  \text{or}    \ \ \ \  \| \delta \omega_\eps\|^2  \geq  \frac{\lambda}{4} .
\end{equation*}

We first consider the case  $\| d\omega_\eps\|^2  \geq \frac{\lambda}{4} $. For simplicity, we denote $\omega_\eps=\omega.$  For any integer $m$,  $ \ \ (\Delta +1)^{-m} d  \omega = d (\Delta +1)^{-m}\omega \ $  and $ (\Delta +1)^{-m} \delta \omega= \delta (\Delta +1)^{-m}  \omega \ $.
For $m=1,2$  we compute
\begin{equation}\label{e3b}
\begin{split}
|\,((\Delta +1)^{-m}\,  d \omega, \;(\Delta -\lambda)\,  d\omega)\,| & = |\,((\Delta +1)^{-m}\,  \delta d \omega,  (\Delta - \lambda) \omega)   \,|\\
& \leq \eps  \|\,((\Delta +1)^{-m}\, \delta d \omega \|
\end{split}
\end{equation}
by   \eref{e1b}. At the same time, the commutativity properties of the resolvent and integration by parts give
\begin{equation*}
\begin{split}
\|(\Delta +1)^{-m}\, \delta d \omega \|^2 + \|(\Delta +1)^{-m}\, d \delta  \omega \|^2 = \|(\Delta +1)^{-m}\,  \Delta  \omega  \|^2\leq \|\Delta \omega\|^2 \leq (\eps+\lambda)^2,
\end{split}
\end{equation*}
where we have used  Lemma \ref{lem1b} and assumption \eref{e1b}. Combining this with \eref{e3b} we get
\begin{equation*}
|\,((\Delta +1)^{-m}\,  d \omega, \;(\Delta -\lambda)\,  d\omega)\,|  \leq \eps   (\eps +\lambda) \leq \eps (1 +\lambda) \|\omega\|^2\leq\frac{4\eps (1 +\lambda)}{\lambda}\|d\omega\|^2
\end{equation*}
by our assumption.

If we consider instead the case  $\| d\omega_\eps\|^2  \geq \frac{\lambda}{4} $, we similarly get
\begin{equation*}
|\,((\Delta +1)^{-m}\,  \delta \omega, \;(\Delta -\lambda)\,  \delta\omega)\,|   \leq \frac{4\eps (1 +\lambda)}{\lambda} \|\delta\omega\|^2.
\end{equation*}
By Corollary~\ref{cor21}, $\lambda$ must therefore belong to either $\sigma(k-1,\Delta)$ or $\sigma(k+1,\Delta)$.

The case for the essential spectrum follows similarly.
\end{proof}

\begin{remark}
Over a compact manifold the $k$-form spectrum is discrete, and each element of the spectrum is an eigenvalue. To each eigenvalue $\lambda$ corresponds a smooth form $\omega$ such that
\[
\Delta\omega-\lambda\omega=0.
\]
It is easy to check that
\[
\Delta d\omega-\lambda d\omega=0,\quad \Delta \delta\omega-\lambda \delta\omega=0.
\]
Therefore if $\lambda\neq 0$, at least one of $d\omega$ and $\delta\omega$ should not be zero, and hence the conclusion of  Theorem~\ref{thm31} is trivially true. On the other hand, it seems that in order to prove the result in the complete noncompact case, we need to make full use of our new Weyl criterion.
\end{remark}

\begin{remark}  Gromov and Shubin   proved   in~\cite{GrS} that over any Riemannian manifold, including the noncomplete case, we have the following Hodge decomposition theorem:
\[
L^2(M)={\rm ker}\,\Delta\oplus {\rm Im}\, d\oplus {\rm Im}\,\delta.
\]
However, the completeness assumption in Theorem~\ref{thm31} is essential as there exists a counterexample in the incomplete case. See Lu-Xu~\cite{luxu} for details.
\end{remark}

\begin{corl} \label{cor31}
The spectrum of the Laplacian on 1-forms contains the  spectrum of the Laplacian on functions except possibly for the point $\lambda=0$.
\end{corl}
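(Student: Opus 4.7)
The plan is to apply Theorem \ref{thm31} directly with $k=0$. Take any $\lambda>0$ in $\sigma(0,\Delta)$, the spectrum of the Laplacian on functions. By Theorem \ref{thm31}, one of the two alternatives must hold: either $\lambda \in \sigma(-1,\Delta)$ or $\lambda \in \sigma(1,\Delta)$.

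The first alternative is immediately ruled out by the convention established just before Theorem \ref{thm31}, namely $\sigma(-1,\Delta) = \emptyset$ (understood as the single point metric space $\{-1\}$ after the identification). Hence the only possibility left is $\lambda \in \sigma(1,\Delta)$, which is exactly the claim of the corollary (with the point $\lambda = 0$ explicitly excluded, since Theorem \ref{thm31} requires $\lambda > 0$).

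There is no real obstacle here — the corollary is a direct specialization of Theorem \ref{thm31}. The only thing worth remarking is \emph{why} one cannot hope to include $\lambda = 0$: over a noncompact complete manifold, $0$ may belong to the function spectrum (e.g.\ constants are approximated by cutoffs) without any nontrivial $L^2$ harmonic $1$-form being present, so in general one cannot push the statement to $\lambda = 0$. Analogous statements for the essential spectrum follow from the same argument applied to the essential spectrum version of Theorem \ref{thm31}, though only the spectrum statement is needed for the corollary as written.
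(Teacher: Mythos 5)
Your proposal is correct and matches the paper's intent exactly: the paper simply states that Corollary \ref{cor31} is an immediate consequence of Theorem \ref{thm31}, and your argument (apply the theorem with $k=0$ and rule out the alternative $\lambda\in\sigma(-1,\Delta)$ via the convention $\sigma(-1,\Delta)=\emptyset$) is precisely the intended specialization.
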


\begin{corl}\label{cor32}
The essential spectrum of the Laplacian on 1-forms is $[0,\infty)$ whenever the essential spectrum of the Laplacian on functions is $[0,\infty)$.
\end{corl}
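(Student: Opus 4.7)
The plan is to invoke the essential spectrum version of Theorem~\ref{thm31} directly, with $k=0$. Suppose $\sigma_{\mathrm{ess}}(0,\Delta)=[0,\infty)$. For any $\lambda>0$ in $\sigma_{\mathrm{ess}}(0,\Delta)$, Theorem~\ref{thm31} (applied to the essential spectrum) gives that either $\lambda\in \sigma_{\mathrm{ess}}(-1,\Delta)$ or $\lambda\in \sigma_{\mathrm{ess}}(1,\Delta)$. By the convention set above Theorem~\ref{thm31}, $\sigma_{\mathrm{ess}}(-1,\Delta)=\emptyset$ (i.e.\ it contains no nonnegative real points), so the first alternative is impossible. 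Hence $\lambda\in \sigma_{\mathrm{ess}}(1,\Delta)$.

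This shows $(0,\infty)\subset \sigma_{\mathrm{ess}}(1,\Delta)$. To promote this to $[0,\infty)\subset \sigma_{\mathrm{ess}}(1,\Delta)$, I would appeal to the fact (noted in the Introduction) that the essential spectrum is a closed subset of $\mathbb{R}$: since $\sigma_{\mathrm{ess}}(1,\Delta)$ contains all positive reals and is closed, it must also contain $0$. The reverse inclusion $\sigma_{\mathrm{ess}}(1,\Delta)\subset [0,\infty)$ is immediate from the nonnegativity of $\Delta$ on $1$-forms. Combining the two inclusions gives $\sigma_{\mathrm{ess}}(1,\Delta)=[0,\infty)$.

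There is essentially no obstacle here; the result is a direct corollary of the already-established Theorem~\ref{thm31} in its essential spectrum formulation. The only mild subtlety is the inclusion of $\lambda=0$, which cannot be obtained from Theorem~\ref{thm31} itself (whose hypothesis requires $\lambda>0$) but which comes for free from closedness of $\sigma_{\mathrm{ess}}$. This is exactly analogous to the statement of Corollary~\ref{cor31} for the full spectrum, where the point $\lambda=0$ had to be excluded; for the essential spectrum, however, closedness together with density of $(0,\infty)$ in $[0,\infty)$ saves us.
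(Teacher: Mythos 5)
Your argument is correct and is exactly the paper's route: the essential-spectrum version of Theorem~\ref{thm31} (equivalently, Corollary~\ref{cor31} for the essential spectrum) gives $(0,\infty)\subset\sigma_{\mathrm{ess}}(1,\Delta)$, and closedness of the essential spectrum supplies the point $0$. No differences worth noting.
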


Corollary \ref{cor31} is an immediate consequence of Theorem \ref{thm31}. Corollary \ref{cor32} follows from Corollary \ref{cor31} and the fact that the essential spectrum is a closed set. Combined with Theorem 1.3 of our article~\cite{char-lu-1} on the function spectrum these corollaries will give that the essential spectrum on 1-forms is $[0,\infty)$ on a significantly larger class of manifolds.

We recall the following definitions from ~\cite{char-lu-1}. Let $p$ be a fixed point in $M$. The cut locus  ${\rm Cut}(p)$  is a set of measure zero in $M$, and the manifold can be written as the disjoint union $M=\Omega\cup {\rm Cut}(p)$, where $\Omega$ is star-shaped with respect to $p$. That is, if $x\in \Omega$, then the geodesic line segment $\overline {px}\subset \Omega$. $\p r= \p /\p r$ is well defined on  $\Omega$.

\begin{Def} \label{defAsy}
Let $M$ be a complete noncompact Riemannian manifold. Let $p$ be a fixed point in $M$ and define $r(x)$ to be the radial function with respect to $p$. We say that the radial Ricci curvature of $M$ is asymptotically nonnegative with respect to $p$ if there exists a continuous function $\delta(r)$ on $\mathbb{R}^+$ such that
\begin{enumerate}
\item [(i)] ${\displaystyle \lim_{r\to\infty} \delta(r)=0};$
\item [(ii)] $\delta(r)>0$, and
\item[(iii)] ${\rm Ric}(\p r, \p r)\geq -(n-1) \delta(r)$ on $\Omega$.
\end{enumerate}
\end{Def}

We remark that manifolds that satisfy the condition above have subexponential volume growth at $p$, but need not have uniformly subexponential volume growth as defined in ~\cite{sturm}. In other words, the $L^p$ independence result  for the spectrum of the Laplacian on 1-forms need not hold~\cite{CharJFA}. Using Corollary \ref{cor31} and Theorem 1.3 of \cite{char-lu-1} we obtain
\begin{thm} \label{thm1forms}
Let $M$ be a complete noncompact Riemannian manifold. Suppose that, with respect to a fixed point $p$, the radial Ricci curvature is asymptotically nonnegative in the sense of Definition \ref{defAsy}. If the volume of the manifold is finite we additionally assume that its volume  does not decay exponentially at $p$.

Then the essential spectrum of the Laplacian on 1-forms is $[0,\infty)$.
\end{thm}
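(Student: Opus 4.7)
The plan is to obtain Theorem \ref{thm1forms} as a direct consequence of two inputs: the function-level spectral result of the authors' earlier paper \cite{char-lu-1} and the new form-lifting principle encapsulated in Corollary \ref{cor32}, itself a consequence of the generalized Weyl criterion (Theorem \ref{Thm.Weyl.bis-4}) through Theorem \ref{thm31}. No additional geometric analysis is needed beyond what is already in place.

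First, I would invoke Theorem 1.3 of \cite{char-lu-1}. Under the hypothesis that the radial Ricci curvature is asymptotically nonnegative with respect to $p$ in the sense of Definition \ref{defAsy}, that theorem asserts $\sigma_\mathrm{ess}(0,\Delta,M)=[0,\infty)$, provided that either $M$ has infinite volume, or, in the finite-volume case, the volume does not decay exponentially at $p$. These are precisely the hypotheses imposed in Theorem \ref{thm1forms}, so the conclusion applies verbatim to the function essential spectrum.

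Second, I would apply Corollary \ref{cor32}. Its content is that whenever $\sigma_\mathrm{ess}(0,\Delta,M)=[0,\infty)$, the same equality holds for $\sigma_\mathrm{ess}(1,\Delta,M)$. The mechanism is the dichotomy of Theorem \ref{thm31}: any positive $\lambda\in\sigma_\mathrm{ess}(0,\Delta)$ must lie in $\sigma_\mathrm{ess}(-1,\Delta)\cup\sigma_\mathrm{ess}(1,\Delta)$. Because the convention adopted earlier sets $\sigma_\mathrm{ess}(-1,\Delta)=\emptyset$, every such $\lambda$ is forced into $\sigma_\mathrm{ess}(1,\Delta)$. The point $\lambda=0$, which Theorem \ref{thm31} does not cover, is then recovered by closedness of $\sigma_\mathrm{ess}(1,\Delta)$ in $[0,\infty)$ as a limit of positive points already known to lie in it.

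The argument is therefore a two-line chain once the preceding machinery is available. The genuine work is hidden in the two black boxes: Theorem 1.3 of \cite{char-lu-1}, which uses test functions built from distance-type quantities controlled by the asymptotic Ricci hypothesis, and Theorem \ref{thm31}, whose proof required the generalized Weyl criterion in order to promote the function test forms $\omega_\varepsilon$ to $d\omega_\varepsilon$ or $\delta\omega_\varepsilon$ without loss of regularity. Given both, the potential obstacle — lifting spectral data from functions to 1-forms on an arbitrary complete manifold with no volume or curvature assumptions beyond those already used at the function level — has already been disposed of, and the proof of Theorem \ref{thm1forms} is simply the composition of these two implications.
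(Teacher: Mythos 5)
Your proposal is correct and is exactly the paper's argument: the authors state Theorem \ref{thm1forms} as an immediate consequence of Theorem 1.3 of \cite{char-lu-1} (giving $\sigma_{\mathrm{ess}}(0,\Delta)=[0,\infty)$ under these hypotheses) combined with Corollary \ref{cor31}/\ref{cor32}, with the point $\lambda=0$ recovered by closedness of the essential spectrum, just as you describe.
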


By the Poincar\'e duality, it is easy to see that
\[
\sigma(k,\Delta)=\sigma(n-k,\Delta)
\]
for any $0\leq k\leq n$.   Therefore, everything we stated for $1$-forms will also be true for $n-1$ forms. On the other hand, even though the $1$-form spectrum essentially contains the function spectrum, there is no monotonicity for the $k$-form spectrum with respect to $k$. In the case of hyperbolic space we have  {\em unimodality}, which means that the spectrum is increasing for $k\leq n/2$ and decreasing for other $k$:
\begin{example}\label{remark47}  The essential spectrum of the Laplacian on forms over hyperbolic space $\mathbb{H}^{N+1}$ is given by
\[
\sigma_{\mathrm ess} (k,\Delta) = \sigma_{\mathrm ess} (N+1-k,\Delta) =[\,(\frac{N}{2} - k)^2, \infty\,)
\]
for $0\leq k \leq \frac{N}{2}$, and whenever $N$ is odd
\[
\sigma_{\mathrm ess} (\frac{N+1}{2},\Delta) = \{0\} \cup [\,\frac{1}{4}, \infty\,).
\]
A proof of this result can be found in Donnelly~\cite{Don2}. Mazzeo and Phillips show in \cite{mazz} that the same result is true over quotients of hyperbolic space, $\mathbb{H}^{N+1}/\Gamma$, that are geometrically finite and have infinite volume.
\end{example}

However, one cannot expect a unimodality result on every manifold as we can see from the following example:
\begin{example}
Consider the product manifold $M^4= F^3 \times \mathbb{R}$, where $F^3$ is the compact flat three-manifold constructed by Hantzsche and Wendt in 1935 with first Betti number zero (see \cite{Cobb} for a family of manifolds of any dimension $n\geq 3$ with the same property). Note that $M$ is a flat noncompact manifold. By Theorem \ref{thm31} and  Definition \eref{lem81}
\[
\sigma_{\mathrm ess} (k,\Delta)=[0,\infty) \qquad \text{for} \ \ k=0,1,3,4.
\]
However, since there do not exist any harmonic 1-forms nor harmonic 2-forms on $F$ then
\[
\sigma_{\mathrm ess} (2,\Delta)=[a,\infty) \qquad \text{for some} \ \ a>0.
\]
In other words, its essential spectrum is smaller in half-dimension. Note that this does not contradict the result of Theorem \ref{thm31}.
\end{example}

 The computation of the $k$-form spectrum of the Laplacian for  $k\neq 0, 1,n-1, n$  is a  significantly more difficult task compared to the function case, due to the increased complexity in obtaining and controlling  approximate eigenforms. The classical Weyl criterion was used  to compute the $k$-form essential spectrum  over hyperbolic manifolds~\cites{Don2,mazz}, and over warped product measures with negative curvature~\cite{Ant} (for the Laplacian on functions see~\cite{Don81}).  On manifolds with asymptotically nonnegative curvature, it was not possible to apply the Weyl criterion directly without much stronger assumptions on the geometry and curvature of the manifold to show that the function spectrum was $[0,\infty)$ (see for example~\cites{ElwW04,EF93,Esc86,EF92,Don}). The $L^p$ independence result of Sturm~\cite{sturm} allowed for the computation of the $L^2$ spectrum under more general geometric conditions, which was first done by~\cite{Wang97} and later improved by ~\cite{Lu-Zhou_2011}. The $L^p$ independence result for forms made the computation of the $L^2$ 1-form spectrum a bit simpler, but it still required stronger curvature assumptions than for the case of functions to get the spectrum equal to $[0,\infty)$ (cf. ~\cite{CharJFA}). Theorem \ref{thm1forms} demonstrates the strength of the analytic result Theorem \ref{thm31}.

There exist various examples of manifolds over which the essential spectrum of the Laplacian on functions has gaps.  Lott proved in \cite{Lott2} that for any $\epsilon>0$, there is a complete connected noncompact finite-volume Riemannian manifold whose sectional curvatures lie in $[-1-\epsilon,-1+\epsilon]$ and whose function Laplacian has an infinite number of gaps in its essential spectrum. The gaps tend towards infinity. In the same article Lott claims that there should even be examples where the essential spectrum is a Cantor set. Post and  Lled\'o  \cites{Post1,Post2} use Floquet theory to give examples of Riemannian coverings of manifolds whose  essential spectrum has at least a prescribed finite number of gaps. More recently,  Schoen and  Tran in \cite{SchoTr} show that for any noncompact covering of a compact manifold one can find a metric on the base so that the lifted metric has an arbitrarily large number of gaps in its essential spectrum. Moreover, for any complete noncompact manifold with bounded curvature and positive injectivity radius they find a  uniformly equivalent metric with an arbitrarily large number of gaps in the essential spectrum. Their manifolds do not have nonnegative Ricci curvature, but some of them  have bounded positive scalar curvature.

Corollary \ref{cor31} tells us that the essential spectrum on 1-forms over these manifolds could be larger, but it is not known whether it would also have gaps. On the other hand, recall that on both the hyperbolic space, for $k\neq (N+1)/2$, (constant negative curvature) and the Euclidean space (zero curvature) the essential spectrum of the Laplacian on forms is a connected subset of the real line. It would be quite interesting to find sufficient conditions on the geometry of the manifold so that its essential spectrum is a connected subset of the real line.

\section{The Spectrum of the Laplacian on Forms Under Continuous Deformation of Metrics} \label{S5}

Let $(M,g)$ be a complete noncompact Riemannian manifold.   We  consider  the two operators that make up the Laplacian $\Delta$. We set
\[
\mathcal{L}^1=\delta d
\]
with associated quadratic form $Q^1 (\omega)=(d \omega,d \omega)$ and
\[
\mathcal{L}^2=   d \delta
\]
with associated quadratic form $Q^2 (\omega)=(\delta \omega,\delta \omega)$ on $k$-forms.

Each one of the above operators has a self-adjoint Friedrichs extension which is nonnegative. It can be easily seen that
$\mathfrak{Dom}(k,\Delta)= \mathfrak{Dom}(k,\mathcal{L}^1)\cap  \mathfrak{Dom}(k,\mathcal{L}^2)$. We will now illustrate how the spectra of these three operators are related, which is of its own interest.
\begin{lem} \label{lemSp} For any $0\leq k \leq n$ the following containments hold
\begin{equation} \label{Sp1}
\sigma(k,\Delta) \subset \sigma  (k,\mathcal{L}^1)\cup \sigma  (k,\mathcal{L}^2)
\end{equation}
and
\begin{equation}\label{Sp2}
\sigma (k,\Delta) \supset \{\sigma   (k,\mathcal{L}^1)\cup \sigma  (k,\mathcal{L}^2)\} \setminus \{0\}.
\end{equation}
The result is also true for the essential spectra of the operators.
\end{lem}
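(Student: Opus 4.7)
The plan is to exploit the $L^2$ Hodge--Kodaira decomposition
\[
L^2(\Lambda^k) = \mathcal{H}^k \oplus \overline{\mathrm{Im}\,d} \oplus \overline{\mathrm{Im}\,\delta},
\]
which holds on any Riemannian manifold by the Gromov--Shubin theorem mentioned earlier. The crucial observation is that each of the operators $\Delta$, $\mathcal{L}^1 = \delta d$, $\mathcal{L}^2 = d\delta$ is reduced by this orthogonal splitting and acts very simply on the summands, thanks to $d^2 = 0$ and $\delta^2 = 0$: on $\mathcal{H}^k$ all three operators vanish; on $\overline{\mathrm{Im}\,d}$ forms are $d$-closed, so $\mathcal{L}^1 \equiv 0$ and $\Delta \equiv \mathcal{L}^2$; and on $\overline{\mathrm{Im}\,\delta}$ forms are $\delta$-closed, so $\mathcal{L}^2 \equiv 0$ and $\Delta \equiv \mathcal{L}^1$. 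Essentially all the content of the lemma is encoded in this decomposition.

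For the inclusion \eqref{Sp1}, fix $\lambda \in \sigma(k,\Delta)$ and use the classical Weyl criterion to produce a unit-norm sequence $\omega_j \in \mathfrak{Dom}(k,\Delta)$ with $\|(\Delta - \lambda)\omega_j\| \to 0$. Writing $\omega_j = \omega_j^0 + \omega_j^1 + \omega_j^2$ in the Hodge decomposition, the orthogonality of the three summands combined with the fact that $\Delta$ preserves them yields
\[
\|(\Delta - \lambda)\omega_j\|^2 = \lambda^2 \|\omega_j^0\|^2 + \|(\mathcal{L}^2 - \lambda)\omega_j^1\|^2 + \|(\mathcal{L}^1 - \lambda)\omega_j^2\|^2.
\]
If $\lambda > 0$ the first term forces $\|\omega_j^0\| \to 0$, so $\|\omega_j^1\|^2 + \|\omega_j^2\|^2 \to 1$; after passing to a subsequence, at least one of $\|\omega_j^1\|, \|\omega_j^2\|$ is bounded below, and normalizing that component produces a Weyl sequence for $\mathcal{L}^2$ or $\mathcal{L}^1$ at $\lambda$. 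The case $\lambda = 0$ is handled directly from the decomposition itself: $0$ automatically lies in $\sigma(\mathcal{L}^1)$ whenever $\mathcal{H}^k \oplus \overline{\mathrm{Im}\,d} \neq 0$, while in the degenerate case where that subspace vanishes one has $L^2(\Lambda^k) = \overline{\mathrm{Im}\,\delta}$, on which $\mathcal{L}^1 = \Delta$, forcing $0 \in \sigma(\mathcal{L}^1)$ either way.

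The reverse inclusion \eqref{Sp2} is symmetric: given $\lambda > 0$ in $\sigma(\mathcal{L}^1)$ and a Weyl sequence $\omega_j$ for $\mathcal{L}^1$, the Hodge decomposition gives
\[
\|(\mathcal{L}^1 - \lambda)\omega_j\|^2 = \lambda^2 \|\omega_j^0\|^2 + \lambda^2 \|\omega_j^1\|^2 + \|(\Delta - \lambda)\omega_j^2\|^2,
\]
so $\|\omega_j^0\|, \|\omega_j^1\| \to 0$ and $\|\omega_j^2\| \to 1$; then $\omega_j^2/\|\omega_j^2\|$ is an approximate eigenform for $\Delta$ at $\lambda$, and the analogous argument starting from $\sigma(\mathcal{L}^2)$ uses the $\omega_j^1$ component instead. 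For the essential spectrum, weak null convergence of $\omega_j$ transfers to each Hodge component because the Hodge projections are orthogonal (hence weakly continuous), and renormalization preserves weak null convergence as long as the chosen component has norm bounded below. The main technical points to verify are the $\lambda = 0$ edge case in \eqref{Sp1} and the claim that $\mathcal{L}^1$ and $\mathcal{L}^2$ genuinely preserve the Hodge summands (which reduces to $d^2 = 0$ and $\delta^2 = 0$); once those are settled, the proof is just a routine orthogonal-projection and renormalization argument.
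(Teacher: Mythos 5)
Your route is genuinely different from the paper's. The paper never invokes the Hodge--Kodaira decomposition: it takes the classical Weyl sequence $\psi_j$ (smooth, compactly supported), observes $\|\Delta\psi_j\|^2=\|\mathcal L^1\psi_j\|^2+\|\mathcal L^2\psi_j\|^2$ to single out a dominant piece, and then uses the normalized forms $\mathcal L^1\psi_j/\|\mathcal L^1\psi_j\|$ as test forms in the generalized Weyl criterion (Corollary \ref{cor21}), verified through the purely algebraic identities $\mathcal L^1\mathcal L^2=\mathcal L^2\mathcal L^1=0$ and $\Delta\mathcal L^i=\mathcal L^i\Delta$ on compactly supported forms, plus boundedness of $(\mathcal L^1+1)^{-m}\mathcal L^1$. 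The whole point of that criterion is to avoid ever having to show that the modified test forms lie in the domain of the target operator with control on $\|(\mathcal L^1-\lambda)\,\cdot\,\|$ --- which is exactly the burden your orthogonal-projection argument takes on. Your approach, if completed, is arguably more transparent and yields slightly more (it identifies \emph{which} Hodge component carries the spectrum), whereas the paper's is lighter on functional-analytic prerequisites and matches the machinery it has already built.

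The genuine gap is in your parenthetical claim that the reduction of $\Delta$, $\mathcal L^1$, $\mathcal L^2$ by the splitting ``reduces to $d^2=0$ and $\delta^2=0$.'' It does not. You need three nontrivial facts about the \emph{unbounded self-adjoint extensions}: (a) that the minimal and maximal closures of $d$ and $\delta$ coincide (Gaffney's theorem, which uses completeness and is false in general on incomplete manifolds --- compare the paper's remark citing Lu--Xu); (b) that the resolvent or heat semigroup of each of $\Delta,\mathcal L^1,\mathcal L^2$ commutes with the three Hodge projections, so that the operators are honestly reduced and the displayed Pythagorean identities for $\|(\Delta-\lambda)\omega_j\|^2$ make sense; and (c) the operator identity $\Delta|_{\overline{\mathrm{Im}\,\delta}}=\mathcal L^1|_{\overline{\mathrm{Im}\,\delta}}$ \emph{including domains}, i.e.\ that two a priori different Friedrichs extensions restrict to the same self-adjoint operator on that summand (this is a statement about form domains, not about $\delta^2=0$). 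All of (a)--(c) are true on a complete manifold and can be assembled from standard sources, so your proof can be repaired; but as written the central structural claim is asserted rather than proved, and it is precisely the part of the argument that the paper's generalized Weyl criterion was designed to circumvent. The $\lambda=0$ case and the weak-convergence bookkeeping for the essential spectrum are fine.
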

\begin{proof}

We first remark that  $\Delta, \mathcal{L}^1,\mathcal {L}^2$ are all closed. Therefore  the forms to which we apply the Weyl Criteria can be taken to be  smooth with compact support.

If $k=0$, then $\Delta=\mathcal L^1$ and $\mathcal L^2=0$, and the statement is trivially true. Similarly, for $k= n$, $\Delta=\mathcal L^2$. As a result we only consider the case $0<k<n$.

We begin by proving \eref{Sp1}.  We first show that 0 is always a point in $ \sigma  (k,\mathcal{L}^1)\cup \sigma  (k,\mathcal{L}^2).$ This follows from the simple fact that for any smooth compactly supported $(k-1)$-form $\omega$, $d\omega$ is a $k$-eigenform of $\mathcal{L}^1=\delta d$ corresponding to the eigenvalue 0.  Moreover, since $k\geq 1$ we can always find a sequence of compactly supported approximate $(k-1)$-forms $u_j$ such that $\|du_j\|=1$ on $M$. This implies that $0\in \sigma   (k,\mathcal{L}^1)$. As a result, if $0 \in \sigma(k,\Delta)$, then $0\in \sigma  (k,\mathcal{L}^1)\cup \sigma  (k,\mathcal{L}^2) $.

We now consider  $\lambda>0$ in $\sigma(k,\Delta)$. By the classical Weyl criterion, there exists  a sequence of approximate eigenforms $\{\psi_j\}_{j\in \mathbb{N}}$ with $\| \psi_j\|=1$ such that for any $\eps>0$, $0<\eps<\lambda/2$, we have
\[
\|(\Delta -\lambda) \psi_j\| <\eps \ \ \ \text{as} \ \ \ j\to \infty.
\]
By the triangle inequality,
\[
\|\Delta \psi_j\| \geq \frac{\lambda}{2}
\]
for $j$ large enough. Since\footnote{Note that the $\psi_j$ are smooth with compact support.} $\|\Delta \psi_j\|^2 = \|\mathcal{L}^1  \psi_j\|^2 +\|\mathcal{L}^2 \psi_j\|^2 $, there must exist a subsequence of $j$, for which either
\[
\|\mathcal{L}^1  \psi_j\|^2 \geq \frac{\lambda}{4} \ \ \text{or} \ \ \|\mathcal{L}^2  \psi_j\|^2 \geq \frac{\lambda}{4}.
\]

Suppose that $\|\mathcal{L}^1  \psi_j\|^2 \geq \frac{\lambda}{4}$. Observe that  on smooth forms with compact support,
\[
\mathcal{L}^1  \mathcal{L}^2 = \mathcal{L}^2 \mathcal{L}^1 =0
\]
and
\[
\Delta \mathcal{L}^i = \mathcal{L}^i \Delta
\]
for $i=1,2$.  Thus for $m=1,2$
\begin{equation*}
\begin{split}
&|\,(\,(\mathcal{L}^1+1)^{-m} \mathcal{L}^1 \psi_j,  (\mathcal{L}^1-\lambda) \mathcal{L}^1 \psi_j) \,|
= |\ (\,(\mathcal{L}^1+1)^{-m} \mathcal{L}^1 \psi_j,(\Delta  -\lambda) \mathcal{L}^1 \psi_j) \,| \\&=  |\ (\,(\mathcal{L}^1+1)^{-m} (\mathcal{L}^1)^2 \psi_j,(\Delta  -\lambda) \psi_j) \,|
\leq \| \mathcal{L}^1 \psi_j \|\; \cdot\|(\Delta  -\lambda) \psi_j\| \leq   \eps \, \frac{4}{\lambda}\,\| \mathcal{L}^1 \psi_j \|^2
\end{split}
\end{equation*}
where we have used  that $\|(\mathcal{L}^1+1)^{-m}\mathcal{L}^1\|\leq 1$  which can be proved similarly to Lemma \ref{lem1b}. Setting $\tilde\psi_j=\mathcal{L}^1 \psi_j/\|\mathcal{L}^1 \psi_j \|$ and rescaling the above inequalities, we see that the $\tilde\psi_j$   satisfy the conditions of Corollary \ref{cor21}. Therefore $\lambda\in \sigma (k,\mathcal{L}^1)$.  The argument for the case $\|\mathcal{L}^2  \psi_j\|^2 \geq \frac{\lambda}{4}$ is identical. We thus conclude that  $\lambda$ belongs either to $\sigma (k,\mathcal{L}^1)$ or to  $\sigma (k,\mathcal{L}^2)$.

To prove \eref{Sp2} we now suppose that $\lambda>0$ belongs to $\sigma (k,\mathcal{L}^1)$.  Again by the classical Weyl criterion there exists a sequence of smooth approximate eigenforms $\{\psi_j\}_{j\in \mathbb{N}}$ with $\| \psi_j\|=1$ such that for any $\eps>0$, $0<\eps<\lambda/2$, we have
\[
\|(\mathcal{L}^1 -\lambda) \psi_j\| <\eps \ \ \ \text{as} \ \ \ j\to \infty.
\]
As a result,
\[
\frac{\lambda}{2}\leq \|\mathcal{L}^1 \psi_j \|\leq 2\lambda
\]
for $j$ large enough. For $m=1,2$ we similarly get
\begin{equation*}
\begin{split}
&|\,(\,(\Delta +1)^{-m} \mathcal{L}^1 \psi_j,  (\Delta-\lambda) \mathcal{L}^1 \psi_j)\,|
= |\, (\,(\Delta+1)^{-m} \mathcal{L}^1 \psi_j,(\mathcal{L}^1  -\lambda) \mathcal{L}^1 \psi_j)\,|\\
& = |\, (\,(\Delta+1)^{-m} \Delta \mathcal{L}^1 \psi_j,(\mathcal{L}^1  -\lambda) \psi_j)\,|
\leq \| \mathcal{L}^1 \psi_j \|\;\cdot \|(\mathcal{L}^1  -\lambda) \psi_j\| \leq \eps \,\frac{4}{\lambda} \,  \| \mathcal{L}^1 \psi_j \|^2.
\end{split}
\end{equation*}
Setting  $\tilde\psi_j=\mathcal{L}^1 \psi_j/\|\mathcal{L}^1 \psi_j \|$ and rescaling the above inequalities,   we see that the $\tilde\psi_j$  satisfy the conditions of Corollary \ref{cor21}. Therefore, $\lambda$ belongs to the spectrum of $\Delta$.

In a similar manner we can prove that $\sigma (k,\mathcal{L}^2) \setminus \{0\} \subset \sigma (k,\Delta)$. As a result
\[
\{\sigma (k,\mathcal{L}^1) \cup \sigma (k,\mathcal{L}^2)\} \setminus \{0\} \subset  \sigma (k,\Delta).
\]
The case of the essential spectrum follows in a similar manner.
\end{proof}

We now consider a manifold $M$ to which we can assign two Riemannian metrics, $g_0, g_1$ such that $(M,g_0)$ and $(M,g_1)$ are smooth complete manifolds with respect to both. We say that the two metrics are $\eps$-close if for some $0<\eps<1$
\begin{equation} \label{ecls}
(1- \eps) g_0  \leq g_1 \leq  (1+\eps) g_0.
\end{equation}

We denote by $\delta_i$ the adjoint of $d$ on $(M,g_i)$ for $i=0,1$ and the associated Laplacian operators by
\[
\Delta_i =d \delta_i+ \delta_i d.
\]

We denote by $(\cdot,\cdot)_i$ the $L^2$ pairing in the $g_i$ metric and by $\|\cdot\|_i$ the respective $L^2$ norm. In this section we will show that Theorem \ref{PropBd} can be extended  to $\Delta_i$. Let
\[
\mathcal{L}^1_i=\delta_i d \ \ {\rm  and} \ \ \mathcal{L}^2_i=   d \delta_i.
\]
Their associated quadratic forms are given by
\[
Q^1_i (\omega,\omega)=(d \omega,d \omega)_i \ \ {\rm and} \ \ Q^2_i (\omega,\omega)=(\delta_i \omega,\delta_i \omega)_i,
\]
respectively.

\begin{thm} \label{thmCont}
Let $M$ be a  manifold, and let $g_0, g_1$ be two smooth complete Riemannian metrics on $M$ that are $\eps$-close for some $0<\eps<1/2$.

Fix $A>0$. Then for any $\lambda \in \sigma(k,\Delta_1)\cap [0,A]$
\[
{\rm dist}(\lambda,\sigma(k,\Delta_0)) <   c(A) \, \eps^{\frac 13}
\]
for some constant $c(A)$ depending only on $A$.
A similar result holds for the essential spectra of the operators.
In particular,
\[
d_{GH}(\sigma(k,\Delta_1), \sigma(k,\Delta_0))=o(1),
\]
where $o(1)\to 0$, as $\eps\to 0$.
\end{thm}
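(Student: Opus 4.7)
The plan is to reduce Theorem \ref{thmCont} to Theorem \ref{PropBd} applied to the partial operators $\mathcal{L}^1_i = \delta_i d$ and $\mathcal{L}^2_i = d\delta_i$, and then transfer the conclusion back to the full Laplacian $\Delta_i$ via Lemma \ref{lemSp}. The reason for this detour is that the codifferential $\delta_i$ depends on first derivatives of $g_i$, so $\|\delta_0 \omega\|_0$ and $\|\delta_1 \omega\|_1$ cannot be compared directly from the pointwise bound \eqref{ecls}. By contrast, the quadratic form $Q^1_i(\omega,\omega) = \|d\omega\|_i^2$ only involves the metric-free exterior derivative $d$ together with the fiberwise inner product induced on $(k+1)$-forms, and is therefore immediately controllable from \eqref{ecls}.

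First I would show that $\mathcal{L}^1_0$ and $\mathcal{L}^1_1$ acting on $k$-forms, each equipped with the corresponding $L^2$ inner product, are $C\eps$-close on the common core of smooth compactly supported forms in the sense of Definition \ref{defBd}. The pointwise inequality \eqref{ecls} implies, by elementary fiberwise linear algebra, that the induced inner products on $\Lambda^j T^\ast M$ satisfy $(1-C_{n,j}\eps)\langle\cdot,\cdot\rangle_0 \leq \langle\cdot,\cdot\rangle_1 \leq (1+C_{n,j}\eps)\langle\cdot,\cdot\rangle_0$ for every $j$, and that the Riemannian volume forms differ by the same order. Integrating these estimates at $j=k$ and $j=k+1$ yields both \eqref{A1} and \eqref{A2} for $\mathcal{L}^1_i$, so Theorem \ref{PropBd} gives ${\rm dist}(\lambda,\sigma(k,\mathcal{L}^1_0)) < c(A)\eps^{1/3}$ for every $\lambda \in \sigma(k,\mathcal{L}^1_1) \cap [0,A]$, together with the analogous estimate for the essential spectrum.

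Next I would handle $\mathcal{L}^2_i$ via Hodge duality: for each $i$, the Hodge star $*_i$ is an isometry $L^2\Lambda^k(M,g_i) \to L^2\Lambda^{n-k}(M,g_i)$, and the identity $\delta_i = \pm *_i d *_i$ exhibits $\mathcal{L}^2_i$ on $k$-forms as unitarily equivalent to $\mathcal{L}^1_i$ on $(n-k)$-forms; hence $\sigma(k,\mathcal{L}^2_i) = \sigma(n-k,\mathcal{L}^1_i)$. Applying the previous step at level $n-k$ then yields the corresponding closeness of $\sigma(k,\mathcal{L}^2_0)$ and $\sigma(k,\mathcal{L}^2_1)$. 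Combining the two estimates with Lemma \ref{lemSp}, any positive $\lambda \in \sigma(k,\Delta_1) \cap (0,A]$ lies in $\sigma(k,\mathcal{L}^1_1)\cup\sigma(k,\mathcal{L}^2_1)$, is therefore $c(A)\eps^{1/3}$-close to some $\mu \in \sigma(k,\mathcal{L}^1_0)\cup\sigma(k,\mathcal{L}^2_0)$, and the reverse inclusion in Lemma \ref{lemSp} places $\mu$ back in $\sigma(k,\Delta_0)$ whenever $\mu>0$.

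The main obstacle is the behavior near $\lambda=0$, where Lemma \ref{lemSp} only equates the partial and full spectra modulo $\{0\}$: if the $\mu$ produced above equals $0$, or if $\lambda=0$ itself, we have not yet exhibited a point of $\sigma(k,\Delta_0)$ close to $\lambda$. I expect to dispose of this case by restricting the approximating sequences supplied by Corollary \ref{cor21} to the orthogonal complement of the $L^2$-harmonic $k$-forms, a subspace that is invariant under both $\Delta_i$ and both $\mathcal{L}^j_i$; since the $\eps$-closeness of the inner products and quadratic forms passes to this orthogonal complement, the approximate eigensequences for $\mathcal{L}^j$ used in Theorem \ref{PropBd} can be taken in the range of $d$ or $\delta_i$, forcing the resulting approximate eigenvalue on the $\Delta_i$ side to be strictly positive. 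The essential spectrum version then follows by carrying property (4) of Corollary \ref{cor21} through each step, since all the orthogonality and weak-convergence assertions used above are preserved under the reduction.
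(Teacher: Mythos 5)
Your reduction for $\lambda$ bounded away from zero is essentially the paper's own argument: $d$ is metric-independent, so $\mathcal{L}^1_0$ and $\mathcal{L}^1_1$ are $O(\eps)$-close in the sense of Definition \ref{defBd}, Theorem \ref{PropBd} compares their spectra, Hodge duality handles $\mathcal{L}^2_i$, and Lemma \ref{lemSp} transfers the conclusion back to $\Delta_i$. That part is correct, and you are right to single out the zero eigenvalue as the remaining obstacle: since $0$ always lies in $\sigma(k,\mathcal{L}^1_0)$ (exact forms are annihilated by $\delta_0 d$), the point $\mu$ produced by Theorem \ref{PropBd} may well be $0$, which Lemma \ref{lemSp} does not place in $\sigma(k,\Delta_0)$.

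Your proposed fix for that case, however, does not work. The space of $L^2$-harmonic $k$-forms and its orthogonal complement are metric-dependent: $h$ is $g_1$-harmonic when $dh=0$ and $\delta_1h=0$, and $\|\delta_0 h\|_0$ need not be small, because $\|\delta_0\omega\|_0$ and $\|\delta_1\omega\|_1$ cannot be compared from the $C^0$ bound \eqref{ecls} at all --- this is precisely why the detour through $\mathcal{L}^1$ was needed in the first place. Hence the two restricted operators do not act on a common subspace with comparable inner products and quadratic forms, and Definition \ref{defBd} and Theorem \ref{PropBd} do not apply to the restrictions. Worse, if $0\in\sigma(k,\Delta_1)$ is witnessed by an actual harmonic form, restricting to the orthogonal complement discards exactly that witness; and the device of passing to $\mathcal{L}^1\psi_j/\|\mathcal{L}^1\psi_j\|$ (which is what ``taking the sequence in the range of $\delta$'' amounts to in the proof of Lemma \ref{lemSp}) requires $\|\mathcal{L}^1\psi_j\|\geq\lambda/2$ and degenerates precisely as $\lambda\to 0$. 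The paper treats $\lambda=0$ by a separate direct argument: assuming ${\rm dist}(0,\sigma(\Delta_0))>100\eps$, it takes an approximate $g_1$-harmonic form $\omega$, sets $\eta=\Delta_0^{-1}\omega$, writes $\omega=\delta_0(d\eta)+d(\delta_0\eta)$, and pairs the first summand against $\omega$ in the $g_0$-inner product (so only the metric-free quantity $d\omega$ appears) and the second in the $g_1$-inner product (so only $\delta_1\omega$, controlled by the equation, appears), then compares the two pairings via \eqref{Bd1} to reach a contradiction. Some argument of this kind --- one that never compares $\delta_0\omega$ with $\delta_1\omega$ --- is what your final step needs and currently lacks.
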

\begin{proof}
 Given that the $*$-operator is an isometry in the respective metric,  for all $0\leq k \leq n$, we have
\[
\mathfrak{Dom}(k,\mathcal{L}^2_i) = \mathfrak{Dom}({ n -k},\mathcal{L}^1_i)
\]
for $i=0,1$. Moreover,
\[
\sigma(k,\mathcal{L}^2_i) =\sigma(n -k,\mathcal{L}^1_i).
\]
The same holds true for the essential spectrum.

Since $d$ is metric independent, and  since $\mathfrak{Dom}(k,\mathcal{L}^1_1)\cap \mathfrak{Dom}(k,\mathcal{L}^1_0)\supset\mathcal C$, where $\mathcal C$ is the space of smooth forms with compact support, by the $\eps$-closeness of the metrics we have
\[
(1-\eps) \, Q^1_0 (\omega) \leq Q^1_1 (\omega)\leq (1+\eps)\, Q^1_0 (\omega)
\]
for any $\omega\in\mathcal C$. In other words, the operators $\mathcal{L}^1_1$ and $\mathcal{L}^1_0$ are $\eps$-close. For  any $\lambda \in \sigma(k,\mathcal{L}^1_1)\cap (0,A]$, by  Lemma \ref{lemSp}, we have
\[
{\rm dist}(\lambda,\sigma(k,\Delta_0))\leq {\rm dist}(\lambda,\sigma(k,\mathcal{L}^1_0)) <   c(A) \, \eps^{\frac 13}.
\]

The case $\lambda=0$ can be treated directly. If $0\in\sigma(\Delta_1)$, we claim
\[
{\rm dist}(0,\sigma(\Delta_0))\leq 100\eps.
\]
If not, then there exists $\eps_0$ such that ${\rm dist}(0,\sigma(\Delta_0))>100\eps_0$. Let $\omega$ be a smooth compactly supported $k$-form such that $\|\omega\|_1=1$ and
\[
\|\Delta_1\omega\|_1<\eps_0.
\]
Then
\[
\|d\w\|_1 \leq \sqrt{\eps_0}, \ \ \|\delta_1 \w\|_1 \leq \sqrt{\eps_0}.
\]

Set $\eta=\Delta_0^{-1}\omega, \ \ \eta_1=d\eta$ and $\eta_2=\delta_0 \eta$. Then
\[
\|\eta\|_0 \leq \frac{1}{100 \eps_0}, \ \ \|\eta_1\|_0 \leq \frac{1}{5 \sqrt{\eps_0}}, \ \ \|\eta_2\|_0 \leq  \frac{1}{5 \sqrt{\eps_0}}, \ \ \text{and} \ \ \|d \eta_2\|_0 \leq 2.
\]
Therefore,
\[
|(\w, \delta_0 \eta_1)_0 | = |(d\w, \eta_1)_0| \leq \|d\w\|_0 \cdot \|\eta_1\|_0 \leq 2 \|d\w\|_1 \cdot \|\eta_1\|_0 \leq \frac{2}{5}.
\]
Moreover,
\[
|(\w, d \eta_2)_1 | = |(\delta_1\w, \eta_2)_1| \leq  \|\delta_1\w\|_1 \cdot 2 \|\eta_2\|_0 \leq \frac{2}{5},
\]
whereas by the $\eps$-closeness of the metrics we have
\[
|(\w, d \eta_2)_1 -(\w, d \eta_2)_0 | \leq 2 \eps \|d \eta_2\|_0 \leq 4 \eps.
\]
As a result,
\[
|(\w, d \eta_2)_0 | \leq \frac{2}{5} +4\eps.
\]

Observing that $\w= \delta_0 \eta_1 + d \eta_2$ and combining the above estimates we get
\[
\|\w\|_1= |(\w, \delta_0 \eta_1)_0 + (\w, d \eta_2)_0  | \leq \frac 45 + 4\eps < \frac{9}{10}
\]
which gives a contradiction for $\eps$ small enough. This completes the proof.
\end{proof}

The following corollary is now immediate.
\begin{corl}
Let $M$ be a complete noncompact manifold, and let $\{g_\eps\}_{\eps\in[0,1/2]}$ be a family of smooth complete Riemannian metrics on $M$ such that
\[
(1- \eps) g_0  \leq g_\eps \leq  (1+\eps) g_0.
\]
Then
\[
d_{GH}(\sigma(k,\Delta_\eps), \sigma(k,\Delta_0))=o(1),
\]
where $o(1)\to 0$, as $\eps\to 0$. A similar result holds for the essential spectrum of the operators.
\end{corl}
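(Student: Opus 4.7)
The plan is to reduce the corollary to a direct application of Theorem~\ref{thmCont}, applied symmetrically in both directions of the pair $(g_0,g_\eps)$, and then unpack the definition of pointed Gromov--Hausdorff distance on a ball of arbitrary radius.

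First, I observe that the hypothesis $(1-\eps)g_0 \leq g_\eps \leq (1+\eps)g_0$ is symmetric modulo a harmless rescaling of $\eps$: inverting the inequalities gives $\tfrac{1}{1+\eps}g_\eps \leq g_0 \leq \tfrac{1}{1-\eps}g_\eps$, so there is an $\eps' = \eps/(1-\eps) = O(\eps)$ for which $(1-\eps')g_\eps \leq g_0 \leq (1+\eps')g_\eps$. Thus Theorem~\ref{thmCont} applies with either metric playing the role of $g_0$ or $g_1$; in both cases the comparison parameter is $O(\eps)$, and consequently $O(\eps^{1/3})$ appears in the spectral estimate.

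Second, to convert the spectral bound of Theorem~\ref{thmCont} into pointed Gromov--Hausdorff convergence, I recall the convention of Section~\ref{SecForm}: $\sigma(k,\Delta)$ is identified with the pointed metric space $(\{-1\}\cup\sigma(k,\Delta),-1)$ inheriting the distance from~$\mathbb{R}$. Fix any $R>0$ and set $A = R-1$. The closed ball of radius~$R$ about $-1$ in $\{-1\}\cup\sigma(k,\Delta_\eps)$ is exactly $\{-1\}\cup(\sigma(k,\Delta_\eps)\cap[0,A])$, and likewise for $\Delta_0$. By the two-sided application of Theorem~\ref{thmCont} (with $A$ fixed), every $\lambda\in\sigma(k,\Delta_\eps)\cap[0,A]$ lies within $c(A)\,\eps^{1/3}$ of $\sigma(k,\Delta_0)$, and every $\mu\in\sigma(k,\Delta_0)\cap[0,A]$ lies within $c(A)\,\eps^{1/3}$ of $\sigma(k,\Delta_\eps)$. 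The basepoints $-1$ match identically, so the identity map between these two $R$-balls (extended by the obvious correspondence on the matched points) is a $c(A)\,\eps^{1/3}$-Hausdorff approximation. Since $R$ is arbitrary, this is precisely the statement $d_{GH}(\sigma(k,\Delta_\eps),\sigma(k,\Delta_0))\to 0$ as $\eps\to 0$.

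Finally, the argument for the essential spectrum is identical: Theorem~\ref{thmCont} asserts the same estimate for $\sigma_\mathrm{ess}$, and the unpacking into pointed Gromov--Hausdorff distance only uses the distance structure on $\mathbb{R}$. There is no real obstacle here beyond bookkeeping: the only thing worth being careful about is the symmetry step, where one must check that the hypothesis $(1-\eps)g_0\le g_\eps\le(1+\eps)g_0$ really is equivalent, up to replacing $\eps$ by an $O(\eps)$ quantity, to the corresponding statement with $g_0$ and $g_\eps$ interchanged, so that Theorem~\ref{thmCont} can be invoked in both directions with the same smallness parameter.
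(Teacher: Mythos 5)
Your proof is correct and follows the same route as the paper, which simply observes that the corollary is immediate from Theorem~\ref{thmCont} applied with $g_1=g_\eps$. Your added care about the symmetry of $\eps$-closeness and the unpacking of the pointed Gromov--Hausdorff distance just fills in details the paper leaves implicit.
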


The Corollary implies that if for a sequence of $\eps_m\to 0$ there exists a sequence of points $\lambda_{\eps_m} \to \lambda>0$ with the property $\lambda_{\eps_m}\in \sigma_\mathrm{ess}(\Delta_{\eps_m})$ for all  $m$,  then  $\lambda\in \sigma_\mathrm{ess}(\Delta_0)$. In other words, if $\lambda\notin \sigma_\mathrm{ess}(\Delta_0)$, then there exists a $\delta>0$ such that $\lambda\notin \sigma_\mathrm{ess}(\Delta_\eps)$ for all $\eps<\delta$.

When $M$ is a compact manifold the Gromov-Hausdorff convergence of the spectrum implies that for any given $k$, the $k$-th eigenvalues are convergent as $\eps \to 0$. Therefore, we recover Dodziuk's result~\cite{dod}.

\section{A Localization Theorem}\label{s7}

In the remaining sections we be will considering the $k$-form spectrum of manifolds which are asymptotically flat or whose Ricci curvature is asymptotically nonnegative.  Here we will show that the bottom of the essential spectrum of such manifolds is reflected in a sequence of large geodesic balls.

We first introduce the Gromov cover. Let $\Omega\subset \tilde \Omega$ be two open sets of a complete Riemannian manifold $M$. Assume that on $\tilde \Omega$, the Ricci curvature satisfies $\mathrm{Ric}\geq -(n-1) R^{-2}$ for some $R>1$. Assume that
\[
{\rm dist}(\pa\tilde \Omega, \Omega)>0.
\]
Let $0<R< {\rm dist}(\pa\tilde \Omega, \Omega)$.
Let $x_1,\cdots, x_N$ ($N$ may be infinite) be a  set of points in $\Omega$ such that
\[
d(x_i,x_j)\geq R \ \ \text{for all} \ \ i\neq j, \ \  \text{and}  \ \ \bigcup_{i=1}^N B_{x_i}(R) \supset \Omega.
\]
Such a cover, which always exists, is called the Gromov cover.
Let $y$ be a point in the intersection of $s$ of the balls $B_{x_i}(2R)$,
\[
y\in B_{x_{i_1}}(2R) \cap \cdots \cap B_{x_{i_s}}(2R).
\]
Then, since the $B_{x_i}(R/2)$ are disjoint, we have
\[
\sum_{j=1}^s \text{Vol}( B_{x_{i_j}}( R/2) ) \leq \text{Vol}( B_y( 5R/2) ).
\]
At the same time, $B_y( 5R/2)\subset B_{x_{i_k}}( 9R/2)  $ for any $k=1,\cdots, s$ and by the Bishop-Gromov volume comparison theorem (and our Ricci curvature assumption), we get
\[
\text{Vol}( B_{x_{i_k}}( 9R/2) ) \leq C  \; \text{Vol}( B_{x_{i_k}}( R/2) )
\]
for some uniform constant $C=C(n)$, and for any $k=1,\cdots, s$. Combining the above we obtain
\[
\sum_{j=1}^s \text{Vol}( B_{x_{i_j}}( R/2) ) \leq  C  \; \text{Vol}( B_{x_{i_k}}( R/2) )
\]
and therefore $s\leq C=C(n)$.

For each $i=1,\cdots, N$ we define the cut-off functions $0\leq\varphi_i(x)\leq 1$ such that
\begin{equation*}
\begin{split}
& \varphi_i(x)= 1 \ \ \text{whenever  } d(x,x_i)\leq R, \\
& \varphi_i(x)= 0 \ \ \text{whenever  } d(x,x_i)\geq 2R, \ \ \text{and} \\
& |\n \varphi_i| \leq C/R
\end{split}
\end{equation*}
for some uniform constant $C$.

We let
\begin{equation} \label{puni}
\rho_i (x) = \frac{\varphi_i(x)}{\sqrt{\sum_{j=1}^N\varphi^2_j(x)}}
\end{equation}
and observe that $\sum_{i=1}^N \rho_i^2(x) =1$ on $M$ and $ \sum_{i=1}^N \varphi_i^2(x) \geq 1$ for $x\in \Omega$. Since the supports of the $\varphi_i(x)$ cover each point of $\Omega$ at most $C(n)$ times, we can also show that for some uniform constant $C$
\[
|\n \rho_i (x)|\leq   C /R
\]
for all $x\in \Omega$.

Setting $\w_i = \rho_i \w$, we have
\begin{equation*}
\begin{split}
\|\n \w_i\|^2 =  (|\n \rho_i|^2 \w,\w) + 2(\nabla\rho_i \omega, \rho_i\nabla\,\omega)+ \; (\rho_i^2 \,\n \w, \n \w).
\end{split}
\end{equation*}
Since the supports of the $\rho_i$ also cover each point of $\Omega$ at most $C(n)$ time, we have
\begin{equation}\label{2-est}
\sum_{i=1}^N \|\n \w_i\|^2  \leq    \frac {C}{R^2} \, \| \w \|^2 +    \|\n \w\|^2
\end{equation}
for  a uniform constant $C$, where we have used the fact that $\sum_{i=1}^N \rho_i^2=1$ and, as a result, $\; 2 \sum_{i=1}^N \rho_i \n \rho_i = 0$.

Denote by $\lambda_o^\mathrm{ess}(k,\Delta,M)$ the bottom of the essential spectrum of the Laplacian on $k$-forms over $M$.

For any open  subset $\Omega\subset M$ we consider the Friedrichs extension of the Laplacian on $k$-forms over $\Omega$ which we denote by $\Delta$.
Let
\[
\lambda_o(k,\Delta,\Omega)=\inf_{\omega \in \mathcal C_o^{\infty}(\Lambda^k(\Omega))} \frac{(\Delta \omega, \omega)}{\|\omega\|^2}
\]
be the infimum of the Rayleigh quotient for $\Delta$ over compactly supported forms in $\Omega$.

\begin{lem} \label{lemSp2}
Let $M$ be a complete noncompact Riemannian manifold whose Ricci curvature is asymptotically nonnegative at infinity. Let $R>1$  be a positive constant and  $0\leq k\leq n$. Consider the open  subset of $M$
\begin{equation}\label{volume-2}
\Omega_R=\{x\in M\, \big| \, \  \mathrm{Ric} (x) \geq -(n-1) R^{-2}  \,\}.
\end{equation}
Then there exists a point $x\in \Omega_R$  and a uniform constant $C(n)$ such that
\[
\lambda_o (k,\Delta,B_{x}(R)) \leq  \lambda_o^\mathrm{ess}(k,\Delta,M)   +   \frac {C(n)}{R^2}.
\]
We can in fact choose $x$ such that $B_{x}(R)\subset \Omega_R$.
\end{lem}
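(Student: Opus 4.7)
The plan is to combine a Persson-type characterization of $\lambda_o^{\mathrm{ess}}$ with the Gromov covering and localizing partition of unity that has just been constructed, using pigeonhole to select a single ball from the cover.

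First, I would invoke the standard fact that the bottom of the essential spectrum is computed by Rayleigh quotients of test forms supported arbitrarily far out. Explicitly, for every $\epsilon>0$ there is a smooth compactly supported $k$-form $\omega$ with
\[
\frac{(\Delta\omega,\omega)}{\|\omega\|^2}\le \lambda_o^{\mathrm{ess}}(k,\Delta,M)+\epsilon,
\]
whose support lies outside any prescribed compact set. Because $\mathrm{Ric}$ is asymptotically nonnegative, $\Omega_R$ contains $M\setminus K$ for some compact $K=K(R)$, so I may arrange $\mathrm{supp}(\omega)\subset\Omega$ with $\mathrm{dist}(\Omega,\partial\Omega_R)>9R/2$, giving enough buffer for the covering construction.

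Next, I would apply the Gromov cover of the previous paragraphs with $\tilde\Omega\subset\Omega_R$ to obtain points $x_i\in\Omega$ with $B_{x_i}(R)\subset\Omega_R$, together with the partition of unity $\{\rho_i^2\}$ satisfying $\sum_i\rho_i^2\equiv 1$ on $\Omega$, $\mathrm{supp}\,\rho_i\subset B_{x_i}(2R)$, $|\nabla\rho_i|\le C/R$, and uniform bounded overlap. Setting $\omega_i=\rho_i\omega$, it is immediate that $\sum_i\|\omega_i\|^2=\|\omega\|^2$. The heart of the matter is the localization identity
\[
\sum_i(\Delta\omega_i,\omega_i)\le (\Delta\omega,\omega)+\frac{C(n)}{R^2}\|\omega\|^2,
\]
which I would obtain by expanding $d(\rho_i\omega)=d\rho_i\wedge\omega+\rho_i\,d\omega$ and $\delta(\rho_i\omega)=\rho_i\,\delta\omega-i_{\nabla\rho_i}\omega$, squaring, summing, and observing that the cross terms collapse pointwise thanks to $\sum_i\rho_i\,d\rho_i=\tfrac12 d(\sum_i\rho_i^2)=0$. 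What remains is
\[
\sum_i\|d\omega_i\|^2=\|d\omega\|^2+\sum_i\|d\rho_i\wedge\omega\|^2,\qquad \sum_i\|\delta\omega_i\|^2=\|\delta\omega\|^2+\sum_i\|i_{\nabla\rho_i}\omega\|^2,
\]
and the error terms are bounded by $(C/R^2)\|\omega\|^2$ exactly as in \eref{2-est}.

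Combining these two estimates and applying pigeonhole, there must exist an index $i_0$ with
\[
(\Delta\omega_{i_0},\omega_{i_0})\le\Bigl(\lambda_o^{\mathrm{ess}}+\epsilon+\frac{C(n)}{R^2}\Bigr)\|\omega_{i_0}\|^2.
\]
Since $\omega_{i_0}$ is smooth and compactly supported in $B_{x_{i_0}}(2R)\subset\Omega_R$, it is an admissible test form for $\lambda_o(k,\Delta,B_{x_{i_0}}(2R))$. Letting $\epsilon\to 0$ and relabeling $R\mapsto R/2$ (absorbing the factor $4$ into $C(n)$), I obtain a point $x=x_{i_0}\in\Omega_R$ with $B_x(R)\subset\Omega_R$ and the asserted inequality.

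The main obstacle is the cancellation of the cross terms in $\sum_i\|d\omega_i\|^2$ and $\sum_i\|\delta\omega_i\|^2$, which requires a careful but routine use of the identities for the wedge and interior product combined with $\sum_i\rho_i\,d\rho_i=0$; everything else is a standard application of Persson's principle together with the Gromov cover already set up.
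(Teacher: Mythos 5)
Your proposal is correct and follows the same overall strategy as the paper: reduce to test forms supported far out in $\Omega_R$ (the decomposition principle, which the paper uses implicitly via $\lambda_o^{\mathrm{ess}}(k,\Delta,M)=\lambda_o^{\mathrm{ess}}(k,\Delta,\Omega_R)$), apply the Gromov cover and the partition of unity $\{\rho_i^2\}$, establish a localization inequality with error $C(n)R^{-2}\|\omega\|^2$, and pigeonhole to pick one ball. The one place you genuinely diverge is the localization step: the paper passes through the Weitzenb\"ock formula, writing $\sum_i(\Delta\omega_i,\omega_i)=\sum_i\|\nabla\omega_i\|^2+\sum_i(\mathcal W_k\omega_i,\omega_i)$ and invoking \eqref{2-est}, so the curvature terms match up pointwise because $\sum_i\rho_i^2=1$ and $\mathcal W_k$ is a pointwise endomorphism; you instead expand $d(\rho_i\omega)$ and $\delta(\rho_i\omega)$ directly and use the pointwise identity $|d\rho_i\wedge\omega|^2+|\iota_{\nabla\rho_i}\omega|^2=|\nabla\rho_i|^2|\omega|^2$ together with $\sum_i\rho_i\,d\rho_i=0$. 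Both computations hinge on exactly the same cancellation; yours is slightly more elementary in that it never touches the curvature tensor, and it has the minor advantage of working verbatim for the quadratic form $\|d\omega\|^2+\|\delta\omega\|^2$ without invoking Bochner-type identities. You also handle explicitly a point the paper glosses over: the localized forms are supported in $B_{x_i}(2R)$ rather than $B_{x_i}(R)$, which you fix by running the cover at radius $R/2$ and absorbing the factor into $C(n)$; this is the right repair and costs nothing since the curvature hypothesis on $\Omega_R$ is only improved by shrinking the radius.
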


\begin{proof}
Since the Ricci curvature is asymptotically nonnegative,  $\Omega_R$ is  the complement of a compact subset of $M$. Therefore
\[
\lambda_o^\mathrm{ess}(k,\Delta, M)=\lambda_o^\mathrm{ess}(k,\Delta,\Omega_R).
\]

Let $\Omega$ be a compact domain of $\Omega_R$ whose diameter is larger than $R$ and such that $\{ x\in M \ | \ d(x,\Omega) \leq 5R\} \subset \Omega_R$. We take a Gromov cover
\[
\bigcup_{i=1}^N B(x_i,R)\supset \Omega
\]
with respect to $R$. Let $\w$ be any smooth $k$-form with compact support in $\Omega$. Setting $\w_i =\rho_i \w$ as above, we get
\[
\sum_{i=1}^N (\Delta \w_i, \w_i) =  \sum_{i=1}^N \; \|\n \w_i\|^2 + (\mathcal{W}_k \w_i, \w_i)
\]
by the Weitzenb\"ock formula, where $\mathcal{W}_k$ is the Weitzenb\"ock tensor on $k$-forms. Then by ~\eqref{2-est}, we have
\begin{equation}\label{gromov}
\sum_{i=1}^N  (\Delta \w_i, \w_i)  \leq \frac{C}{R^2} \, \| \w \|^2 +    (\Delta \w, \w).
\end{equation}
Therefore,
\[
\inf_i\lambda_o(k,\Delta, B(x_i, R))\,\|\omega\|^2\leq \frac {C}{R^2} \, \|\omega\|^2+\,  (\Delta \w, \w),
\]
and the lemma is proved.
\end{proof}

We will use the above lemma to prove a stronger result for the bottom of the essential spectrum of $M$.
\begin{lem} \label{lem7b}
Let $M$ be a complete Riemannian manifold whose Ricci curvature is asymptotically nonnegative at infinity. Then there exists a sequence of pairs $S=\{(x_i,R_i)\}$ with $x_i \in M$ and $R_i \in \mathbb{R}^+$ satisfying
\begin{enumerate}
\item
$x_i, R_i   \to \infty  \ \ \text{as}  \ \ i\to \infty$,
\item
$\text{the geodesic balls} \ \  B_{x_i}(R_i) \ \ \text{are disjoint, and}$
\item
$\mathrm{Ric} \geq  - (n-1)R_i^{-2} \ \ \text{on} \,\,B_{x_i}(R_i)$.
\end{enumerate}
Then for any $R_i'<R_i$, $R_i'\to\infty$, we have
\[
\lambda_o^\mathrm{ess}(k,\Delta,M) =  \lim_{i\to \infty} \lambda_o (k,\Delta,B_{x_i}(R_i')).
\]
\end{lem}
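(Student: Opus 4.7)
Let $\alpha := \lambda_o^{\mathrm{ess}}(k,\Delta,M)$. My plan is to construct the sequence $(x_i,R_i)$ inductively via Lemma~\ref{lemSp2}, and then establish the two-sided bound
\[
\alpha\leq \liminf_i \lambda_o(k,\Delta,B_{x_i}(R_i')) \leq \limsup_i \lambda_o(k,\Delta,B_{x_i}(R_i'))\leq \alpha.
\]
The lower bound will come from the decomposition principle together with the disjointness (2); the upper bound will require re-applying the Gromov-cover partition estimate of Lemma~\ref{lemSp2} inside each $B_{x_i}(R_i)$ at the finer scale $R_i'$.

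For the inductive construction, suppose $(x_j,R_j)$ have been chosen for $j<i$. Let $K_{i-1}\subset M$ be a compact set that contains $\bigcup_{j<i}\overline{B_{x_j}(2R_j)}$ together with a large metric ball around a fixed basepoint, taken large enough that $M\setminus K_{i-1}\subset \Omega_{R_i}$, where $\Omega_R$ is as in \eqref{volume-2}; this is possible because $\mathrm{Ric}$ is asymptotically nonnegative, so $\Omega_R$ has compact complement for every $R>0$. By the decomposition principle for the form Laplacian, $\alpha=\lambda_o^{\mathrm{ess}}(k,\Delta,M\setminus K_{i-1})$. Applying Lemma~\ref{lemSp2} with radius $R_i\geq i$, carried out with a Gromov cover whose centers lie in $M\setminus K_{i-1}$, produces $x_i$ with $B_{x_i}(R_i)\subset \Omega_{R_i}\setminus K_{i-1}$ and
\[
\lambda_o(k,\Delta,B_{x_i}(R_i))\;\leq\;\alpha+\frac{C(n)}{R_i^2}.
\]
Properties (1)--(3) then hold by construction.

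The lower bound follows immediately: fix any compact $K\subset M$; since $K_{i-1}\supset K$ for all sufficiently large $i$, we have $B_{x_i}(R_i')\subset B_{x_i}(R_i)\subset M\setminus K$, so domain monotonicity gives $\lambda_o(k,\Delta,B_{x_i}(R_i'))\geq \lambda_o(k,\Delta,M\setminus K)$. Taking the supremum over $K$ and invoking the decomposition principle $\sup_K\lambda_o(k,\Delta,M\setminus K)=\alpha$, one obtains $\liminf_i \lambda_o(k,\Delta,B_{x_i}(R_i'))\geq \alpha$.

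The upper bound is the main obstacle, since domain monotonicity runs the wrong direction for sub-balls: one only knows $\lambda_o(k,\Delta,B_{x_i}(R_i'))\geq \lambda_o(k,\Delta,B_{x_i}(R_i))$, so the construction's control at radius $R_i$ does not automatically propagate down to $R_i'$. My strategy is to re-run the partition-of-unity estimate \eqref{2-est}--\eqref{gromov} a second time inside each $B_{x_i}(R_i)$ at the finer scale $R_i'$: cover $B_{x_i}(R_i/2)$ by a Gromov net $\{B_{z_j}(R_i')\}$ whose centers include $x_i$, take an approximate minimizer $\omega$ of the Rayleigh quotient on $B_{x_i}(R_i)$, and set $\omega_j=\rho_j\omega$. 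The same Weitzenb\"ock computation that produced \eqref{gromov} yields
\[
\sum_j \lambda_o(k,\Delta,B_{z_j}(R_i'))\,\|\omega_j\|^2\;\leq\;(\Delta\omega,\omega)+\frac{C(n)}{(R_i')^2}\|\omega\|^2,
\]
forcing some net center $y_i$ to satisfy $\lambda_o(k,\Delta,B_{y_i}(R_i'))\leq \alpha+C(n)/R_i^2+C(n)/(R_i')^2+o(1)$. The delicate step I expect to be the main difficulty is arranging $y_i=x_i$ uniformly in the choice of sub-radius sequence $R_i'$; I plan to handle this by a diagonal refinement of the inductive construction, enumerating a countable dense family of candidate sub-radius sequences $(R_i')$ and adjusting the choice of $x_i$ at each stage so that it serves as a good Gromov-net center at every relevant scale simultaneously. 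Together with the lower bound, this yields $\lim_i \lambda_o(k,\Delta,B_{x_i}(R_i'))=\alpha$.
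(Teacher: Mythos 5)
Your construction of the sequence (Lemma~\ref{lemSp2} applied inside $\Omega_{R_i}\setminus K_{i-1}$) and your lower bound (decomposition principle plus domain monotonicity) are exactly what the paper does; its entire proof consists of asserting the lower bound ``from the definition'' and claiming the reverse inequality ``follows from Lemma~\ref{lemSp2}.'' For that reverse inequality, your idea of re-running the partition estimate \eqref{gromov} a second time, at scale $R_i'$ inside $B_{x_i}(R_i)$, is also the right move: it legitimately produces \emph{some} net center $y_i\in B_{x_i}(R_i)$ with $\lambda_o(k,\Delta,B_{y_i}(R_i'))\leq \lambda_o^{\mathrm{ess}}(k,\Delta,M)+C(n)R_i^{-2}+C(n)(R_i')^{-2}$ (modulo the routine point that the cut-offs live on the doubled balls, so the net should be taken at scale $R_i'/2$). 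You have correctly located the crux, which the paper's one-line proof does not engage with at all.

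The gap is your final step, where you try to force $y_i=x_i$ by a ``diagonal refinement over a countable dense family of candidate sub-radius sequences.'' As written this is a plan, not an argument, and I do not see how to complete it. The set of admissible sequences $R_i'$ is uncountable, and the only mechanism for passing from a countable subfamily to all of them is the monotonicity of $r\mapsto\lambda_o(k,\Delta,B_x(r))$; that reduces the problem to countably many \emph{fixed} scales $r_m\to\infty$, but you would still need a single point that is a good Gromov-net center at all of $r_1,\dots,r_i$ simultaneously at stage $i$. Iterating the cover argument scale by scale only controls the smallest scale reached, because when you pass back up to a larger sub-ball centered at the same point, domain monotonicity again runs the wrong way. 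Note that the paper itself does not prove the fixed-center statement either, and in its applications it does not use it: in Section~\ref{8} it shrinks the radius to $R_i'=\delta\eps_i^{-1/2}$ and then invokes Lemma~\ref{lemSp2} to produce \emph{new} centers $y_i$ such that $\{(y_i,R_i')\}$ is again a minimal sequence. So the version of the lemma that is both provable and actually used is the movable-center one: for any $R_i'<R_i$ with $R_i'\to\infty$ there exist $y_i\in B_{x_i}(R_i)$ with $\lambda_o(k,\Delta,B_{y_i}(R_i'))\to\lambda_o^{\mathrm{ess}}(k,\Delta,M)$, which is precisely what your second Gromov-cover step establishes. Drop the diagonalization and state the conclusion with $y_i$ in place of $x_i$ (equivalently, let the sequence $S$ depend on the chosen $R_i'$); as it stands, that last step is the one genuinely unproved point in your write-up.
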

\begin{proof}
From the definition, it is clear that
\[
\lambda_o^\mathrm{ess}(k,\Delta,M) \leq  \lim_{i\to \infty} \lambda_o (k,\Delta,B_{x_i}(R_i')).
\]
The reverse inequality follows from Lemma \ref{lemSp2}.
\end{proof}

\begin{Def} The sequence $S$ is called a minimal sequence. \label{defMin}
\end{Def}

\begin{remark}
A minimal sequence plays a very important role in locating the essential spectrum of the manifold $M$ and it in fact contains all spectral information for manifolds with computable spectrum. By its definition, the minimal sequence is not unique. In particular, by Lemma~\ref{lem7b}, there exists a lot of flexibility in choosing the  sequence $R_i'<R_i$ which can be made to go to infinity very slowly.
\end{remark}

\begin{remark}  In spite of what we had previously anticipated, the analogous result on manifolds with Ricci curvature bounded below seems not to be true, even in the case of the function spectrum. Let $M_i$ be a sequence of compact hyperbolic spaces whose diameters go to infinity. We construct a complete manifold by joining consecutive hyperbolic  manifolds in this sequence using hyperbolic necks (cf. ~\cite{SchoTr}). It is clear that the first Dirichlet eigenvalue of a ball of radius $R$ satisfies
\[
\lambda_o(B_{x_i}(R))\geq \frac 14 (n-1)^2
\]
for any fixed large number $R>0$. On the other hand, the infimum of the essential spectrum of the manifold is zero, by considering the constant function on the large compact hyperbolic manifold with two small necks attached.
\end{remark}

\section{The Spectrum of Asymptotically Flat Manifolds}\label{7}

In this section,  we consider the Gromov-Hausdorff limit of a minimal sequence on an asymptotically flat manifold $M$ and prove that this sequence captures the essential spectrum of the manifold.   In general, the limit space of these geodesic balls need only be a metric space and not necessarily smooth. However, the asymptotic flatness of $M$ gives us the flexibility to find a cone at infinity with mild singularities. We can then use the spectrum of the limit space to compute the spectrum of $M$.

Let $S=\{(x_i,R_i)\}$ be a minimal sequence for $M$ as in Lemma \ref{lem7b}. By asymptotical flatness we may assume that the curvature of these balls decays to zero at such a rate so that  the exponential map at $x_i$ is nondegenerate within the balls {$B_{x_i}(2R_i)$} (see Proposition 4.13 in \cite{CE}).  Let $I(x)$ denote the injectivity radius of a point $x$ in $B_{x_i}(R_i)$. Given the curvature bound on $B_{x_i}(2R_i)$ we apply  \cite{CLY}*{Corollary 1} with $d\leq 2R_i$ and $T=d/2$ to obtain
\begin{lem} \label{Linj}
Let $S=\{(x_i,R_i)\}$ be a minimal sequence of geodesic balls in $M$. Then there exists a universal constant $C$, such that
\[
I(x) \geq C \, I(x_i)
\]
for any point $x\in B_{x_i}(R_i)$.
\end{lem}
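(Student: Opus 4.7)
The plan is to invoke the injectivity radius comparison in Corollary 1 of \cite{CLY} directly, so the proof reduces to verifying its hypotheses uniformly along the minimal sequence. The two inputs needed are: (i) a two-sided sectional curvature bound on the larger ball $B_{x_i}(2R_i)$, and (ii) nondegeneracy of the exponential map based at $x_i$ on $B_{x_i}(2R_i)$. Input (ii) is precisely what was arranged in the paragraph preceding the lemma (using Proposition~4.13 of \cite{CE}), while (i) follows from the asymptotic flatness of $M$ together with the fact that the balls in $S$ lie at infinity, so the curvature on $B_{x_i}(2R_i)$ can be taken bounded by any prescribed small constant for all sufficiently large $i$.

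Given these hypotheses, I would apply Corollary~1 of \cite{CLY} with the parameters $d = 2R_i$ and $T = d/2 = R_i$. The estimate provided there produces a lower bound for $I(x)$ at any point $x$ in the concentric ball $B_{x_i}(d - T) = B_{x_i}(R_i)$ in terms of $I(x_i)$, the curvature bound, the distance $T$ from the boundary, and the dimension. Because $T = R_i \to \infty$ while the curvature bound on $B_{x_i}(2R_i)$ tends to zero, the only remaining scale controlling $I(x)$ is $I(x_i)$ itself, and the inequality reduces to $I(x) \geq C\, I(x_i)$ for a constant $C$ depending only on $n$.

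The main point to verify is that the constant $C$ can be chosen uniformly in $i$, i.e.\ that it is genuinely universal. This is where the asymptotic flatness is used in an essential way: as $i \to \infty$, the curvature bound on $B_{x_i}(2R_i)$ can be made arbitrarily small, so the curvature-dependent factors in the \cite{CLY} estimate converge to their flat values and may be absorbed into a single dimensional constant (at the cost of restricting to $i$ large, which is harmless for proving the injectivity radius inequality since one can separately adjust the constant for finitely many indices). This is the only subtle step; once it is handled, the conclusion $I(x) \geq C\, I(x_i)$ on $B_{x_i}(R_i)$ follows immediately from the cited corollary, and no further geometric argument is needed.
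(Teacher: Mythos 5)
Your proposal is correct and takes essentially the same route as the paper: the paper likewise obtains the lemma by applying Corollary~1 of \cite{CLY} with $d\le 2R_i$ and $T=d/2$, using asymptotic flatness for the curvature bound on $B_{x_i}(2R_i)$ and Proposition~4.13 of \cite{CE} for the nondegeneracy of $\exp_{x_i}$. Your extra discussion of why the constant is uniform in $i$ merely makes explicit what the paper leaves implicit.
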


There are two distinct cases of convergence: the noncollapsing case (when the injectivity radius at $x_i$ is uniformly bounded below), and the  collapsing case (when the injectivity radius at $x_i$ goes to zero). We will study these two cases separately, and compute the $k$-form essential spectrum in each one. In the noncollapsing case there is a distinction among two subcases, depending on how large the injectivity radius becomes in the limit. This gives us more precise information about the spectrum.  In this section we will prove the noncollapsing cases of Theorem \ref{mainthm}. First we recall the following result from \cite{ChLu5}. For a compact manifold $S^{n-s}$ denote by $\lambda_S(l)$
the smallest eigenvalue of the Laplacian $\Delta_S$ on $l$-forms, for $0\leq l \leq n-s$. Note that  $\lambda_B(l)= \lambda_B(n-s-l)$.
\begin{Def} \label{lem81} Define $\alpha(B,s,n,k)=0$ when $s\geq n/2$, or when $s<n/2$  and  either $0\leq k\leq s$, or $n-s\leq k\leq n$. Define $\alpha(B,s,n,k)=\min\{\lambda_B(k-l)\mid 0\leq l\leq s\}$, when
$s+1\leq k\leq n/2$, and $\alpha(B,s,n,k)=\alpha(B,s,n,n-k)$, when $n/2< k\leq n-s-1$.
\end{Def}
As we proved in \cite{ChLu5}, the product manifold  $M^n=S^{n-s}\times \mathbb{R}^s$  has empty point spectrum and
\[
\sigma(k,\Delta, M)=\sigma_{\rm ess}(k,\Delta, M)=[\alpha(S,s,n,k),\infty).
\]
Moreover, we showed the following.
\begin{thm}[\cite{ChLu5}] \label{thmF2}
Let $X=\mathbb{R}^n/\Gamma$ be a flat noncompact Riemannian manifold. Then, there exists a  compact flat manifold  $S$  of dimension $n-s$  such that
\[
\sigma(k,\Delta,{\R^n}/{\Gamma}) =\sigma_{\rm ess}(k,\Delta,{\R^n}/{\Gamma}) = \sigma_{\rm ess} (k, \Delta, S^{n-s}\times  \R^{s})  = [
\alpha(S,s,n,k), \infty).
\]
\end{thm}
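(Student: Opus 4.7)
The plan is to reduce the computation for $X = \R^n/\Gamma$ to the product case $S^{n-s}\times\R^s$ already handled in \cite{ChLu5}, via the Bieberbach structure of $\Gamma$ combined with a covering/symmetrization argument.

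First I would invoke Bieberbach's first theorem: $\Gamma$ contains a normal subgroup $\Gamma_0$ of finite index consisting of pure translations, with $\Gamma_0\cong \mathbb{Z}^{n-s}$ for some $0\leq s\leq n$ (the case $s=0$ being the compact one, which is excluded by assumption). After an orthogonal change of coordinates the $\R$-span of $\Gamma_0$ is $V = \R^{n-s}$ with orthogonal complement $V^\perp = \R^s$. Setting $S := V/\Gamma_0$ produces a compact flat manifold of dimension $n-s$, the intermediate cover $\hat X := \R^n/\Gamma_0$ is isometric to $S\times\R^s$, and $X = \hat X/F$ with the finite quotient $F := \Gamma/\Gamma_0$ acting freely by isometries on $\hat X$. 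This identifies the compact flat $S$ appearing in the theorem statement.

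Next I would establish spectral inclusion in both directions between $X$ and $\hat X$. For $\sigma(k,\Delta,X)\subset \sigma(k,\Delta,\hat X)$, any compactly supported approximate eigenform on $X$ lifts to an $F$-invariant one on $\hat X$ with the same eigenvalue, and since $|F|<\infty$ weak-nullity is preserved, yielding also $\sigma_{\rm ess}(k,\Delta,X)\subset \sigma_{\rm ess}(k,\Delta,\hat X)$. For the reverse containment I take $\lambda\in \sigma_{\rm ess}(k,\Delta,S\times\R^s) = [\alpha(S,s,n,k),\infty)$ (the product case recalled just above Theorem \ref{thmF2}), produce compactly supported approximate eigenforms $\tilde\omega_j$ on $\hat X$ via the classical Weyl criterion, and use the $\R^s$-translation symmetry of $\hat X$ to slide each $\tilde\omega_j$ far along a generic direction in $\R^s$ so that the $|F|$-element orbit of its support becomes pairwise disjoint. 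The symmetrization $\omega_j := \sum_{\gamma\in F}\gamma^*\tilde\omega_j$ is then $F$-invariant, descending to a compactly supported form $\bar\omega_j$ on $X$ for which disjointness of the summands gives $\|(\Delta-\lambda)\bar\omega_j\|/\|\bar\omega_j\| = \|(\Delta-\lambda)\tilde\omega_j\|/\|\tilde\omega_j\| \to 0$. Pushing the centers to infinity along disjoint rays makes the sequence weakly null on $X$, so $\lambda\in \sigma_{\rm ess}(k,\Delta,X)$.

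The absence of point spectrum on $X$ then follows: any $L^2$-eigenform on $X$ lifts to one on $\hat X = S\times\R^s$, but the latter has empty point spectrum as recorded immediately above. Combining the three inclusions with this yields
\[
\sigma(k,\Delta,X) = \sigma_{\rm ess}(k,\Delta,X) = \sigma_{\rm ess}(k,\Delta,S\times\R^s) = [\alpha(S,s,n,k),\infty),
\]
as claimed. The main obstacle is the symmetrization step when $F$ acts by a genuine twist on the product: for instance, the infinite Klein bottle $\R^2/\langle(x,y)\mapsto(x+1,-y)\rangle$ realizes $F = \mathbb{Z}/2$ on $S^1\times\R$ via $(\sigma,y)\mapsto(\sigma+\tfrac12,-y)$, so the product structure is not globally $F$-invariant. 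Writing the general action as $\gamma\cdot(\sigma,y) = (\rho_\gamma(\sigma),A_\gamma y + b_\gamma)$ with $A_\gamma\in O(s)$, freeness of $F$ guarantees that for a generic direction $v\in \R^s$ the quantity $|A_\gamma y_j + b_\gamma - A_{\gamma'}y_j - b_{\gamma'}|$ with $y_j = t_jv$ grows without bound as $t_j\to\infty$ for every $\gamma\neq\gamma'$, so orbit-disjointness of supports can be arranged with $R_j\to\infty$ chosen to grow slowly enough; after that, Corollary \ref{cor21} applies verbatim to the symmetrized sequence.
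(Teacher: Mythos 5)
Your argument breaks at the very first step. Bieberbach's first theorem applies to \emph{crystallographic} groups, i.e.\ discrete cocompact subgroups of $E(n)$ --- which is exactly the case excluded here, since $X=\R^n/\Gamma$ is assumed noncompact. For a general discrete, freely acting $\Gamma\subset E(n)$ the subgroup of pure translations need not have finite index; it can even be trivial while $\Gamma$ is infinite. The standard counterexample is the screw motion $\gamma=(R_\theta,e_3)$ in $\R^3$ with $\theta/2\pi$ irrational: $\Gamma=\langle\gamma\rangle\cong\mathbb Z$ is discrete and acts freely, $\gamma^k=(R_{k\theta},k e_3)$ is never a translation for $k\neq 0$, so $\Gamma\cap\R^3=\{e\}$ has infinite index. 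The quotient is a complete flat noncompact $3$-manifold (a flat $\R^2$-bundle over the soul circle with irrational rotation holonomy) that admits \emph{no} finite cover isometric to $S\times\R^s$, because no finite cover kills the holonomy. Your intermediate cover $\hat X=S\times\R^s$ with $X=\hat X/F$, $F$ finite, therefore does not exist in general, and the entire covering/symmetrization strategy has no starting point. The structure theorem one actually has (Cheeger--Gromoll soul plus Wolf's analysis) presents $X$ as a flat orthogonal $\R^s$-bundle over a compact flat soul, with holonomy image in $O(s)$ that may be infinite and dense; any proof of the theorem must compute the form spectrum of such genuinely twisted bundles directly, which your reduction cannot reach. (The theorem is quoted in the paper from the unpublished reference \cite{ChLu5}, so there is no in-text proof to compare against, but correctness requires handling this case.)

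A secondary, repairable issue: even when $F$ is finite, your ``slide to disjointness'' step needs distinct $\gamma\neq\gamma'$ in $F$ to have distinct actions $y\mapsto A_\gamma y+b_\gamma$ on the $\R^s$ factor. Freeness of $F$ on $\hat X=S\times\R^s$ does not guarantee this: two elements can agree on $\R^s$ and differ only in their action on the compact factor $S$ (e.g.\ $\mathbb Z/2$ acting on $S^1\times\R$ by $(\sigma,y)\mapsto(\sigma+\tfrac12,y)$). For such pairs no translation in $\R^s$ separates the supports, and the symmetrized form can suffer cancellation, destroying the lower bound on $\|\bar\omega_j\|$. One would first have to quotient $S$ by the normal subgroup of $F$ acting trivially on $\R^s$ and re-choose $S$ accordingly before the genericity argument applies.
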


\begin{proof}[Proof of Theorem \ref{mainthm},   Cases (i), (ii):]

By \cite{Fuk1}*{Theorem 6.7} (see also  \cite{Gr1}*{(8.20)}),  whenever we have a convergent sequence of pointed manifolds $(M,x_i)$ with injectivity radius uniformly bounded below and bounded curvature, the limit space $(X,p)$ is a smooth manifold with a $\mathcal C^{1,\alpha}$-metric tensor.
In  Case {\it (i)} the limit space must be the pointed Euclidean space $(\mathbb{R}^n,0)$ endorsed with the  flat metric $g_E$. As a result, one can pull-back test $k$-forms from the Euclidean space into the manifold as in Theorem \ref{thmSpecGH} below, and obtain $\sigma_\mathrm{ess}(k,\Delta, M)=[0,\infty)$.

In Case {\it (ii)},  the limit space is a flat complete noncompact manifold  $(X,p)$.  Similar to Case {\it (i)},  by Theorem  \ref{thmSpecGH} below we get $[\alpha(S,s,n,k), \infty ) \subset  \sigma_\mathrm{ess}(k,\Delta, M)$ for $\alpha(S,s,n,k)$ corresponding to the flat manifold $X$ as given by Theorem~\ref{thmF2}.

Since the sequence $\{(x_i,R_i)\}$ is minimal and the curvature tensor vanishes asymptotically on the geodesic balls $B_{x_{i}}(R_{i})$, by Lemma~\ref{lemSp2}, there exist $k$-forms $\omega_i \in  \mathcal C_o^{\infty} (B_{x_{i}}(R_{i}))$  such that
\[
\lambda_o^{\mathrm{ess}}(k,\Delta,M)=\liminf_i \frac{ \| \n \omega_i\|^2}{\|\omega_i\|^2}.
\]
On the other hand, the convergence of the  $B_{x_{i}}(R_{i})$ to $X$ is in the $\mathcal C^{1,\alpha}$-sense. Therefore the bottom of the Rayleigh quotient of $B_{x_{i}}(R_{i})$ converges to bottom the Rayleigh quotient of $X$ which is $\alpha(S,s,n,k)$. Therefore, $\lambda_o^{\mathrm{ess}}(k,\Delta,M) = \alpha(S,s,n,k)$. This completes the proof of Case {\it (ii)}.
\end{proof}

\begin{thm} \label{thmSpecGH}
Let $(M,g_M)$ and $(X,g_X)$ be two complete noncompact manifolds. Assume that there exists a sequence $\{(x_i,R_i)\}$ with the property that $x_i\to \infty$ and $R_i\to\infty$,  and all the balls $B_{x_i}(R_i)$ are disjoint. Moreover, assume that for a fixed  point  $p\in X$ and a sequence of $\eps_i \to 0$ there exist differentiable maps
\[
f_i : B_{x_i}(R_i) \to B_p(R_i)
\]
such that
\begin{equation} \label{thm7e1}
|(f_i)_*(g_M)-g_X|_{g_X} < \eps_i.
\end{equation}

Then
\[
\sigma(k,\Delta_X) \subset  \sigma_\mathrm{ess}(k,\Delta_M).
\]
\end{thm}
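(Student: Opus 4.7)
The plan is to lift a classical Weyl sequence for $\Delta_X$ at $\lambda$ into a sequence of approximate eigenforms for $\Delta_M$ at $\lambda$ that are pairwise orthogonal, thereby verifying the essential-spectrum criterion of Corollary \ref{cor21}. First, I would fix $\lambda \in \sigma(k,\Delta_X)$ and a small $\eta > 0$, then invoke the classical Weyl criterion together with the density of smooth compactly supported forms in the domain to produce $\omega \in \mathcal C_o^\infty(\Lambda^k(X))$ with $\|\omega\|_X = 1$, $\|(\Delta_X - \lambda)\omega\|_X < \eta$, and $\supp(\omega) \subset B_p(R)$ for some finite $R$. For all $i$ with $R_i > R$, I would set $\omega_i := f_i^*\omega$, extended by zero to $M$. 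The disjointness of the $B_{x_i}(R_i)$ automatically makes $\{\omega_i\}_i$ pairwise orthogonal in $L^2(\Lambda^k(M),g_M)$.

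The heart of the construction is to compare, on $B_{x_i}(R_i)$, the two metrics $g_M$ and $\tilde g_i := f_i^* g_X$; by \eqref{thm7e1} these are pointwise $\eps_i$-close, and a routine pointwise computation transfers this to $C(n,k)\,\eps_i$-closeness of the $L^2$ norms and of the Hodge quadratic forms $\|d\cdot\|^2 + \|\delta\cdot\|^2$ in the sense of Definition \ref{defBd}. Since $f_i : (B_{x_i}(R_i), \tilde g_i) \to (B_p(R_i), g_X)$ is an honest isometry, the pullback intertwines every Hodge operator, so
\[
\tilde\Delta_i \omega_i = f_i^*(\Delta_X \omega), \qquad \|\omega_i\|_{\tilde g_i} = 1, \qquad \|(\tilde\Delta_i - \lambda)\omega_i\|_{\tilde g_i} < \eta.
\]

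To convert these $\tilde g_i$-estimates into the $g_M$-estimates that Corollary \ref{cor21} demands, I would extend $\tilde g_i$ to a smooth metric $g_i'$ on all of $M$ that agrees with $g_M$ off a small neighborhood of $\supp(\omega_i)$. Then $g_i'$ and $g_M$ are $C\eps_i$-close globally, so $\Delta_{g_i'}$ and $\Delta_M$ are $\eps$-close in the sense of Section \ref{S3}. Using identity \eqref{gap0} in conjunction with Lemma \ref{resBd} applied to $H_0 = \Delta_{g_i'}$, $H_1 = \Delta_M$, $\alpha = 1$, $m = 1,2$, I would obtain
\[
\bigl|\bigl((\Delta_M + 1)^{-m}\omega_i,(\Delta_M - \lambda)\omega_i\bigr)_{g_M}\bigr| \leq C(\lambda)(\eta + \eps_i),
\]
together with $\|\omega_i\|_{g_M}^2 = 1 + O(\eps_i)$. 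A diagonal extraction $\eta_j \to 0$, $i_j \to \infty$, followed by normalization, yields a sequence $\psi_j \in \Dom(\Delta_M)$ of unit norm with pairwise disjoint supports (hence orthonormal and weakly zero) satisfying the hypotheses of Corollary \ref{cor21} for $\lambda$; this places $\lambda$ in $\sigma_\mathrm{ess}(k,\Delta_M)$.

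The principal obstacle is exactly the transfer step above. A pointwise PDE-level comparison of $\Delta_M \omega_i$ with $\tilde\Delta_i\omega_i$ would require control on derivatives of $g_M - \tilde g_i$, on which \eqref{thm7e1} provides none. Bypassing this is precisely the design purpose of the generalized Weyl criterion of Section \ref{S3} combined with the resolvent comparison of Lemma \ref{resBd}: both operate at the level of $L^2$ norms and quadratic forms, which are continuous in $C^0$-close metrics, so neither needs any differentiability of the metric comparison, and Corollary \ref{cor21} accepts resolvent-based test data in place of the pointwise Laplacian action required by the classical Weyl criterion.
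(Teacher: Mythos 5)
Your overall architecture (pull back test forms via $f_i$, use disjoint supports for weak convergence to zero, and transfer resolvent pairings with Lemma \ref{resBd} plus Corollary \ref{cor21}) matches the paper's, but there is a genuine gap at the step you describe as ``a routine pointwise computation transfers this to $C(n,k)\,\eps_i$-closeness \dots of the Hodge quadratic forms $\|d\cdot\|^2+\|\delta\cdot\|^2$.'' The $L^2$ norms and the piece $\|d\cdot\|^2$ are indeed controlled by $C^0$-closeness of the metrics, because $d$ is metric-independent and the pointwise inner product and volume form depend on $g$ algebraically. But $\|\delta_g u\|_g^2=\|d(*_g u)\|_g^2$, and the form $*_g u$ changes with $g$; the exterior derivative of the difference $*_{g_1}u-*_{g_0}u$ involves first derivatives of $g_1-g_0$, on which hypothesis \eqref{thm7e1} gives no control whatsoever. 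Two metrics can be $\eps$-close in $C^0$ while their codifferentials, and hence the quadratic forms $Q(u)=\|du\|^2+\|\delta u\|^2$, differ by an arbitrarily large amount. Consequently Definition \ref{defBd} fails for the pair $\Delta_{g_i'},\Delta_M$, and Lemma \ref{resBd} cannot be invoked for the full Laplacian; your entire transfer step collapses at exactly the point you identify as ``the principal obstacle.''

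The paper's proof avoids this by first reducing, exactly as in the proof of Theorem \ref{thmCont}, to the partial operator $\mathcal L^1=\delta d$, whose quadratic form is $\|d\cdot\|^2$ alone and therefore genuinely is $\eps$-close under $C^0$-close metrics; the operator $\mathcal L^2=d\delta$ is then handled by Hodge duality ($\sigma(k,\mathcal L^2)=\sigma(n-k,\mathcal L^1)$), and the spectra are reassembled into the spectrum of $\Delta$ via Lemma \ref{lemSp}, with $\lambda=0$ treated separately. Your proposal never performs this reduction, and it is not optional: it is the mechanism that makes the quadratic-form comparison legitimate. The other deviations from the paper are harmless or even clarifying — starting from a classical Weyl sequence on $X$ rather than resolvent data, and extending $f_i^*g_X$ to a global metric $g_i'$ by a cutoff interpolation so that Lemma \ref{resBd} applies on a single manifold, are both fine and make explicit what the paper leaves terse. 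But as written the argument proves the containment only after you replace $\Delta$ by $\mathcal L^1$ throughout the transfer step and then argue via Lemma \ref{lemSp} and duality.
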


\begin{proof}
As in the proof of Theorem \ref{thmCont}, it will suffice to show that
\[
\sigma(k,\mathcal{L}^1_X) \subset  \sigma_\mathrm{ess}(k,\mathcal{L}^1_M)
\]
where $\mathcal{L}^1_M= \delta_M d_M$ and  $\mathcal{L}^1_X= \delta_X d_X$. We will treat the case $0\in \sigma(k,\Delta_X)$ separately.

Denote the $L^2$ pairing on $(M,g_M)$ by $( \cdot \, , \cdot)_M$  and on $(X,g_X)$   by $( \cdot \,, \cdot)_X$. The respective norms will be denoted by $\|\cdot\|_M$ and $\|\cdot\|_X$.   By Corollary \ref{cor21} to the generalized Weyl Criterion, for any $\lambda\in \sigma(k,\mathcal{L}^1_X), \lambda>0,$ there exists a sequence of compactly supported $k$-forms $\{\psi_j\}_{j\in\mathbb{N}}$ with $\|\psi_j\|_X=1$ such that for $m=1,2$
\begin{equation} \label{thm7e2}
|\,(\,(\mathcal{L}^1_X+1)^{-m}\psi_j,(\mathcal{L}^1_X-\lambda) \psi_j)_X \,|\to 0.
\end{equation}

By considering a subsequence of $\psi_j$ if necessary, we may in fact assume that the support of $\psi_j$ lies in $B_p(R_j)\subset X$. We let $\w_j=(f_j)^*(\psi_j)$, which are $k$-forms  on $ B_{x_j}(R_j)$ with compact support. Similar to the proofs of Theorem~\ref{PropBd} and Lemma \ref{resBd} and using assumption \eref{thm7e1}, we have
\begin{equation*}
\begin{split}
\bigl| (\, (\mathcal{L}_M^1+1)^{-m} (f_j)^*\psi_j,(\mathcal{L}_M^1-\lambda)(f_j)^* \psi_j\,)_M - &(\,(\mathcal{L}^1_X+1)^{-m} \psi_j,(\mathcal{L}^1_X-\lambda) \psi_j\,)_X \bigr| \\
&\leq C\eps_j\|(f_j)^*\psi_j\|_M^2
\end{split}
\end{equation*}
for some constant $C$ independent of $j$.
We then obtain
\begin{equation*}
|\,(\,(\mathcal{L}_M^1+1)^{-m}(f_j)^*\psi_j,(\mathcal{L}_M^1-\lambda) (f_j)^*\psi_j)_M \,|\leq 2C\eps_j \|(f_j)^*\psi_j\|_M^2
\end{equation*}
for all $j$ large enough and $m=1,2$.  Applying Corollary \ref{cor21},  we get $\lambda \in \sigma_\mathrm{ess}(k,\mathcal{L}_M^1)$.

If $0\in \sigma(k,\Delta_X)$, then we can prove directly that $0\in \sigma_\mathrm{ess}(k,\Delta_M)$. This completes the proof.
\end{proof}

\section{The Collapsing Case: Smooth Limit}\label{8}

We now consider Case {\it (iii)} of Theorem \ref{mainthm}.  Let $\{(x_i,R_i)\}$ be a minimal sequence. Let $M_i=B_{x_i}(R_i)$  and denote by $g_i = g\big|_{M_i}$ the restriction of the Riemannian metric to each ball.    Let $\mathcal R_{g_i}$ be the curvature tensor of $g_i$. Assume that the injectivity radius of $M_i$ at $x_i$ tends to zero and the sequence $(M_i,x_i)$  converges to a pointed metric space $(X,p)$.

For precision, we will denote by $(M,x,g,R)$ the geodesic ball of radius $R$ at the point $x$ of the manifold $M$ which is endowed with the metric $g$. To simplify notation we will sometimes use $M$, $(M,x)$, $B_x(R)$, or $M(R)$ to denote the same ball if there is no confusion.

As we have mentioned in the proof of Lemma \ref{Linj}, the curvature assumption on $M_i$ implies that the exponential map at $x_i$
\[
\exp_{x_i}: T_{x_i}M \to M_i
\]
is nondegenerate on vectors of norm at most $R_i$.  We also use $g_i$ to denote the pull back $\exp^*_{x_i} g_i$ of the metric to the tangent space $T_{x_i}(M)$. The pointed manifolds $$( T_{x_i}M,0, g_i, R_i)$$  now have injectivity radius uniformly bounded below.
Let
\[
\Gamma_i = \pi_1(M_i,x_i,g_i,R_i)
\]
denote the local pseudofundamental group. This is defined in \cite{Fuk1} as
\begin{equation*}
\begin{split}
\pi_1 (M_i,x_i,g_i,R_i) & =  \{\gamma: ( T_{x_i}M,0,  g_i, R_i) \to   ( T_{x_i}M,0, g_i, 2R_i) \; \big| \  \gamma \ \text{ is an isometric} \\&  \text{embedding with} \
  \gamma(0) \in ( T_{x_i}M,0,  g_i, R_i),  \ \  \text{and} \ \ \exp_{x_i} \circ \gamma =  \exp_{x_i} \}.
\end{split}
\end{equation*}

By Theorem 6.12 and Sublemma 8.10 in \cite{Fuk1}, a subsequence of $\left(\,( T_{x_i}M,0, g_i, R_i),\Gamma_i \right)$ converges in the equivariant pointed Gromov-Hausdorff topology to $$\left( \,(\mathbb{R}^n, 0, g_E, \infty), G \right),$$  where $G$ is a closed subgroup of $E(n)$, the isometry group of $\mathbb{R}^n$. By definition  there exist equivariant Hausdorff approximations
\begin{equation}\label{collapsing-2}
\tilde f_i:   ( T_{x_i}M,0, g_i, R_i)  \to (\mathbb{R}^n, 0, g_E, \infty)  \quad  \mathrm{and}\quad
h_i:   \Gamma_i  \to G
\end{equation}
such that for any $\eps>0$, and taking $i$ large enough, we get
\[
d(h_i (\gamma) (\tilde f_i (x)), \tilde f_i(\gamma ( x))) < \eps
\]
for all $x \in  T_{x_i}M_i (R_i), \gamma \in \Gamma_i$ satisfying $d(x,x_i), d(\gamma (x), x_i) < 1/\eps$ (see Definition 6.11 in \cite{Fuk1}).
The $h_i$ are not necessarily homomorphisms.

By Lemma 7.9 in \cite{Fuk1} ${(T_{x_i}M(2R_i),0)}/{\Gamma_i}$ is isometric to $B_{x_i}(R_i)$ in $M$. Using Lemma 6.13 of the same paper, we get

\begin{prop} \label{prop3_1}
Let $X =\mathbb{R}^n/G$. The pointed balls $(M_i,x_i,g_i,R_i)$ collapse in the pointed Gromov-Hausdorff distance to $(X, 0 , g_\infty, \infty)$, where $G$ is a closed Lie subgroup of $E(n)$.
\end{prop}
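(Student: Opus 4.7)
The plan is to descend the equivariant pointed Gromov-Hausdorff convergence already established in~\eqref{collapsing-2} to the quotient spaces, and then use the identification of the balls $M_i$ with suitable quotients of their tangent balls. The key ingredient is Lemma~6.13 of~\cite{Fuk1}, which says that if a sequence of pointed spaces with group actions $(Y_i, y_i, \Gamma_i)$ converges equivariantly to $(Y, y, G)$ in the pointed Gromov-Hausdorff sense, then the quotients $(Y_i/\Gamma_i, [y_i])$ converge to $(Y/G, [y])$ in the ordinary pointed Gromov-Hausdorff sense.

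First I would invoke this quotient principle for our sequence. The equivariant approximations $(\tilde f_i, h_i)$ of~\eqref{collapsing-2} pass to $\varepsilon$-approximations between the quotient spaces: for $x, \tilde x \in T_{x_i}M$ lying in the relevant ball, the quotient distance
\[
d_{T_{x_i}M/\Gamma_i}([x], [\tilde x]) = \inf_{\gamma \in \Gamma_i} d(x, \gamma \tilde x)
\]
is matched, up to an error controlled by $\varepsilon$, with the corresponding distance in $\mathbb{R}^n/G$ via the equivariance relation $d(h_i(\gamma) \tilde f_i(x), \tilde f_i(\gamma x)) < \varepsilon$. This yields pointed Gromov-Hausdorff convergence of $(T_{x_i}M(2R_i), 0)/\Gamma_i$ to $(\mathbb{R}^n, 0, g_E)/G$.

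Second I would use Lemma~7.9 of~\cite{Fuk1}, which gives the isometry $(T_{x_i}M(2R_i),0)/\Gamma_i \cong B_{x_i}(R_i) = M_i$ sending the base point $0$ to $x_i$. Substituting this identification into the quotient convergence above, and denoting by $g_\infty$ the metric induced by $g_E$ on $X = \mathbb{R}^n/G$, we obtain
\[
(M_i, x_i, g_i, R_i) \xrightarrow{d_{GH}} (X, 0, g_\infty, \infty),
\]
which is the claimed collapse.

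The main subtlety — and the reason the argument needs Fukaya's machinery rather than just classical Cheeger-Gromov compactness — is the passage from the pseudofundamental groups $\Gamma_i$ (which are only pseudogroups of local isometries, and for which $h_i$ need not be a homomorphism) to a genuine closed Lie subgroup $G$ of $E(n)$ in the limit. This is precisely what Theorem~6.12 and Sublemma~8.10 of~\cite{Fuk1} provide under our bounded curvature hypothesis on $(T_{x_i}M, g_i)$: the limit of the $\Gamma_i$ is an honest closed subgroup of the isometry group of the smooth limit space $\mathbb{R}^n$, hence a Lie subgroup of $E(n)$. Once this structural fact is granted, the remainder of the proof is a formal descent of equivariant Hausdorff approximations to quotients.
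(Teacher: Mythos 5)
Your proposal is correct and follows essentially the same route as the paper: the equivariant convergence of $\bigl((T_{x_i}M,0,g_i,R_i),\Gamma_i\bigr)$ to $\bigl((\mathbb{R}^n,0,g_E,\infty),G\bigr)$ from Theorem~6.12 and Sublemma~8.10 of Fukaya, combined with the isometry $(T_{x_i}M(2R_i),0)/\Gamma_i\cong B_{x_i}(R_i)$ from Lemma~7.9 and the descent-to-quotients principle of Lemma~6.13, is exactly the paper's argument. Your additional remarks on why the $h_i$ need not be homomorphisms and why $G$ is nonetheless a genuine closed Lie subgroup of $E(n)$ are consistent with, and slightly more explicit than, the paper's presentation.
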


Consider a sequence $\eps_i \to 0$ such that $\sqrt{\eps_i} R_i\to \infty $ and  $\eps_i^{-1} \mathcal R_{g_i} \to 0$. When we rescale the metric of $M$ to $\tilde g_i=  \eps_i g_i$  the  ball $(M_i, x_i, g_i, R_i)$ gets mapped to $(M_i,x_i,\eps_i g_i, \sqrt{\eps_i}  R_i)$. We denote the rescaled ball
\[
\tilde M_i =(M_i, x_i, \eps_i g_i, \sqrt{\eps_i}R_i).
\]
The curvature of $\tilde M_i$ is given by $\eps_i^{-1} \mathcal R_{g_i}$, and therefore still vanishes as $i\to \infty$. Similarly, a subsequence of  $(M_i, x_i, \eps_i g_i, \sqrt{\eps_i}R_i)$ collapses to a metric space $(X', x_\infty' , g'_\infty, \infty)$.

Let
\[
\sigma_i: (M_i,x_i,g_i,R_i) \to (M_i,x_i,\eps_i g_i,\sqrt{\eps_i} R_i)
\]
denote the diffeomorphism which corresponds to the above rescale of the metric. Then we have the commutative diagram
\begin{equation}\label{d-1}
\begin{CD}
(M_i,x_i,g_i, R_i)   @>\sigma_i>>   ( M_i ,x_i,\eps_ig_i, \sqrt{\eps_i} \,R_i ) \\
@VVV @VVV\\
(X,x_\infty, g_\infty, \infty) @>>>  (X',x'_\infty,g'_\infty,\infty)
\end{CD}.
\end{equation}
The following result is well-known
\begin{lem}
The blow down of the metric space $X$ exists, and $X'$ is one of its possible blow downs.
\end{lem}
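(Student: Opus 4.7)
The plan is to exhibit $X'$ as the pointed Gromov--Hausdorff limit of the rescaled spaces $(X, p, \sqrt{\eps_{i_k}}\, d_X)$ along a subsequence $i_k\to\infty$; this simultaneously establishes that a blow down of $X$ exists and identifies $X'$ as one of its realizations.

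\emph{Step 1.} From the given pointed GH convergence $(M_i, x_i, g_i) \to (X, p)$, for each fixed radius $r > 0$ extract a Hausdorff approximation between the $g_i$-ball $B_{x_i}(r)$ in $M_i$ and the ball $B_p(r)$ in $X$, whose distortion $\delta_i(r)$ tends to zero as $i\to\infty$. Likewise, from $(M_i, x_i, \eps_i g_i) \to (X', x'_\infty)$ extract approximations on balls of each fixed radius $R$ with distortion $\tilde\delta_i(R) \to 0$.

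\emph{Step 2.} The key metric identity: the $\eps_i g_i$-ball $B_{x_i}(R)$ in $\tilde M_i$ coincides as a point set with the $g_i$-ball $B_{x_i}(R/\sqrt{\eps_i})$ in $M_i$, with all distances uniformly rescaled by the factor $\sqrt{\eps_i}$. Equivalently, the ball of radius $R$ in $(X, p, \sqrt{\eps_i}\, d_X)$ is exactly $B_p^X(R/\sqrt{\eps_i})$ with the rescaled metric. Composing the two approximations from Step~1 (the first after a $\sqrt{\eps_i}$-rescaling of both metrics) yields a comparison map whose distortion estimate is
\[
d_{GH}\!\left((B_p^X(R/\sqrt{\eps_i}),\, \sqrt{\eps_i}\, d_X),\, (B_{x'_\infty}^{X'}(R),\, d_{X'})\right) \;\leq\; \sqrt{\eps_i}\,\delta_i(R/\sqrt{\eps_i}) \,+\, \tilde\delta_i(R).
\]

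\emph{Step 3.} A diagonal selection: for each integer $R\geq 1$, $\tilde\delta_i(R)\to 0$ in $i$, and for each fixed $r$ the quantity $\delta_i(r)\to 0$. Choose $i_k\to\infty$ such that for every $R\leq k$ the right-hand side of the above inequality is less than $1/k$. Along this subsequence, every fixed-radius ball around the basepoint in $(X, p, \sqrt{\eps_{i_k}}\, d_X)$ converges in GH distance to the corresponding ball in $(X', x'_\infty)$; that is, $(X, p, \sqrt{\eps_{i_k}}\, d_X)$ converges in pointed Gromov--Hausdorff topology to $(X', x'_\infty)$, which is the definition of $X'$ being a blow down of $X$.

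\emph{Main obstacle.} The delicate point is arranging the diagonal so that $\sqrt{\eps_{i_k}}\,\delta_{i_k}(R/\sqrt{\eps_{i_k}})\to 0$ even though $R/\sqrt{\eps_{i_k}}\to\infty$. For any prescribed $R$ and $\eta>0$, the pointed GH convergence $M_i\to X$ guarantees $\delta_i(r)\to 0$ at each fixed $r$, so by choosing $i_k$ large enough one can make $\delta_{i_k}(R/\sqrt{\eps_{i_k}})$ smaller than $\eta/\sqrt{\eps_{i_k}}$; the shrinking factor $\sqrt{\eps_{i_k}}$ then damps the resulting error. Verifying the existence of such a subsequence is the only nontrivial input in the argument; everything else is a transcription of the commutative diagram~\eqref{d-1}.
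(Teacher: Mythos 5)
Your reduction of the lemma to the single estimate
\[
\sqrt{\eps_i}\,\delta_i\!\left(R/\sqrt{\eps_i}\right)\;\longrightarrow\;0
\]
is the right way to organize the argument, and you correctly identify this as the crux. But the justification you offer for it is circular and, as stated, false: pointed Gromov--Hausdorff convergence $(M_i,x_i,g_i)\to (X,p)$ controls $\delta_i(r)$ only for each \emph{fixed} $r$, and gives no information when the radius $r=R/\sqrt{\eps_i}$ grows with $i$. ``Choosing $i_k$ large enough'' moves the target: enlarging $i_k$ improves $\delta_{i_k}(\cdot)$ at fixed radius but simultaneously pushes the radius at which you need the approximation out to infinity, and nothing in the hypotheses of abstract pointed GH convergence rules out $\sqrt{\eps_i}\,\delta_i(R/\sqrt{\eps_i})$ staying bounded away from $0$ along the whole sequence (in which case no subsequence helps). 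Indeed, without further input the statement can fail: if the geometry of $M_i$ at distance $\sim \eps_i^{-1/2}$ from $x_i$ has not yet ``converged'' to the geometry of $X$ at that scale --- think of balls that look like a unit cylinder out to distance $i$ and then flare, with $\eps_i\to 0$ much faster than $i^{-2}$ --- the rescaled limit $X'$ sees the flare and is not a blow down of the cylinder $X$.

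What rescues the lemma in this paper's setting, and what your argument never invokes, is the quantitative control built into the choice of $\eps_i$ and the minimal sequence: the curvature satisfies $|\mathcal R_{g_i}|\le\kappa_i$ on all of $B_{x_i}(2R_i)$ with $\eps_i^{-1}\kappa_i\to 0$, and the exponential map is nondegenerate out to radius $R_i$. Working on the tangent spaces $(T_{x_i}M,0,g_i)$ (and descending to $M_i$ and to $X=\mathbb R^n/G$, $X'=\mathbb R^n/G'$ via the equivariant convergence of the pseudofundamental groups $\Gamma_i$), comparison geometry gives a distortion bound of the form $\delta_i(r)\le C\kappa_i r^3$ valid \emph{uniformly for all} $r\le R_i$, whence $\sqrt{\eps_i}\,\delta_i(R/\sqrt{\eps_i})\le CR^3\,\kappa_i/\eps_i\to 0$. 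This radius-uniform estimate is exactly the missing ingredient; once it is in place your triangle inequality and diagonal selection go through. (The paper itself states the lemma as well known and supplies no proof, so there is no written argument to compare against; but the condition $\eps_i^{-1}\mathcal R_{g_i}\to 0$ from Definition~\ref{cone} is doing essential work here, and a complete proof must use it.)
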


To obtain Theorem \ref{mainthm} in the collapsing case it will be necessary to modify the minimal subsequence. Lemma~\ref{lemSp2} will play an essential role to this purpose.

We take $R_i'=\delta\eps_i^{-1/2} (< R_i)$ for some positive number $\delta>0$. By Lemma~\ref{lemSp2}, there exist $y_i\in M_i$ such that $\{(y_i,R_i')\}$ is also a minimal sequence. To simplify notation we will still  denote $y_i$ by $x_i$. Then ~\eqref{d-1} becomes
\begin{equation}\label{d-2}
\begin{CD}
(M_i,x_i,g_i, \delta\eps_i^{-1/2})   @>\sigma_i>>   ( M_i ,x_i,\eps_ig_i, \delta) \\
@VVV @VVV\\
(X,x_\infty, g_\infty, \infty) @>>>  (X',x'_\infty,g'_\infty,\delta )
\end{CD}.
\end{equation}

The structure of the blow down space $X'$ will be critical in the computation of the spectrum of $M$. We will refer to it as a cone at infinity, in accordance with most literature.
\begin{Def} \label{cone}
Fix $\delta>0$. The metric space $(X',x_\infty', g_\infty', \delta)$ is a cone at infinity for the minimal sequence $(M_i,x_i, g_i, R_i)$ if there exists a sequence $\eps_i\to 0$  such that $\eps_i^{-1}\mathcal{R}_{g_i} \to 0$ and the sequence  $(M_i,x_i, \eps_i g_i, \delta)$  converges in the pointed Gromov Haussdorff sense to $(X',x_\infty', g_\infty', \delta).$
\end{Def}

In this section we assume that  the pointed metric space $(X',x_{\infty}', g_\infty',\delta)$ is smooth. Moreover, by shrinking $\delta$ if necessary, we may assume that the injectivity radius of $x'_\infty$ at $X'$ is bigger than $\delta$. In particular, $(X',x_{\infty}', g_\infty',\delta)$ is diffeomorphic to a small  open ball of $\mathbb R^n$.\footnote{Here we need to use Lemma~\ref{lemSp2}  again.}

By the smoothing  theorems of Cheeger-Gromov and Abrech,  we may assume that the metrics  $\eps_ig_i$  on $(M_i,x'_i,\eps_ig_i, \delta)$  and  $g'_\infty$  on $(X',x_\infty',g'_\infty,\delta)$ are $A$-regular as in \cite{CFG}*{Theorem 1.12}.  In particular, the higher order derivatives of the curvature of $(X',x_\infty',g'_\infty,\delta)$ are bounded. As a result,  the curvature of $X'$ is also bounded.

Let
\[
f_i: (M_i,x_i, \eps_i g_i, \delta) \to (X', x_\infty' , g'_\infty, \delta)
\]
be the smoothing fibrations defined in~\cite{CFG}*{Theorem 2.6}. We can in fact require that the functions $f_i$ satisfy (2.6.1) through (2.6.7) in~\cite{CFG}*{page 338}. In particular,
\begin{equation}\label{covariantdev}
|\n^k f_i | \leq C(k)
\end{equation}
for any $k$, independent of $i$.

Define the maps
\begin{equation}\label{19-1}
F_i: (T_{x_i}M_i,0, \eps_i g_i, \delta) \to (X', x_\infty' , g_\infty', \delta)
\end{equation}
via composition with the  exponential map   $\pi=\exp_{x_i} : T_{x_i}M_i \to M_i$. Note that $F_i(x)=\tilde f_i(\eps_i\,x)$, where the $\tilde f_i$ are defined in~\eqref{collapsing-2}.

\begin{remark} Even though the curvatures of $T_{x_i}M_i$ tend to zero, the limit space $X'$ need not be flat.  This can be seen in the following example.  Let $\mathbb{R}^2 \times \mathbb{R}^1$ be the flat 3-dimensional Euclidean space, which we identify with $\mathbb{C}^1\times \mathbb{R}^1$. Consider the $\mathbb{Z}$-action on $\mathbb{C}^1\times \mathbb{R}^1$ given by
\[
(z,x) \mapsto (e^{i n} z,   n + x)
\]
for $n \in \mathbb{Z}$. Let
\[
M_\eps= {{\mathbb{C}^1\times \mathbb{R}^1}/{\eps \mathbb{Z}}  }.
\]
Then $M_\eps$ converges in the Gromov-Hausdorff sense to ${{\mathbb{C}^1\times \mathbb{R}^1}/{ \mathbb{R}}   } = \mathbb{R}^2$ endowed with the metric
\[
g= d r^2 + \frac{r^2}{r^2 +1} \; d\theta^2
\]
written in polar coordinates, which has strictly positive curvature.

\end{remark}

We will now prove that the spectrum on the minimal sequence can be computed as if these balls were the product of Euclidean balls with almost flat manifolds. Although the proof is quite technical, the idea itself is simple. By the classical O'Neill Theorem for a Riemannian submersion, the curvatures of the two Riemannian metrics, the second fundamental form of the fibers, and the curvature of the horizontal distribution of the fibrations are all related. In particular, whenever three of the above quantities are small, then so is the fourth one. In our setting, by Cheeger-Fukaya-Gromov theory the fibrations are almost submersive. Moreover, after rescaling the metrics $\eps_i g_i$ back to $g_i$, both the curvature of $M_i$ and that of $X'$ are sufficiently small. As a result, the curvature of the horizontal distribution of the fibration is also small, proving that the fibrations are almost product.

To simplify notation we will suppress $i$ below.
Fix a point $p\in  M(\delta)$ and let $p_\infty=F(p)$.  Let $q=\dim X'$. Note that $q<n$. We let $(x_1, \cdots , x_n)$ and $(y_1, \cdots, y_q)$ be harmonic coordinate systems at  $p$ and $p_\infty$ respectively. Without loss of generality, we assume that $p=p_\infty=0$. Denote by $D^k$ the ordinary $k$th order derivatives with respect to these coordinates. Then from ~\eqref{covariantdev} and the harmonicity of the coordinate systems, we have
\[
|D^l F | \leq \tilde C(l),
\]
where $\tilde C(l)$ are constants depending only on $l$.

By using a linear map at $T_x  M$ we may assume that
\begin{equation}\label{17}
\begin{split}
&\frac{\p y_\beta}{\p x_\alpha} (0) = \delta_{\alpha \beta} \ \ \text{for} \ \ 1\leq \alpha, \beta \leq q \\
&\frac{\p y_\beta}{\p x_\alpha} (0) = 0 \ \ \text{for} \ \  \alpha > q.
\end{split}
\end{equation}

By the implicit function theorem, there exists a neighborhood $\mathcal{O}$ of $0\in T_x  M$  on which the coordinates $x_{q+1}, \cdots, x_n$ are local coordinates of the fiber $F^{-1}(0)$. Moreover, since the derivatives of $F$ are bounded then, by shrinking  $\delta$ if necessary, we have
\begin{enumerate}
\item for each $t\in X'(\delta)$, $(x_{q+1},\cdots,x_n)$ is a local coordinate system of the fiber $F^{-1}(t)$ near $p=0$.  Here we use $X'(\delta)$ to denote $(X', x_\infty', g_\infty',\delta)$ for short;
\item On each fiber $F^{-1}(t)$, the derivatives of $x_1,\cdots,x_q$ with respect to $x_{q+1},\cdots, x_n$ are bounded independently of $i$.
\end{enumerate}

Let $V= \sum_{\beta =1}^q b_\beta \frac{\p}{\p y_\beta}$ be a vector field on $X'(\delta)\subset X'$. Then there exists a unique lift $\tilde V$ on both $(M,x,\eps g, \delta)$ and $(T_x M, 0,\eps g, \delta)$  such that
\begin{equation}\label{lift}
\begin{split}
& dF(\tilde V) =V \ \ \text{and} \\
& \tilde V \ \ \text{is normal to the fibers}.
\end{split}
\end{equation}
To simplify notation, we do not distinguish the vector fields on $(M,x,\eps g, \delta)$ and $(T_x M, 0,\eps g, \delta)$.

For $\alpha >q$ the vector field $\p/\p x_\alpha$, in the coordinate system $(x_{q+1},\cdots, x_n)$,  is tangent to the fiber. As a vector field on $T_{x}M$,  it is represented by
\[
\frac{\p}{\p x_\alpha} = \frac{\p}{\p x_\alpha} + \sum_{\beta \leq q} \frac{\p x_\beta}{\p x_\alpha} \frac{\p}{\p x_\beta} := \sum_{\beta =1}^n C_{\alpha,\beta} \frac{\p}{\p x_\beta}.
\]

Let $\tilde V= \sum_{\alpha =1}^n a_\alpha \frac{\p}{\p x_\alpha}$ be the lift. Then by~\eqref{lift}, $\tilde V$ satisfies
\begin{equation*}
\begin{split}
& \sum_{\alpha=1}^n a_\alpha \frac{\p y_\beta}{\p x_\alpha}= b_\beta  \ \ \text{for $\beta\leq q$, and } \\
&  \sum_{\beta,\nu=1}^na_\nu C_{\alpha,\beta} \; g_{\beta \nu} =0 \ \ \text{for} \ \  \alpha > q.
\end{split}
\end{equation*}

We consider the $n\times n$ matrix $(\xi_{\alpha \beta})$ defined by
\begin{equation*}
\xi_{\alpha \beta} = \left\{
\begin{array}{cc}
   \frac{\p y_\alpha}{\p x_\beta} &  \alpha\leq q, \ \beta\leq n \\
\sum_{\nu=1}^n C_{\alpha,\nu} \; g_{\nu \beta} &  \alpha>q, \beta \leq n
\end{array}\right..
\end{equation*}

Then $\xi_{\alpha \beta}(0)$ is nonsingular. By the implicit function theorem (and by shrinking $\delta$ if necessary) the $a_\alpha$, or in other words the lift vector field $\tilde V$, are uniquely determined by  $F$. Moreover, all the derivatives of $\tilde V$ are uniformly bounded.

$X'(\delta)$ is diffeomorphic to a ball of radius $\delta$ in $\R^q$ via the exponential map. Let $\vec b=(b_1,\cdots,b_q)\in\mathbb R^q$ be a vector such that $\|\vec b\|<\delta$. We identify $\vec b$ with the vector field
\[
\sum_{j=1}^q b_j\frac{\pa}{\pa y_j},
\]
on $X'(\delta)$. The unique lift of $\vec b$ is denoted by $\tilde V_{\vec b}^i$ or $\tilde V_{\vec b}$.

Now consider the flow $\tilde \sigma$  defined by the vector field $V_{\vec b}$ on $M$ such that
\[
\frac{d \tilde \sigma}{dt} = \tilde V_{\vec b},  \quad \tilde\sigma(0)\in F^{-1}(0)
\]
and the corresponding flow on $X'$
\[
\frac{d \sigma}{dt} =  \vec b, \quad \sigma(0)=0.
\]
By definition, we have $F\circ\tilde \sigma=\sigma$.
\begin{lem}
By further shrinking $\delta$ if necessary, the flow provides the diffeomorphism
\begin{equation*}
\begin{split}
\nu: F^{-1}(0) \times X'(\delta) & \to F^{-1} (X'(\delta)) \quad  \\
(z,\vec b) & \mapsto \tilde \sigma(1)
\end{split}
\end{equation*}
where $\tilde\sigma$ is the flow of $\tilde V_{\vec b}$  with initial value $\tilde\sigma(0)=z$.
\end{lem}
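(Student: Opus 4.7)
The plan is to verify in turn that $\nu$ is well defined for $\delta$ sufficiently small, that $F\circ\nu$ equals projection onto the second factor, that $\nu$ is bijective, and finally that its differential is everywhere nonsingular. Since all the uniform bounds that enter the argument (on $dF$, on the matrix $\xi_{\alpha\beta}$, and hence on the coefficients of $\tilde V_{\vec b}$) are independent of $i$, the threshold for $\delta$ that emerges will also be independent of $i$, which is what the Cheeger--Fukaya--Gromov setup requires.

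First I would show well-definedness. The lift $\vec b \mapsto \tilde V_{\vec b}$ is linear in $\vec b$, because both defining conditions in \eqref{lift} are linear. Since $(\xi_{\alpha\beta}(0))$ is nonsingular with uniform bounds and the derivatives of $F$ are bounded independently of $i$, the Cramer solution for the $a_\alpha$ and all its derivatives are bounded by constants independent of $i$. Hence $\|\tilde V_{\vec b}\|_{C^1}\leq C\|\vec b\|$ on the region where it is defined. A standard ODE flow-length estimate then gives that for every $z$ in a compact piece of $F^{-1}(0)$ and every $\vec b$ with $\|\vec b\|<\delta$, the integral curve $\tilde\sigma$ of $\tilde V_{\vec b}$ starting at $z$ exists on $[0,1]$ and remains inside a fixed neighbourhood of $0\in T_xM$, provided $\delta$ is small enough.

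Second, the identity $F\circ\tilde\sigma=\sigma$ follows from $dF(\tilde V_{\vec b})=\vec b$ and uniqueness of integral curves on $X'$; evaluating at $t=1$ gives $F(\nu(z,\vec b))=\vec b$, so $\nu$ does map into $F^{-1}(X'(\delta))$. Smoothness of $\nu$ is then smooth dependence of ODE solutions on initial data and parameters. For bijectivity: injectivity follows by applying $F$ to recover $\vec b$, then using that the time-$1$ map of the complete-enough field $\tilde V_{\vec b}$ is a diffeomorphism onto its image; surjectivity follows by the reverse flow, i.e. given $w\in F^{-1}(X'(\delta))$ with $F(w)=\vec b$, one flows backwards along $-\tilde V_{\vec b}$ for unit time and lands, by the same $F$-compatibility, in $F^{-1}(0)$.

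Finally, for $d\nu$ to be an isomorphism at $(z,\vec b)$, split $T_{(z,\vec b)}\bigl(F^{-1}(0)\times X'(\delta)\bigr)=T_zF^{-1}(0)\oplus\mathbb R^q$. On the first summand $d\nu$ is the differential at $z$ of the time-$1$ map of $\tilde V_{\vec b}$, which is the identity when $\vec b=0$ and therefore remains an isomorphism onto the appropriate fibre for $\|\vec b\|$ small; on the second summand, differentiating $F\circ\nu(z,\cdot)=\mathrm{id}$ gives that $dF\circ d\nu=\mathrm{id}$ on $\mathbb R^q$, so $d\nu$ is transverse to the fibres. Putting the two pieces together gives $d\nu$ nonsingular for $\delta$ small. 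The main obstacle is keeping the nonlinear flow close enough to its linearisation at $\vec b=0$ so that the differential estimate at the first step extends from $\vec b=0$ to all $\|\vec b\|<\delta$ with a $\delta$ independent of $i$; this is precisely what the uniform $C^k$ bounds \eqref{covariantdev} on $F$ and the $A$-regularity of the metrics on $(M_i,x_i,\eps_ig_i,\delta)$ and $(X',x_\infty',g_\infty',\delta)$ are designed to give.
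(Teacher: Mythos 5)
Your proposal is correct and follows essentially the same route as the paper, whose proof is a one--line appeal to the uniform (in $i$) bounds on the derivatives of $F$ and $\tilde V_{\vec b}$ together with nondegeneracy; you simply spell out the standard details (existence of the flow on $[0,1]$, the identity $F\circ\nu(z,\vec b)=\vec b$, bijectivity via the backward flow, and nonsingularity of $d\nu$) that the paper leaves implicit.
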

\begin{proof} Given that all the derivatives of $F$ and $\tilde V_{\vec b}$ are bounded (and the map is not degenerate),  we conclude that the map is a diffeomorphism for $\delta$ sufficiently small.
\end{proof}

$F^{-1}(0) \times X'(\delta)$ carries a natural Riemannian product metric. We shall compare this metric to the metric on $F^{-1} (X'(\delta))$ via the diffeomorphism defined in the above lemma.   $F$ is an ``almost'' submersion (as in \cite{CFG}*{page 337}) in the sense that
\[
\|\vec b\|=\|dF(\tilde V_{\vec b})\|\approx \|\tilde V_{\vec b}\|,
\]
and the fiber is ``almost'' totally geodesic. However, these properties are not sufficient to make the metric on
$F^{-1} (X'(\delta))$   ``close'' to the product metric. According to O'Neill's theorem, a submersion with totally geodesic fibers does {\it not} necessarily correspond to a product structure on the manifold, and this obstruction is reflected in the $A$-tensor as defined in \cite{Oneill}. Given the above obstruction, we will use a rescaling argument to find a nearby flat metric to $F^{-1} (X'(\delta))$.

We scale back and reintroduce the subscript $i$. Endow $F^{-1}_{i}(0)$ with the metric that is given by the restriction of $g_i$. Endow $X'(\delta)$ with the metric $\eps_i^{-1} g'_\infty$ and  $F^{-1}_{i}(0) \times X'(\delta)$ with the product metric $\eps_i^{-1} (g_i\times g'_\infty)$.

$X'(\delta)=(X',\infty',g_\infty',\delta)$ is rescaled to the manifold $(X',x'_\infty,\eps_i^{-1} g_\infty', \delta\eps_i^{-1/2})$.

We will study $V_{\vec b}^i$ as $i\to \infty$ on the space $(M_i,x_i,g_i,\delta\eps_i^{-1/2})$.
First, we rescale the coordinate systems to
\begin{equation*}
\begin{split}
&  (x_1, \cdots, x_n) \to (\eps_i^{-1} x_1,  \cdots, \eps_i^{-1} x_n)  \ \ \text{and } \\
&   (y_1, \cdots, y_q) \to (\eps_i^{-1} y_1,  \cdots, \eps_i^{-1} y_q).
\end{split}
\end{equation*}
We also use $(x_1,\cdots, x_n)$ and $(y_1,\cdots,y_q)$ for the new coordinates to simplify notation. We observe that the functions $F_i$ under these two coordinate systems are scaled in the following way
\begin{enumerate}
\item the first order derivatives are not scaled;
\item all of the higher order derivatives tend to zero.
\end{enumerate}
By the above observation, using ~\eqref{17}, we have
\[
\xi_{\alpha \beta} \to I \ \ \text{as} \ \ i\to \infty.
\]

Let $\hat g_i$ be the product metric induced by
$g_i$ on $F_i^{-1}(0)$ and $\eps_i^{-1} g_\infty'$.
The key lemma of  this section is the following
\begin{lem} For any $\eps>0$ and any $\rho\gg 0$, we have
\[
|\nu^*(g_i)/\hat g_i-1|_{\hat g_i}<\eps
\]
on $(M_i,x_i,g_i,\rho)$.
\end{lem}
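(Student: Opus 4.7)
The plan is to estimate the tensor $\nu^*(g_i)-\hat g_i$ block by block in the product coordinates already set up, and show that its sup-norm tends to $0$ as $i\to\infty$ on the fixed ball of radius $\rho$. In the coordinates $(x_{q+1},\dots,x_n)$ on the fiber $F_i^{-1}(0)$ and $(y_1,\dots,y_q)$ on $X'(\delta)$, the product metric $\hat g_i$ is block-diagonal: the vertical block is $g_i$ restricted to $F_i^{-1}(0)$, the horizontal block is $\eps_i^{-1}g'_\infty$, and the cross terms vanish. The task is to compare each of the three blocks of $\nu^*(g_i)$ to the corresponding block of $\hat g_i$.

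For the horizontal block, $\nu_*(\p/\p y_\beta)$ is the horizontal lift $\tilde V_\beta$ transported along the flow of $\tilde V_{\vec b}$. Its squared $g_i$-length equals $\eps_i^{-1}|\p/\p y_\beta|^2_{g'_\infty}$ up to an almost-submersion error, and this error vanishes in the limit because $\xi_{\alpha\beta}\to I$ under the coordinate rescaling normalized in \eqref{17}. For the vertical block, the fiber metric evolves along the flow according to the second fundamental form of the fibers, which is pointwise controlled by $|\n dF_i|$. By \eqref{covariantdev} and the final coordinate rescaling, $|\n dF_i|=O(\eps_i)$ uniformly on the ball of radius $\rho$, so the fiber metric is nearly preserved by the flow for a fixed time interval. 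The main obstacle is the horizontal-vertical cross block: even for a Riemannian submersion with totally geodesic fibers, the total space need not be locally a product, the obstruction being O'Neill's $A$-tensor. Here, however, after rescaling back to $g_i$, the ambient curvature $\mathcal R_{g_i}$ tends to zero by asymptotic flatness, the curvature of the base blow-down is controlled by the smoothing bounds of \cite{CFG}, and $|\n dF_i|$ is $O(\eps_i)$; O'Neill's identities relating the curvatures of the total space, the base, the second fundamental form of the fibers, and the $A$-tensor then force $\|A\|$ to be small, so that the horizontal distribution is almost integrable and the cross terms in $\nu^*(g_i)$ are small.

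With these three block estimates in place, I would integrate the resulting linear ODE governing the evolution of $\nu^*(g_i)$ along the flow over parameter $[0,1]$ and over the ball of radius $\rho$. Each component of the deviation tensor has sup-norm $o_i(1)$, so the total deviation is bounded by $C(\rho)\cdot o_i(1)$, which is less than $\eps$ once $i$ is large. The hardest technical point is to track carefully the combined effect of the two rescalings: first by $\eps_i$ to invoke Cheeger--Fukaya--Gromov theory and produce the approximate submersion $F_i$, then by $\eps_i^{-1}$ to return to the original metric $g_i$ on a ball of large radius, and to verify that the ambient curvature, the higher covariant derivatives of $F_i$, and the matrix $\xi_{\alpha\beta}-I$ all become uniformly small on $(M_i,x_i,g_i,\rho)$. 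This simultaneous smallness is what yields small $A$-tensor, small second fundamental form, and small submersion error in one stroke and completes the argument.
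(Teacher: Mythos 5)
Your strategy is sound but genuinely different from the one the paper uses. You propose to prove the lemma ``by hand'': decompose $\nu^*(g_i)-\hat g_i$ into vertical, horizontal and cross blocks, control the vertical block by the second fundamental form of the fibers (via $|\n^2 F_i|$), control the horizontal and cross blocks by O'Neill's $A$-tensor, deduce smallness of $A$ from O'Neill's curvature identities since both $\mathcal R_{g_i}$ and the curvature of the blow-down are small after the second rescaling, and then integrate the evolution of the pulled-back metric along the flow of $\tilde V_{\vec b}$. This is a rigorous implementation of exactly the heuristic the paper states \emph{before} the lemma, and it would yield effective estimates. The paper's actual proof instead sidesteps all tensor computations by a soft limiting argument: it lifts both $g_i$ and $\hat g_i$ to the local covers $T_{x_i}M_i$ (where they are invariant under the pseudofundamental group and where, thanks to the lower injectivity radius bound and $A$-regularity, they converge in $\mathcal C^\infty$ to flat limits $g_\infty$ and $\hat g_\infty$), and then identifies the two limits by checking that they agree on the fiber, that the lifted fields $V_{\vec b}^\infty$ are orthogonal to the fibers for both, and that their norms agree because the limit $F_\infty$ is an exact Riemannian submersion. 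What the paper's route buys is precisely the avoidance of the two technical points your sketch glosses over: first, $F_i$ is only an \emph{almost} Riemannian submersion, so O'Neill's identities do not apply verbatim and must be rederived with error terms measured by the deviation of $F_i^*g'_\infty$ from the horizontal part of $g_i$ and its derivatives; second, $\nu_*(\p/\p y_\beta)$ is a variation field of the flow rather than the horizontal lift itself, so your ``linear ODE'' must also carry the commutator terms between the lifts $\tilde V_{\vec b}$ for different $\vec b$. Neither point is fatal --- the CFG bounds \eqref{covariantdev} give you what you need --- but they are where the real work in your approach would lie, and you should also track the scaling exponent in your claim $|\n^2F_i|=O(\eps_i)$ carefully, since it depends on whether the norm is taken in $g_i$ or $\eps_i g_i$ on the source and in $g'_\infty$ or $\eps_i^{-1}g'_\infty$ on the target.
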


\begin{proof}
Both $g_i$ and $\hat g_i$ are invariant under the local pseudo fundamental group. Therefore we can compare them over the tangent spaces $T_{x_i}M_i$.
It is obvious that as $i\to\infty$ both limits
\[
g_\infty=\lim_{i\to\infty} g_i, \quad \hat g_\infty=\lim_{i\to\infty} \hat g_i
\]
exist and are flat.  What we need to prove is that
\[
g_\infty=\hat g_\infty.
\]

First, by restricting to $F^{-1}_i(0)$, both $g_\infty$ and $\hat g_\infty$ are the same. It is also not difficult to see that $V_{\vec b}^\infty$ is orthogonal to the fibers on both metrics. Second, by~\cite{CFG}, the maps $F_i$ are ``almost'' submersions which implies that the limit $F_\infty$ is a submersion. Therefore, the norm of $V_{\vec b}^\infty$ is the same with respect to either metric. This concludes the proof.
\end{proof}

In summary, we proved the following
\begin{thm}\label{thm10}
Assume that for some sequence $\eps_i\to 0$  such that $\eps_i^{-1}\mathcal{R}_{g_i} \to 0$, and a fixed $\delta>0$  the limit  $(X',x_\infty', g_\infty', \delta)$ of the sequence $(M_i,x_i, \eps_i g_i, \delta)$ is a  manifold. Then for any $x\in (X',x_\infty', g_\infty', \delta)$, and some  $\delta>0$ the fibration
\[
f_i: (M_i, y_i, g_i,\delta\eps_i^{-1/2})\to (X',x, g_\infty',\delta\eps_i^{-1/2})
\]
is almost product, where $f_i(y_i)=x$.
\end{thm}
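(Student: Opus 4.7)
The plan is to read Theorem~\ref{thm10} as a direct repackaging of the key lemma just established, extended from the distinguished basepoint $(x_\infty',x_i)$ to an arbitrary $(x,y_i)$ with $f_i(y_i)=x$. First I would make precise what ``almost product'' should mean in this setting: exhibiting a diffeomorphism $\nu_x$ from $F_i^{-1}(x)\times(X',x,g_\infty',\delta\eps_i^{-1/2})$ onto a $g_i$-neighborhood of $F_i^{-1}(x)$ in $(M_i,y_i,g_i,\delta\eps_i^{-1/2})$ such that $\nu_x^{*}g_i$ differs from the product metric $\hat g_i^x=g_i|_{F_i^{-1}(x)}\times\eps_i^{-1}g_\infty'$ by a factor $1+o(1)$, uniformly on balls of any fixed (but arbitrarily large) rescaled radius $\rho$.

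At the distinguished basepoint $x=x_\infty'$, $y_i=x_i$, this diffeomorphism has already been constructed: it is the $\nu$ produced by the flow $\tilde\sigma$ of the horizontal lifts $\tilde V_{\vec b}$, and the key lemma $|\nu^{*}g_i/\hat g_i-1|_{\hat g_i}<\eps$ on $(M_i,x_i,g_i,\rho)$ is precisely the almost-product estimate in the rescaled picture, since rescaling $X'(\delta)$ back to the $g_i$-scale multiplies the base metric by $\eps_i^{-1}$ and enlarges its radius to $\delta\eps_i^{-1/2}$. So in this distinguished case there is nothing further to prove.

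To pass to a general $x\in X'(\delta)$, I would rerun the entire construction at the lifted basepoint $y_i=f_i^{-1}(x)$: pick harmonic coordinates $(x_1,\ldots,x_n)$ at $y_i$ and $(y_1,\ldots,y_q)$ at $x$ normalised as in~\eqref{17}, form the matrix $(\xi_{\alpha\beta})$, and use the implicit function theorem to obtain the horizontal lift $\tilde V_{\vec b}$ and the flow diffeomorphism $\nu_x$. The Cheeger--Fukaya--Gromov derivative bounds (2.6.1)--(2.6.7) of~\cite{CFG} on $f_i$, together with the $A$-regularity of both $\eps_ig_i$ and $g_\infty'$, are uniform over $y_i$ in any bounded region of $X'(\delta)$; hence the nondegeneracy of $(\xi_{\alpha\beta})$, the flow construction, and the pseudo-fundamental-group invariance argument of the key lemma all go through verbatim at $y_i$, possibly after shrinking $\delta$ by a uniform amount depending only on the ambient bounds.

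The main obstacle is the uniform validity of the estimate on balls of radius $\rho=\delta\eps_i^{-1/2}\to\infty$, not on fixed-radius neighborhoods, and this is handled exactly as in the key lemma: lifting $g_i$ and $\hat g_i^x$ to $T_{y_i}M_i$ (where both are $\Gamma_i$-invariant), the limits $g_\infty$ and $\hat g_\infty$ are flat, agree tautologically along the fibers of $F_\infty$, and agree on horizontal lifts because $F_\infty$ is a genuine Riemannian submersion (the limit of the CFG almost-submersions) with $\|\tilde V_{\vec b}^\infty\|_{g_\infty}=\|\vec b\|=\|\tilde V_{\vec b}^\infty\|_{\hat g_\infty}$. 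Consequently $g_\infty=\hat g_\infty$, and the almost-product estimate for $f_i$ at the basepoint $x$ follows.
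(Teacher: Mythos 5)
Your proposal is correct and follows essentially the same route as the paper: Theorem~\ref{thm10} is stated there as a summary of the preceding construction, whose harmonic coordinates, horizontal lifts $\tilde V_{\vec b}$, flow diffeomorphism $\nu$, and rescaled comparison $|\nu^{*}(g_i)/\hat g_i-1|_{\hat g_i}<\eps$ are already carried out at an arbitrary point $p\in M(\delta)$ with $p_\infty=F(p)$, which is precisely your ``rerun the construction at $y_i$'' step. Your identification of the key lemma (flat limits $g_\infty=\hat g_\infty$ via agreement on fibers and on horizontal lifts of the limiting submersion) as the heart of the matter matches the paper exactly.
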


When the limit space $X'$ of the rescaled sequence $\tilde M_i$ is smooth then for any $x\in X'$ $f_i^{-1}(x)$ is an almost flat manifold of dimension $n-s$ \cite{CFG}*{Theorem 2.6}\footnote{Possibly, $s$ depends on $i$, but  we can always choose a subsequence.}. We endow  $f_i^{-1}(x)$ with a metric that is given by the restriction of $g_i$ to this fiber. For any $0\leq k\leq n$, define
\[
\lambda_{\rm exp}= \liminf_{i\to\infty}\inf_{x\in X'}
\alpha(f_i^{-1}(x),s,n,k)
\]
to be the \emph{expected} infinium of the essential spectrum, where $\alpha(f_i^{-1}(x),s,n,k)$ is as in Definition~\ref{lem81}. We will show the following
\begin{thm}\label{thm8b} Using the above notations, for any $0\leq k\leq n$, we have
\[
\lambda_{\rm exp}=\lambda^{{\rm ess}}_o(k,\Delta,M).
\]
\end{thm}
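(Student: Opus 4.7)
The plan is to establish the two opposite inequalities $\lambda_o^{\mathrm{ess}}(k,\Delta,M)\leq \lambda_{\rm exp}$ and $\lambda_o^{\mathrm{ess}}(k,\Delta,M)\geq \lambda_{\rm exp}$ separately, using Theorem~\ref{thm10} to reduce everything to the Riemannian product $F\times \mathbb R^s$ where $F$ is a (nearly) flat compact manifold, and then invoke Theorem~\ref{thmF2} on the spectrum of such products.

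\textbf{Upper bound.} Fix $\eta>0$. By the definition of $\lambda_{\rm exp}$, after passing to a subsequence we may choose points $x_i\in X'$ with $\alpha(F_i,s,n,k)\leq \lambda_{\rm exp}+\eta$, where $F_i:=f_i^{-1}(x_i)$ is, by Cheeger--Fukaya--Gromov, a compact almost flat manifold of dimension $n-s$. Pick $y_i\in M_i$ with $f_i(y_i)=x_i$. By Theorem~\ref{thm10}, for any prescribed $\rho>0$ and for all $i$ sufficiently large, the ball $(M_i,y_i,g_i,\rho)$ is $\eta$-close, as a Riemannian manifold, to the product $F_i\times B_{0}(\rho)\subset F_i\times\mathbb{R}^s$. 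On this product, Theorem~\ref{thmF2} (combined with the stability statement of Theorem~\ref{thmCont} to absorb the almost-flatness of $F_i$) gives $\sigma(k,\Delta)=[\alpha(F_i,s,n,k),\infty)$. For any $\lambda\geq \alpha(F_i,s,n,k)$, pick smooth compactly supported approximate eigenforms on $F_i\times\mathbb{R}^s$, and pull them back through the almost-isometry to compactly supported forms in $M_i$. Applying Theorem~\ref{thmSpecGH} (or equivalently the generalized Weyl criterion of Corollary~\ref{cor21}, with the errors coming from the $\eps$-closeness estimated exactly as in the proof of Theorem~\ref{PropBd}) shows that $\lambda\in \sigma_\mathrm{ess}(k,\Delta,M)$; pushing the supports out (by letting $y_i\to\infty$ along the minimal sequence) guarantees the essential, not just point, spectrum. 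Taking infimum yields $\lambda_o^{\mathrm{ess}}(k,\Delta,M)\leq \lambda_{\rm exp}+\eta$, and we let $\eta\to 0$.

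\textbf{Lower bound.} By Lemma~\ref{lem7b}, we may choose a minimal sequence $\{(z_i,R_i')\}$ with $R_i'\to\infty$ and $R_i'\ll \delta\eps_i^{-1/2}$, such that
\[
\lambda_o^{\mathrm{ess}}(k,\Delta,M)=\lim_{i\to\infty}\lambda_o(k,\Delta,B_{z_i}(R_i')).
\]
On each ball $B_{z_i}(R_i')$, Theorem~\ref{thm10} again supplies a diffeomorphism $\nu_i$ to the product $F_i\times B_{x_i}^{X'}(R_i')$ whose metric distortion tends to $0$. Hence, for any smooth compactly supported $k$-form $\omega$ on $B_{z_i}(R_i')$, the Rayleigh quotient differs from that of $\nu_i^{-1*}\omega$ on the product by a factor $1+o(1)$; this is exactly the type of $\eps$-closeness estimate proved in Lemma~\ref{resBd} and Theorem~\ref{PropBd}. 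Consequently
\[
\lambda_o(k,\Delta,B_{z_i}(R_i'))\geq (1-o(1))\,\lambda_o\bigl(k,\Delta,F_i\times B_{x_i}^{X'}(R_i')\bigr).
\]
As $R_i'\to\infty$, the Dirichlet bottom on $F_i\times B_{x_i}^{X'}(R_i')$ monotonically approaches the bottom of the full spectrum of $F_i\times \mathbb R^s$, which by Theorem~\ref{thmF2} is precisely $\alpha(F_i,s,n,k)$. Since $x_i\in X'$ is arbitrary, we obtain $\lambda_o(k,\Delta,B_{z_i}(R_i'))\geq \inf_{x\in X'}\alpha(f_i^{-1}(x),s,n,k)-o(1)$, and passing to the liminf yields $\lambda_o^{\mathrm{ess}}(k,\Delta,M)\geq \lambda_{\rm exp}$.

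\textbf{Main obstacle.} The delicate point is quantifying the ``almost product'' conclusion of Theorem~\ref{thm10} uniformly enough to legitimately pull back and push forward test forms with controlled Rayleigh quotients. In particular, the fiber $F_i$ is only almost flat and varies with $x\in X'$, so to apply Theorem~\ref{thmF2} we must first stabilize the spectrum with respect to small metric perturbations on the fiber, for which Theorem~\ref{thmCont} is exactly the required tool. Once this is in place, both inequalities reduce to careful bookkeeping of error terms under the generalized Weyl criterion.
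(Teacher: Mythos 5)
Your proof is correct and follows essentially the same route as the paper: the inequality $\lambda_o^{\mathrm{ess}}\geq\lambda_{\rm exp}$ via the localization lemma (Lemma \ref{lemSp2}/Lemma \ref{lem7b}) combined with the almost-product structure of Theorem \ref{thm10}, and the reverse inequality by transplanting test forms from the product $f_i^{-1}(x)\times\mathbb{R}^s$ (whose spectral bottom is $\alpha(f_i^{-1}(x),s,n,k)$) back into $M_i$ via the generalized Weyl criterion. Your write-up supplies the bookkeeping that the paper leaves implicit, but no new ideas are needed or introduced.
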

\begin{remark} The definition of $\lambda_{\rm exp}$ depends on the approximation functions $f_i$, therefore  it is not intrinsically defined. The above theorem however, shows us that it is indeed intrinsic, because it is equal to the infimum of the essential spectrum.
\end{remark}

\begin{proof}[Proof of Theorem~\ref{thm8b}]
By Lemma~\ref{lemSp2}, for any $R\gg 0$, there exists a point $y_i\in ( M_i ,x_i, g_i, \delta\eps_i^{-1/2} )$ such that the balls $\bar M_i=( M_i ,y_i, g_i, R )$ satisfy
\[
\lambda_o(k,\Delta_i,\bar M_i)\leq \lambda_o^{\rm ess}(k,\Delta,M)+O(R^{-2})
\]
as $R\to\infty$, where $\delta$ is as in Theorem \ref{thm10}.  This implies that
\[
\lambda_{\rm exp} \leq \lambda^{{\rm ess}}_o(k,\Delta,M).
\]

To prove the  reverse inequality, we choose $x\in X'$. Then in the $\delta$ neighborhood of $x$, for $i\gg 0$, the fibration $f_i$ is almost product in the sense of Theorem~\ref{thm10}. It is thus clear that
\[
\lambda_{\rm exp} \geq\lambda_o^{\mathrm{ess}}(k,\Delta,M)+o(1).
\]
The theorem is proved.
\end{proof}

\section{The Collapsing Case: Singular Limit}\label{9}
The complexity of computing the spectrum in the collapsing case lies in the fact that the singular set of the cone at infinity $X'$ of the rescaled sequence $\tilde M_i$  can be fairly complicated. In this section we will study two interesting cases for which the essential spectrum is computable. In the first case we will suppose that the singular locus of the cone at infinity $X'$ consists of an isolated point. We will prove the following result.
\begin{thm} \label{thm9c}
Let $M$ as in Theorem \ref{mainthm}. Assume that the minimal sequence of geodesic balls is collapsing and that for some sequence $\eps_i\to 0$  as in Definition \ref{cone} the cone at infinity $X'$  has a single singularity point.  Then $\sigma_\mathrm{ess}(k,\Delta_M)$ is either empty, or a connected interval.
\end{thm}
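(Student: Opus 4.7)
The plan is to reduce the theorem to showing that for any $\mu\geq\alpha_k:=\inf\sigma_{\rm ess}(k,\Delta_M)$ (when the essential spectrum is nonempty), one has $\mu\in\sigma_{\rm ess}(k,\Delta_M)$. Since $\sigma_{\rm ess}$ is always closed in $\mathbb{R}$, this will give $\sigma_{\rm ess}(k,\Delta_M)=[\alpha_k,\infty)$, a connected interval. By Lemma \ref{lem7b}, $\alpha_k$ is captured by the minimal sequence $(M_i,x_i,g_i,R_i)$, which after rescaling by $\eps_i\to 0$ converges in the pointed Gromov-Hausdorff sense to the cone at infinity $(X',x'_\infty,g'_\infty,\delta)$ of Definition \ref{cone}. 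By hypothesis, the singular locus of $X'$ consists of a single point $p_*$.

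First, I would choose a smooth point $y\in X'\setminus\{p_*\}$ and, combining Lemma \ref{lemSp2} with Theorem \ref{thm10}, lift it to points $y_i\in M_i$ with $f_i(y_i)=y$ such that the fibration $f_i:(M_i,y_i,g_i,\rho)\to(X',y,g_\infty',\rho)$ is almost product for any prescribed radius $\rho$. Near $y_i$, the metric on $f_i^{-1}(U)$ is arbitrarily close to the Riemannian product of an almost flat fiber $F_i$ (of dimension $n-s$) with the smooth neighborhood $U\subset X'$ of $y$, in the sense of the key lemma of Section \ref{8}.

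For any $\mu>\alpha_k$, I would construct an approximate $\mu$-eigenform $\psi_i$ on $M$ as follows. By Theorem \ref{thm8b}, the almost flat fiber $F_i$ carries an approximate $\lambda^i_{\rm fib}$-eigenform $\omega^i_{\rm fib}$ with $\lambda^i_{\rm fib}\to\alpha_k$. Simultaneously, on $X'\setminus\{p_*\}$, which is smooth, connected and of infinite diameter, one can build plane-wave-type test forms $\omega^i_{\rm base}$ localized in $U$ with Laplace eigenvalue approximately $\mu-\lambda^i_{\rm fib}>0$. Setting
\[
\psi_i=\chi_i\,(f_i^*\omega^i_{\rm base})\wedge\tilde\omega^i_{\rm fib}
\]
for a suitable cutoff $\chi_i$ and the horizontal-vertical lift $\tilde\omega^i_{\rm fib}$, the almost product estimates of Section \ref{8} should give $\|(\Delta_M-\mu)\psi_i\|/\|\psi_i\|\to 0$. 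To secure the weak convergence $\psi_i\rightharpoonup 0$ required by Corollary \ref{cor21}, I would drift the base point $y$ to infinity along $X'\setminus\{p_*\}$ (repeating the above construction for a sequence of base points), relying on the fact that a single puncture leaves a noncompact smooth region available.

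The main obstacle will be verifying that $\Delta_M\psi_i\approx\mu\psi_i$ with the error tending to zero. This requires O'Neill-type identities together with control of the deviation $|\nu^*(g_i)/\hat g_i-1|_{\hat g_i}$ proved in the key lemma of Section \ref{8}, both to show that the base--fiber cross terms in $\Delta_M$ vanish in the limit and to handle commutators with the cutoff $\chi_i$. The single-singularity assumption enters twice: it ensures the existence of smooth base points $y$ where Theorem \ref{thm10} applies with uniform control, and it guarantees that $X'\setminus\{p_*\}$ has sufficient room to accommodate test forms of every nonnegative base frequency and to translate supports to infinity in $M$, thereby producing the full half-line $[\alpha_k,\infty)$.
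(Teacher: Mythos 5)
Your overall architecture --- produce approximate $\mu$-eigenforms for every $\mu\geq\alpha_k$ by wedging fiber eigenforms with base plane waves over the smooth part of $X'$, then invoke closedness of the essential spectrum --- is consistent with how Section \ref{8} handles the smooth-limit case, and the paper's proof of this theorem does ultimately reduce to Section \ref{8}. But your proposal skips the one step that is the actual content of this theorem: justifying that the bottom of the essential spectrum $\alpha_k$ is seen by fibers over \emph{smooth} points of $X'$. The minimal sequence is centered at $x_i$, whose rescaled limit is (after a harmless normalization) precisely the singular point $p_*=x'_\infty$; Lemma \ref{lemSp2} only hands you balls achieving $\lambda_o^{\rm ess}+O(R^{-2})$ \emph{somewhere}, with no guarantee that they avoid the singularity. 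A priori the infimum of the Rayleigh quotient could be concentrated in the region collapsing onto $p_*$, in which case the fibers over your chosen smooth point $y$ would only produce eigenvalues tending to some $\tilde\alpha_k>\alpha_k$, your construction would only yield $[\tilde\alpha_k,\infty)\subset\sigma_{\rm ess}(k,\Delta_M)$, and connectedness would remain open on $[\alpha_k,\tilde\alpha_k)$. Your appeal to Theorem \ref{thm8b} to get fiber eigenforms with $\lambda^i_{\rm fib}\to\alpha_k$ is circular here, since Theorem \ref{thm8b} is established under the hypothesis that the limit $X'$ is smooth.

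The paper closes exactly this gap. It takes the first Dirichlet eigenform $\omega_i$ on $\bar M_i=(M_i,x_i,g_i,\delta\eps_i^{-1/2})$, lifts it to the tangent space, and observes that the lifts converge in the $\mathcal C^\infty$ sense to a nonzero eigenform on $\mathbb{R}^n$, which necessarily has infinite $L^2$ norm; hence for each $m$ there is $i=i(m)$ with $\|\omega_i\|_{\bar M_i}\geq 2\|\omega_i\|_{B_{x_i}(m)}$, i.e.\ the mass of $\omega_i$ escapes every fixed ball about $x_i$. A Gromov cover consisting of $B_{x_i}(m)$ together with balls $B_{y_j}(m/8)$ away from $x_i$, combined with the partition-of-unity estimate \eqref{2-est}, then shows that the balls $B_{y_j}(m/8)$ --- which converge to the smooth part of $X'$ --- already capture $\lambda_o^{\rm ess}$ up to an error $C(n)/m^2+o(1)$. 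Only after this localization does the reduction to Section \ref{8} (and hence your product construction) become legitimate. You need to supply this step, or an equivalent argument ruling out concentration at the singular point, for your proof to go through.
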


\begin{proof}
Without loss of generality we assume that the singular point is $x'_\infty$, the limit of the $x_i$.
Otherwise one can shrink the sequence a little bit.

Consider the  $k$-eigenforms $\omega_i$ on $\overline M_i =( M_i ,x_i, g_i, \delta\eps_i^{-1/2} )$ such that
\[
\Delta_i \omega_i = \lambda_i \omega_i,\quad |\omega_i||_{\pa \overline M_i}=0,
\]
where $\lambda_i\geq 0$ is the first eigenvalue of the Laplacian $\Delta_i$ on  $\overline M_i$.

Let $\tilde \omega_i$ be the lift of $\omega_i$ to  the tangent space $(T_{x_i}\overline M_i, 0,  g_i, \delta\eps_i^{-1/2})$ given via the covering map
\[
(T_{x_i}\bar M_i, 0,  g_i, \delta\eps_i^{-1/2}) \to  \bar M_i
\]
and chose $\w_i$ such that  $\tilde \w_i$ satisfies $\|\tilde \w_i\|_{L^2(T_{x_i}\bar M_i(4))}=1$.

By~\eqref{collapsing-2}, we  have the Gromov-Hausdorff convergence
\[
\tilde f_i: (T_{x_i} \bar M_i, 0,  g_i, R) \to (\mathbb{R}^n, 0, g_E, R)
\]
where $g_E$ is the flat metric on $\mathbb R^n$. Since we assume that the metrics $g_i$ are $A$-regular in the sense of ~\cite{CFG}, the above convergence is  in the $\mathcal C^\infty$-sense.

By passing to a subsequence if necessary, we have that
\begin{equation*}
\tilde \omega_i  \to \tilde \omega_\infty
\end{equation*}
in the $\mathcal C^\infty$-sense. Since $\|\tilde \w_i\|_{T_{x_i}\bar M_i(4)}=1$, we have  $\|\tilde \w_\infty\|_{B(0,4)} =1$, where $B(0,4)$ is the Euclidean ball of radius 4 centered at $0$. Therefore, the limit form $\tilde \w_\infty$ is nonzero. Moreover, $\|\tilde \w_\infty\|_{\mathbb R^n} = \infty$ since $\tilde \w_\infty$ is a nonzero eigenfunction on  $\mathbb R^n$. This implies that
\[
\lim_{i\to \infty} \|\tilde \w_i\|_{T_{x_i}\bar M_i(\delta\eps_i^{-1/2})} =\infty.
\]

As a result, for each $m\in \mathbb{N}$ there exists an $i(m)\to \infty$ such that
\[
\|\tilde \w_i\|_{T_{x_i}\bar M_i(\delta\eps_i^{-1/2})} \geq 2 \|\tilde \w_i\|_{T_{x_i}\bar M_i(m)}.
\]
Let $\Gamma_i$ denote the local pseudofundamental group as in the previous section.  By taking the quotient with the discrete group $\Gamma_i$ we have
\begin{equation} \label{e9_4}
\|\w_i\|_{\bar M_i } \geq 2 \|  \w_i\|_{B_{x_i}(m)}.
\end{equation}

Let $m$ and $i=i(m)$ as above. As in Section \ref{s7} we construct a Gromov cover on $\bar M_i$ but in a slightly different manner; the Gromov cover includes the ball of radius of radius $m$ at $x_i$, $B_{x_i}(m)$, and a finite number of balls  $\{B_{y_j}(m/8)\}_{j=1}^N$ that cover  $\bar M_i\setminus B_{x_i}(m/2)$, such that $y_i \in \bar M_i\setminus B_{x_i}(m/2)$  and each point in $\bar M_i\setminus B_{x_i}(3m/4)$ is covered at most $C(n)$ times. For each $j=1,\ldots, N$ we let $\phi_j$ be the cut-off functions on $B_{y_j}(m/8)$ as in Section \ref{s7}, and  let $\phi_o$ be the cut-off function which is equal to $1$ on  $B_{x_i}(m/2)$ and vanishes outside $B_{x_i}(m)$.  Let $\rho_j$ as in \eref{puni} and assume that
\[
\|\n (\rho_j \w_i)\|^2 \geq \hat{\lambda} \|\rho_j \w_i\|^2
\]
for all  $j=1,\ldots, n$. Since
\[
\|\n (\rho_o \w_i)\|^2 \geq \lambda_o^{\mathrm{ess}} \|\rho_o \w_i\|^2
\]
we have
\begin{equation*}
\begin{split}
\hat{\lambda}  \sum_{j=1}^N  \|\rho_j \w_i\|^2 + \lambda_o^{\mathrm{ess}} \|\rho_o \w_i\|^2
&\leq \sum_{j=1}^N  \|\n (\rho_j \w_i)\|^2 + \|\n(\rho_o \w_i)\|^2 \\
&\leq \left( \frac{C(n)}{m^2} + \lambda_o^{\mathrm{ess}} + o(1) \right) \|\w_i\|^2
\end{split}
\end{equation*}
by equation \eref{2-est} and the fact that we have a minimal sequence.  Since
\[
\|\w_i\|^2 =  \sum_{j=1}^N  \|\rho_j \w_i\|^2 + \|\rho_o \w_i\|^2
\]
we get
\[
\hat{\lambda}  \sum_{j=1}^N  \|\rho_j \w_i\|^2 \leq  \left( \frac{C(n)}{m^2} + \lambda_o^{\mathrm{ess}} + o(1) \right) \sum_{j=1}^N  \|\rho_j \w_i\|^2 + \left( \frac{C(n)}{m^2} +  o(1) \right)  \|\rho_o \w_i\|^2.
\]

On the other hand, \eref{e9_4} implies that
\begin{equation*}
  \begin{split}
\|\rho_o \w_i\|^2& \leq \|\w_i\|_{B_{x_i}(m)} \leq \frac 12 \|\w_i\|_{\bar M_i} = \frac 12 \|\rho_o \w_i\|^2 +\frac 12  \sum_{j=1}^N  \|\rho_j \w_i\|^2 \\
\Rightarrow \|\rho_o \w_i\|^2& \leq \sum_{j=1}^N  \|\rho_j \w_i\|^2.
  \end{split}
\end{equation*}
Applying this estimate to the right side of the inequality above we have
\[
\hat{\lambda}  \sum_{j=1}^N  \|\rho_j \w_i\|^2 \leq  \left( \frac{C'(n)}{m^2} + \lambda_o^{\mathrm{ess}} + o(1) \right) \sum_{j=1}^N  \|\rho_j \w_i\|^2
\]
and we conclude that
\[
\hat{\lambda}   \leq   \frac{C'(n)}{m^2} + \lambda_o^{\mathrm{ess}} + o(1).
\]
In other words we can use the balls $B_{y_j}(m/8)$ to capture the bottom of the essential spectrum. Since  these balls converge to a smooth subset of $X'$, the theorem follows from the results of Section \ref{8}.
\end{proof}

With Theorem \ref{thm9c} we have completed the proof of Theorem \ref{mainthm}.

The second case we consider is when the limit $X$ of the unscaled sequence and a cone at infinity $X'$ have the same dimension and  the diameter of the fibers over $X$  uniformly collapses  as $i\to \infty$.
\begin{Def}
Let $( M_i ,x_i, g_i, \delta\eps_i^{-1/2} )$ be a minimal sequence which converges to $(X,x_\infty, g_\infty, \infty)$  and let $(X',x'_\infty,g'_\infty,\delta )$ be as in Definition \ref{cone}. We say that the minimal sequence {\it is strongly collapsing} if the limit spaces $X$ and $X'$ have the same dimension, the blow down of the singular set of $X$ coincides with the singular set of $X'$, and the diameter of the fibers over $X$  uniformly collapses  as $i\to \infty$.
\end{Def}
We will prove the following.
\begin{thm} \label{thm9b}
Let $M$ as in Theorem \ref{mainthm}. Assume that the minimal sequence of geodesic balls is strongly collapsing. Then $\sigma_\mathrm{ess}(k,\Delta_M)$ is either empty, or $[0,\infty)$.
\end{thm}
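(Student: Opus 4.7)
The containment $\sigma_{\mathrm{ess}}(k,\Delta_M)\subset[0,\infty)$ is automatic, so the content of the theorem is the reverse inclusion under the assumption that the essential spectrum is nonempty. My strategy is to combine the almost-product fibration structure of Section~\ref{8} with Corollary~\ref{cor21}, using the strong collapsing hypothesis to transfer plane-wave type approximate eigenforms from the flat limit $X'$ back up to $M$.

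Let $f_i:(M_i,x_i,\eps_ig_i,\delta)\to(X',x_\infty',g_\infty',\delta)$ be the almost-submersive fibrations from Section~\ref{8}, with fibers $F_i$ of dimension $s=n-\dim X'$. By Proposition~\ref{prop3_1}, $X'=\mathbb{R}^{n-s}/G$ is flat for a closed Lie subgroup $G$. The strong collapsing hypothesis $\dim X=\dim X'$, together with the matching of blow-down singular sets, implies that $X$ is pointed Gromov-Hausdorff asymptotic to $X'$ on arbitrarily large balls, and that the fiber diameters of $f_i$ in the unscaled metric $g_i$ tend uniformly to zero as $i\to\infty$. The first step will be a fiberwise Hodge decomposition: for any approximate $k$-eigenform $\omega$ supported in $B_{x_i}(R_i')$, write $\omega=\omega^h+\omega^\perp$ with $\omega^h$ in the fiberwise $L^2$-kernel; because the nonzero spectrum of each fiber Laplacian is bounded below by $c/\mathrm{diam}(F_i)^2\to\infty$, any form of bounded total Laplacian norm satisfies $\|\omega^\perp\|=o(1)$. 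Consequently, the assumed nonemptiness of $\sigma_{\mathrm{ess}}(k,\Delta_M)$ forces $H^{k''}(F_i)\neq 0$ for some $0\leq k''\leq s$ and all large $i$, and the $A$-regularity of Theorem~\ref{thm10} supplies an approximately parallel section $h_i$ of the fiber cohomology bundle over the relevant part of the base.

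For any $\lambda\geq 0$, set $k'=k-k''$. Since $X'$ is flat and noncompact and the strong collapsing condition $\dim X=\dim X'$ prevents any further drop in dimension under blow-down (so that $X'$ retains a full-dimensional translational direction in its regular part), Theorem~\ref{thmF2} combined with Definition~\ref{lem81} gives $\sigma(k',\Delta_{X'})=[0,\infty)$. Pick a smooth, compactly-supported $k'$-form $\eta_\lambda$ on $X'$ of unit norm with $\|(\Delta_{X'}-\lambda)\eta_\lambda\|$ arbitrarily small (for instance, a truncated plane wave supported away from the singular locus). Transfer $\eta_\lambda$ to $X$ via the asymptotic isometry and lift it to $M_i$ by
\[
\omega_\lambda^i=\phi_i\cdot f_i^{*}(\eta_\lambda)\wedge h_i,
\]
where $\phi_i$ is a smooth cutoff supported in $B_{x_i}(R_i')$. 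Using Theorem~\ref{thm10} and arguments parallel to those in the proof of Theorem~\ref{thmSpecGH}, one verifies $\|(\Delta_M-\lambda)\omega_\lambda^i\|/\|\omega_\lambda^i\|=o(1)$ as $i\to\infty$; and since the supports of the $\omega_\lambda^i$ are disjoint and escape to infinity, the sequence is weakly null. Corollary~\ref{cor21} then yields $\lambda\in\sigma_{\mathrm{ess}}(k,\Delta_M)$, and since $\lambda\geq 0$ was arbitrary we conclude $\sigma_{\mathrm{ess}}(k,\Delta_M)=[0,\infty)$.

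The hardest step will be constructing the approximately parallel section $h_i$ of the fiber cohomology bundle and quantifying the error $\|(\Delta_M-\lambda)\omega_\lambda^i\|$ uniformly in $i$, because the almost-flat fibers $F_i$ vary both over the base and in $i$ and a priori the cohomology bundle carries nontrivial holonomy from the base. The uniform covariant-derivative bounds on $f_i$ from $A$-regularity, combined with the matching of blow-down singular sets under the strong collapsing hypothesis, should rule out that holonomy obstruction and make the lift construction robust.
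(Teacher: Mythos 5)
Your overall architecture is the right one --- a dichotomy governed by the low-energy fiber forms, product-type test forms $\phi_i\, f_i^*(\eta_\lambda)\wedge h_i$ for the $[0,\infty)$ half, and the fiber spectral gap $A\cdot\mathrm{diam}(Z)^{-2}\to\infty$ for the other half --- and this matches the paper's strategy. But two steps as written would fail. First, your degree bookkeeping is vacuous: ``$H^{k''}(F_i)\neq 0$ for some $0\leq k''\leq s$'' is always true (take $k''=0$), so it cannot be what nonemptiness of $\sigma_{\mathrm{ess}}(k,\Delta_M)$ forces. The correct criterion, which the paper extracts from Lott's Proposition 2, is the existence of a nonzero $\nabla^{\mathrm{aff}}$-\emph{parallel} $k''$-form on the infranil fiber $Z=N/\Gamma$ in a degree $k''$ with $k-k''\leq\dim X'$; de Rham cohomology and affine-parallel forms are not interchangeable on an infranilmanifold, and it is only the parallel forms that have Rayleigh quotient $o(1)$ rather than $\gtrsim\mathrm{diam}(Z)^{-2}$. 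Relatedly, your claim that $\sigma(k',\Delta_{X'})=[0,\infty)$ ``by Theorem \ref{thmF2}'' is unjustified and false for general flat quotients (the paper's own Hantzsche--Wendt example has $\sigma_{\mathrm{ess}}(2,\Delta)=[a,\infty)$ with $a>0$); what is actually used is that near a \emph{regular} point of $X'$ the rescaled balls of radius $\delta\eps_i^{-1/2}$ are almost Euclidean, so one takes local truncated plane waves there, not global eigenforms of $X'$.

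Second, and more seriously, the emptiness half of the dichotomy is the hard part and your one-sentence ``fiberwise Hodge decomposition'' does not reach it. The decomposition and the spectral gap $c/\mathrm{diam}(F_i)^2$ are only available over the regular part of $X'$, where the Cheeger--Fukaya--Gromov fibration exists with uniform bounds; a priori a sequence of approximate eigenforms with bounded Rayleigh quotient could concentrate its $L^2$ mass in $f_i^{-1}(X'_{\mathrm{sing}}(\eps_o))$, where no such gap is known. The paper must therefore prove that a definite fraction of the mass of the Dirichlet eigenform $\omega_i$ lives over $Y_i(\eps_o)$, away from the singular locus; this requires the uniform $L^\infty$ bound \eqref{e9_3} from Moser iteration together with the estimate that $\mathrm{Vol}(M_i(1)\setminus Y_i(\eps_o/2))/\mathrm{Vol}(M_i(1))$ is small, and only then does the fiber gap force $\lambda_i\to\infty$. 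Your proposal contains no mechanism for excluding concentration near the singular set, so as written the ``empty'' alternative is not established.
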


\begin{proof}

Let $f_i$ denote the $\eps$ approximation maps
\begin{equation}\label{fibration}
f_i: ( M_i ,x_i, \eps_i g_i, \delta ) \to (X',x_\infty', g_\infty',\delta)
\end{equation}
for some $\delta>0$, for $i$ large enough.

Let $X'_\mathrm{sing}$ be the singular locus of $X'$.  Let $X'_{\rm sing}(\eps)$ be the $\eps$-neighborhood of $X'_{\rm sing}$ and denote by $\pa X'(\eps)$ the $\eps$-neighborhood of $\pa X'$.  Since $X'_{\rm sing}$ consists of a  finite union of submanifolds, the measure  of  $X'_{\rm sing}(\eps)$ is no more than $O(\eps)$ \cite{Fuk3}. 

For $i\gg 0$, we consider the points  $x\notin X'_{\rm sing}(\eps)\cup \pa X'(\eps)$.

\begin{lem}
Assume that the dimensions of $X$ and $X'$ are the same. Let $x\in X'$ be a point, and let $Z_i(x)$ be the fiber with respect to the fibration~\eqref{fibration}. Then
\[
{\rm diam}\, (Z_i(x))/\sqrt{\eps_i}\to 0
\]
as $i\to\infty$.
\end{lem}

\begin{proof}
If not, then after rescaling back, if there is a $\delta$ such that
\[
{\rm diam}\, (Z_i(x))/\sqrt{\eps_i}> \delta
\]
for $i\gg 0$. Thus the dimension of the limit is greater than the dimension of $X'$, which is a contradiction.
\end{proof}

 As we have seen in the previous section $f_i^{-1}(x)$ is an almost flat manifold of dimension $n-s$.

Using the same notation as \cite{lott}, we note that the fiber $Z$ can be given the structure of an infranil manifold $Z=N/\Gamma$ where $\Gamma$ is a discrete subgroup of $\mathrm{Aff}(N)$. Then the flat connection $\n^{\mathrm{aff}}$ on $N$ descends to a flat coneection on $TZ$. We denote $\Delta^{Z}$  the Laplacian on forms of this fiber and $\Delta^{\mathrm{inv}}$ the Laplacian on the space of differential forms on $Z$ which are parallel with respect to $\n^{\mathrm{aff}}$.

Let $\lambda_i(f_i^{-1}(x),k)$ denote the first eigenvalue of $\Delta^{Z}$ on $k$-forms over the fiber $Z=f_i^{-1}(x)$. We have the following result, whose proof we defer to the end of this section.
\begin{lem} \label{lem9a}
Suppose that for a point $x\in X'-X'_{\rm sing}$, the fiber $Z=f_i^{-1}(x)$ has a nonzero parallel $l$-form with respect to the affine connection for some $l\leq n/2$. Then
\[
\sigma_{\rm ess}(p,\Delta_M)=[0,\infty)
\]
for $l\leq p\leq n-l$ if $l+s\geq n/2$, and for $l\leq p\leq l+ s$ and $n-(s+l) \leq p \leq n-l$ otherwise.
\end{lem}

Consider  $k\leq n/2$. If $k\leq s,$ then it is clear from the previous section that $\sigma_\mathrm{ess}(k,\Delta_M)=[0,\infty)$ by constructing approximating eigenfunctions using the smooth part of $X'$.

If $s<k \leq n/2$  we have two cases. If the fiber $Z=f_i^{-1}(x)$ has a nonzero parallel $l$-form for some $l\geq 1$, then by Lemma \ref{lem9a}  $\sigma_\mathrm{ess}(k,\Delta_M)=[0,\infty)$ for $l\leq p\leq n-l$ if $l+s\geq n/2$, and  for  $l\leq p\leq l+ s$ and $n-(s+l) \leq p \leq n-l$ otherwise.

To complete the proof of the theorem it suffices to show that in the remaining case the essential spectrum is empty.

Consider the  $k$-eigenforms $\omega_i$ on  $M_i= ( M_i ,x_i,  g_i, \delta \eps_i^{-1/2} )$ such that
\[
\Delta_i \omega_i = \lambda_i \omega_i,\quad |\omega_i||_{\pa M_i}=0,
\]
where $\lambda_i\geq 0$ is the first eigenvalue of the Laplacian $\Delta_i$ on  $M_i$.

Let $R$ be a large positive number. We rescale $\w_i$ such that $\|\w_i\|^2_{L^2(M_i(R))} / \mathrm{Vol}(M_i(R)) =1$ where $M_i(R)= ( M_i ,x_i,  g_i, R )$. For simplicity we will take $R=1.$

Suppose that the eigevalues $\lambda_i$ are uniformly bounded. Using the asymptotical flatness of the geodesic balls (and their tangent spaces), the Poincar\'e inequality and a Moser's iteration argument give us
\begin{equation} \label{e9_3}
\|\w_i\|_{L^{\infty}(M_i(1))} \leq C
\end{equation}
for some uniform constant $C$ independent of $i$.

For any $r>0$ define
\[
Y_i(r)=\{ y \in  M_i(1)\, \big| \, \mathrm{dist} (f_i(y), X_{\rm sing}) \geq r \}.
\]

Fix $\eps_o>$ and  consider $Y_i(\eps_o).$  We claim that there exists a positive constant $C$ such that for all $i\gg 1$ there exists a point $y_i \in Y_i$ such that
\[
\frac{1}{\mathrm{Vol}(M_i(1))} \int_{B_{y_i}(\eps_o/2)} |\w_i|^2 \geq C_o.
\]

If not, then for any $m\in \mathbb{N}$ there exists an $i=i(m)$ such that for all $y\in Y_i(\eps_o)$
\[
\frac{1}{\mathrm{Vol}(M_i(1))} \int_{B_{y}(\eps_o/2)} |\w_i|^2 \geq \frac 1m.
\]

Taking an $\eps/2$ Gromov cover of $Y_i(\eps_o)$ such that at most $C(n)$  balls of radius $\eps/2$ intersect at each point, we have that
\[
\frac{1}{\mathrm{Vol}(M_i(1))} \int_{Y_i(\eps_o/2)} |\w_i|^2 \geq \frac{C(n)}{m}.
\]
This implies that
\[
\frac{1}{\mathrm{Vol}(M_i(1))} \int_{M_i(1)\setminus   Y_i(\eps_o/2)} |\w_i|^2 \geq 1- \frac{C(n)}{m} \geq \frac 12
\]
for all $i$ large enough. Combining this with the uniform bound \eref{e9_3} we get
\[
\frac{\mathrm{Vol}(M_i(1)\setminus   Y_i(\eps_o/2))}{\mathrm{Vol}(M_i(1))} \geq \frac C2
\]
independently of $\eps_o$. But this contradicts the fact that the volume of $M_i(1)\setminus   Y_i(\eps_o/2)$ should vanish as $\eps_o \to 0$.

As a consequence of the claim we now have
\begin{equation} \label{e9_2}
\frac{\int_{M_i(1)}|\n  \w_i|^2}{\int_{M_i(1)}|\w_i|^2} \geq C_o \frac{\int_{B_{y_i}(\eps_o/2)}|\n  \w_i|^2}{\int_{B_{y_i}(\eps_o/2)}|\w_i|^2}.
\end{equation}

By Proposition 2 in \cite{lott}
\[
\sigma(l,\Delta^{Z}) \cap [0, A \cdot \text{diam}(Z)^{-2}) =\sigma (l,\Delta^{\mathrm{inv}}) \cap  [0, A \cdot \mathrm{diam}(Z)^{-2})
\]
for some constant $A$ that depends only on the dimension of $Z$. The order of the $k$-form requires that the form must have a component along the fiber. However, the absence of parallel $l$-forms on the fibers $Z$ implies that
\[
\frac{\int_{{B_{y_i}(\eps_o/2)}} |\n    \w_i|^2}{ \int_{{B_{y_i}(\eps_o/2)}} | \w_i|^2} \geq A \cdot \text{diam}(Z)^{-2}.
\]

As a result, the right side of \eref{e9_2} becomes infinitely large as $i\to \infty$. In particular we get that for any $R\gg1$
\begin{equation*}
\frac{\int_{M_i(R)}|\n  \w_i|^2}{\int_{M_i(R)}|\w_i|^2} \to \infty
\end{equation*}
as $i\to\infty$. This however contradicts the uniform upper bound on the eigenvalues $\lambda_i$. Therefore $\lambda_i \to \infty$ and the essential spectrum must be empty.
\end{proof}

\begin{proof}[Proof of Lemma~\ref{lem9a}]
Denote $\|\cdot\|=\|\cdot\|_{L^2(Z)}$. By \cite{ruh} the covariant Laplacian $\n^Z$ and  $\n^{\mathrm{aff}}$ are close on almost-flat manifolds, and their difference is given by the curvature of the space. Since in our case the curvature of $Z$ approaches $0$ as $i\to \infty$ and its diameter is small, we have that $|\n^N - \n^{\mathrm{aff}}| = o(1)$.

Let $\eta$ denote a parallel $l$-eigenform for $\Delta^{\mathrm{inv}}$ over the fiber $Z$, such that $\n^{\mathrm{aff}} \eta =0$.  Then
\[
\frac{\|\n^Z \eta\|}{\|\eta\|} =o(1).
\]
The lemma follows by the asymptotic flatness of $Z$.
\end{proof}

\section{The $k$-form Spectrum over Manifolds with Asymptotically Nonnegative Ricci Curvature}\label{10}

In this final section we will prove Theorem \ref{thmRic}. For the positive sequence $R_i$ in the assumption of the theorem,
let $\eps_i\to 0$ such that $ \sqrt{\eps_i} R_i \geq 7$ and $\eps_i^{-1} \delta_i \to 0$. Consider the rescaled balls\footnote{To simplify notation, and if it is otherwise clear, we also denote by $g$ the restriction, $g_i$, of $g$ to $M_i$.}
\[
(M_i, x_i, \eps_i g, \sqrt{\eps_i} R_i).
\]
Then
\[
\mathrm{Ric}_{(M_i, x_i, \eps_i g, \sqrt{\eps_i} R_i)} \geq - \eps_i^{-1} \delta_i \to 0.
\]
By the Gromov compactness theorem, possibly after taking a subsequence,
\begin{equation}\label{conv2}
(M_i, x_i, \eps_i g, \sqrt{\eps_i} R_i) \to (X', x_\infty', g_\infty', R_o)
\end{equation}
under  the pointed Gromov-Hausdorff distance, where $5\leq R_o\leq \infty$ and $(X', x_\infty',g_\infty',R_o)$ is a pointed complete metric space.

\begin{lem} \label{lemGH}
Under the assumption of Theorem~\ref{thmRic},
there exists a sequence $p_i\in M_i$, and a positive sequence $r_i>0$ with $r_i^{-1}\eps_i\to 0$, such that
\[
d_{GH}\left((M_i, p_i, r_i^{-1}\eps_i g, 5),(\mathbb{R}^q, 0, g_E, 5)\right) \to 0
\]
as $i\to \infty$, for some $q>0$. Here $g_E$ is the standard Euclidean metric of $\R^q$.
\end{lem}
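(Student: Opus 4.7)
The plan is to combine Gromov's precompactness theorem with the Cheeger--Colding structure theory of regular points in Ricci limit spaces, and then extract a ``second rescaling'' by a diagonal argument. The convergence \eqref{conv2} already recorded in the paper gives a pointed Gromov--Hausdorff limit $(X',x'_\infty,g'_\infty,R_o)$ with $R_o\ge 5$ of the rescaled balls $(M_i,x_i,\eps_i g,\sqrt{\eps_i}R_i)$, which are a sequence of pointed manifolds with uniform lower Ricci bound $-\eps_i^{-1}\delta_i\to 0$. This is the setting in which Cheeger--Colding theory applies.

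First I would invoke Cheeger--Colding: for a pointed GH limit of manifolds with lower Ricci bounds, the set of \emph{regular points} (points at which every tangent cone is isometric to some pointed Euclidean space $(\R^q,0,g_E)$) has full renormalized measure, and is therefore dense. Choose a regular point $p_\infty$ in the interior of $B(x'_\infty,4)\subset X'$ and let $q\ge 1$ be the Euclidean dimension of a tangent cone at $p_\infty$. By the definition of tangent cone there is a sequence $s_j\to 0^+$ with
\[
(X',p_\infty, s_j^{-2}g'_\infty,5)\xrightarrow{d_{GH}}(\R^q,0,g_E,5).
\]

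Next, select $p_i\in M_i$ mapping to $p_\infty$ (up to an $o(1)$ error) under the pointed GH approximations underlying \eqref{conv2}. Since rescaling the metric by a fixed factor commutes with pointed GH convergence, for each fixed $j$ we have
\[
(M_i,p_i,s_j^{-2}\eps_i g,5)\xrightarrow{d_{GH}}(X',p_\infty, s_j^{-2}g'_\infty,5)\qquad\text{as }i\to\infty.
\]
A standard diagonal extraction now yields a map $i\mapsto j(i)\to\infty$ such that
\[
(M_i,p_i,s_{j(i)}^{-2}\eps_i g,5)\xrightarrow{d_{GH}}(\R^q,0,g_E,5).
\]
Because $\eps_i\to 0$, we may further require $s_{j(i)}\to 0$ slowly enough that $s_{j(i)}^2\gg\eps_i$, i.e.\ $s_{j(i)}^{-2}\eps_i\to 0$. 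Setting $r_i:=s_{j(i)}^2$ gives $r_i^{-1}\eps_i\to 0$ and the asserted convergence. The ball of radius $5$ in the metric $r_i^{-1}\eps_i g$ fits inside $M_i$: its radius in the original metric $g$ is $5s_{j(i)}/\sqrt{\eps_i}$, while the $g$-distance from $p_i$ to $\partial M_i$ is at least $(\sqrt{\eps_i}R_i-5)/\sqrt{\eps_i}\ge 2/\sqrt{\eps_i}$, and $s_{j(i)}\to 0$.

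The principal obstacle is the invocation of Cheeger--Colding's regularity theorem in this ``only Ricci lower bound'' regime; unlike the asymptotically flat setting of Sections~\ref{7}--\ref{9} we have no bound on the full curvature tensor nor a lower bound on the injectivity radius of $M_i$, so the limit $X'$ may be highly singular, and the existence of a Euclidean tangent cone at \emph{some} point must be extracted from the abstract structure theory. Once this is granted, the remaining steps are routine; the only mildly delicate bookkeeping is balancing the two smallness requirements (convergence to $\R^q$ and $r_i^{-1}\eps_i\to 0$), which is achieved by forcing $s_{j(i)}$ to decay strictly more slowly than $\sqrt{\eps_i}$.
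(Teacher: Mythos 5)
Your proposal is correct and follows essentially the same route as the paper: both select a $q$-regular point of the Cheeger--Colding limit $X'$, use that its tangent cone is $(\R^q,0,g_E)$, transfer this to $M_i$ via the Gromov--Hausdorff approximations, and let the second rescaling parameter tend to zero slowly enough to absorb both the approximation error and the requirement $r_i^{-1}\eps_i\to 0$. The only cosmetic difference is that you package the rate-balancing as a diagonal extraction, whereas the paper makes it explicit by bounding the total error by $r_i^{-1}\mu_i+\sigma_i$ and choosing $r_i\to 0$ slowly.
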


\begin{proof}
Let
\[
\mu_i=d_{GH}((M_i, x_i, \eps_i g, 7), (X',x'_\infty, g'_\infty, 7)).
\]
Then by~\eqref{conv2}, $\mu_i\to 0$.

Let $p\in X'$ be a $q$-regular ($q>0$) point defined in \cite{CCoIII}, which always exists under the assumption of Ricci curvature uniformly bounded below, and let $U$ be a small metric ball of radius $\delta$ at $p$. Since the tangent cone at the $q$-regular point $p$ is $\mathbb{R}^q$, for any sequence $r_i\to 0$,
\[
\sigma_i=d_{GH}((U,p,r_i^{-1} g'_\infty, \delta r_i^{-1/2}), (\R^q,0,g_E, \delta r_i^{-1/2}))\to 0,
\]

Let $p_i\in M_i$ such that via the $\eps$-Hausdorff approximation maps $\varphi_i: M_i\to X'$, we have $\varphi_i(p_i)=p$. Then
\[
d_{GH}((M_i,p_i,r_i^{-1}\eps_ig, \delta r_i^{-1/2}), (U,p,r_i^{-1}g_\infty',\delta r_i^{-1/2}))\leq r_i^{-1}\mu_i.
\]
By the triangle inequality, we have
\[
d_{GH}((M_i,p_i,r_i^{-1}\eps_i g, \delta r_i^{-1/2}), (\mathbb R^q,0,g_E,\delta r_i^{-1/2}))\leq r_i^{-1}\mu_i+\sigma_i.
\]
We can choose $r_i$ going to zero slowly  enough so that
\[
r_i^{-1}\mu_i+\sigma_i \to 0,
\]
and this completes the proof.
\end{proof}

\begin{lem} \label{lemGH2}
Let $(M,g)$  be a complete noncompact Riemannian manifold. Assume that on a sequence of geodesic balls $M_i=B_{x_i}(R_i)$ with $R_i\to \infty$ there exist $\delta_i \to 0$ such that
\[
\mathrm{Ric}_{M_i} \geq - \delta_i.
\]
Then for each $\lambda\geq 0$ there exist smooth approximate eigenfunctions $\phi_i$ whose support lies in $M_i$ such that
\[
\|(\Delta - \lambda) \phi_i\|  \leq o(1) \|\phi_i\|
\]
with $o(1)\to 0$ as $i\to \infty$. As a result the $L^2$ spectrum of the Laplacian on $M$ is $[0,\infty)$.
\end{lem}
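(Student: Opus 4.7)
The plan is to construct, for every $\lambda\geq 0$, a sequence of smooth compactly supported test functions $\phi_i$ on $M$ with $\mathrm{supp}(\phi_i)\subset M_i$ satisfying $\|(\Delta-\lambda)\phi_i\|/\|\phi_i\|\to 0$; the classical Weyl criterion then forces $[0,\infty)\subset\sigma(\Delta,M)$, and since the $M_i$ are disjoint and escape every compact set, the approximate eigenfunctions can be taken mutually orthogonal and weakly zero, so also $\sigma_{\rm ess}(\Delta,M)=[0,\infty)$.

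The first step is to refine Lemma~\ref{lemGH} by choosing the auxiliary sequence $r_i$ carefully (for instance $r_i=\eps_i^{1/2}$, after extracting a subsequence so that the Gromov--Hausdorff error $\mu_i$ in the proof of that lemma decays faster than any chosen polynomial in $\eps_i$). Setting $\tau_i:=r_i^{-1}\eps_i$ and $\rho_i:=\delta r_i^{-1/2}$ yields $\tau_i\to 0$, $\rho_i\to\infty$, the pointed Gromov--Hausdorff convergence $(M_i,p_i,\tau_i g,\rho_i)\to(\mathbb{R}^q,0,g_E,\rho_i)$, and the key balance $\rho_i^{\,3}\tau_i\to\infty$. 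In the rescaled metric $\tau_i g$ the Ricci lower bound becomes $-\delta_i/\tau_i$, which still tends to $0$ since $\delta_i\ll\eps_i\ll\tau_i$.

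I would then invoke the Cheeger--Colding $\eps$-splitting theorem on $B_{p_i}(\rho_i)\subset(M_i,\tau_i g)$ to produce harmonic functions $u^{(i)}_1,\dots,u^{(i)}_q$ on a slightly smaller ball with $\Delta_{\tau_i g}u^{(i)}_j=0$, uniformly bounded gradient, and $L^2$-small errors
\[
\frac{1}{\mathrm{Vol}(B_{p_i}(\rho_i))}\int\Bigl(\bigl|\langle\n u^{(i)}_j,\n u^{(i)}_k\rangle-\delta_{jk}\bigr|^2+|\mathrm{Hess}\,u^{(i)}_j|^2\Bigr)\,dV_{\tau_i g}\;\leq\;\eta_i\to 0,
\]
plus $C^0$-closeness of the map $u^{(i)}$ to the Euclidean coordinate projection. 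For a given $\lambda\geq 0$, pick $\xi\in\mathbb{R}^q$ with $|\xi|^2=\mu_i:=\lambda/\tau_i$ and a fixed smooth radial bump $\chi$ on $\mathbb{R}^q$, and set
\[
\phi_i(x):=\chi\bigl(u^{(i)}(x)/R_i\bigr)\,\cos\bigl(\xi\cdot u^{(i)}(x)\bigr),\qquad R_i\asymp\rho_i.
\]
Since the $u^{(i)}_j$ are harmonic the Bochner-style expansion kills $\Delta_{\tau_i g}(\xi\cdot u^{(i)})$, and one obtains
\[
(\Delta_{\tau_i g}-\mu_i)\phi_i=\Bigl[\sum_{j,k}\xi_j\xi_k\bigl(\langle\n u^{(i)}_j,\n u^{(i)}_k\rangle-\delta_{jk}\bigr)\Bigr]\phi_i+(\text{Hessian and truncation terms}),
\]
whose $L^2$ norms are $\lesssim\mu_i\sqrt{\eta_i}\|\phi_i\|$, $\sqrt{\eta_i}\|\phi_i\|$ and $(\sqrt{\mu_i}/R_i+1/R_i^{2})\|\phi_i\|$ respectively. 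Multiplying by $\tau_i$ to pass back to $g$ via $\Delta_g=\tau_i\Delta_{\tau_i g}$ and using the scale-invariance of $L^2$ ratios gives
\[
\frac{\|(\Delta_g-\lambda)\phi_i\|}{\|\phi_i\|}\;\lesssim\;\lambda\sqrt{\eta_i}+\tau_i\sqrt{\eta_i}+\sqrt{\lambda\tau_i}/R_i+\tau_i/R_i^{2}\to 0,
\]
where I substituted $\tau_i\mu_i=\lambda$, $R_i\asymp\rho_i$, and $\rho_i^{\,3}\tau_i\to\infty$.

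The principal obstacle is orchestrating the four scales $\eps_i,\tau_i,\eta_i,\rho_i$ simultaneously. Since the rescaled eigenvalue $\mu_i=\lambda/\tau_i$ blows up, the dominant error carries the potentially dangerous factor $\mu_i\sqrt{\eta_i}$; it is tamed only because the unscaling supplies an exactly compensating $\tau_i$, producing the harmless $\lambda\sqrt{\eta_i}$. Executing this requires both the gradient orthogonality \emph{and} the $L^2$-Hessian smallness of the Cheeger--Colding splitting theorem, together with a subsequence extraction that forces the Gromov--Hausdorff error $\mu_i$ of Lemma~\ref{lemGH} to decay faster than any prescribed power of $\eps_i$; verifying that the pullback plane-wave construction is robust even though $\mu_i\to\infty$ is the crux of the argument.
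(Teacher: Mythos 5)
Your proposal follows essentially the same route as the paper: rescale via Lemma~\ref{lemGH}, apply the Cheeger--Colding splitting theorem to obtain almost-orthonormal harmonic coordinates with $L^2$-small Hessians, and pull back a truncated Euclidean approximate eigenfunction through the harmonic map (the paper uses the radial wave $e^{i\sqrt{\lambda}\,r}\chi(r)\circ\Phi_i$ on the unscaled ball of radius $\eps_i^{-1/2}$ rather than your plane wave in the rescaled metric, but the two are equivalent up to bookkeeping). The only step left implicit in your sketch is the lower bound $\|\phi_i\|^2\gtrsim \mathrm{Vol}(B_{x_i}(4\rho_i+1))$ --- the paper's estimate \eqref{ric_7}, obtained from the $C^0$-closeness of $\Phi_i$ to a Gromov--Hausdorff approximation together with Bishop--Gromov --- which is what allows the volume-averaged Cheeger--Colding integral estimates to be converted into the claimed $O(\sqrt{\eta_i})\,\|\phi_i\|$ error bounds.
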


\begin{proof}
Replacing $r_i^{-1}\eps_i$ by $\eps_i$ in Lemma \ref{lemGH}, we may  assume that
\[
d_{GH}\left((M_i, x_i, \eps_i g, 5),(\mathbb{R}^q, 0, g_E, 5)\right) \to 0.
\]
By \cite{CCoII}*{Theorem 1.2}, there exists a harmonic map
\[
\Phi_i: (M_i, x_i, \eps_i g, 3) \to \mathbb{R}^q
\]
with  $\Phi_i(M_i, x_i, \eps_i g, 1) \subset (\mathbb{R}^q, 0, g_E, 2)$ and
\[
\mathrm{Lip} \,\Phi_i \leq c(n)
\]
with $c(n)$ independent of $i$. Writing the map $\Phi_i$ in coordinates, $\Phi_i = (b_{i,1}, \cdots, b_{i,q})$, and using  inequality (1.23) of \cite{CCoII} in the proof of the same theorem we obtain
\begin{align*}
&\int_{(M_i, x_i, \eps_i g, 1)} \sum_j |\mathrm{Hess} \,b_{i,j}|^2 \leq o(1) \cdot \mathrm{Vol} ((M_i, x_i, \eps_i g, 1)), \quad and \\
&\int_{(M_i, x_i, \eps_i g, 1)} \sum_j  |\<\n b_{i,j}, \n b_{i,l}\>-\delta_{jl}|\leq o(1)\cdot\mathrm{Vol} ((M_i, x_i, \eps_i g, 1))
\end{align*}
with $o(1)\to 0$ as $i \to \infty$.
We use the diffeomorphism $\sigma_i:(M_i, x_i, g, \eps_i^{-1/2}) \to (M_i, x_i, \eps_i g, 1)$ and a similar map on $(\mathbb{R}^q, 0, g_E, 1)$ to get rescaled  maps
\[
\Phi_i: (M_i, x_i,   g,  \eps_i^{-1/2}) \to  (\mathbb{R}^q, 0, \eps_i^{-1} g_E, \eps_i^{-1/2})=(\mathbb{R}^q, 0,  g_E, \eps_i^{-1/2}),
\]
which are also harmonic. It is clear  that the rescaled maps also satisfy $\mathrm{Lip} \,\Phi_i \leq c(n)$ and
\begin{align} \label{ric_2}
\begin{split}
&\int_{(M_i, x_i, g, {\eps_i}^{-1/2})} \sum_j |\mathrm{Hess}\, b_{i,j}|^2 \leq o(1)\cdot\eps_i\cdot \mathrm{Vol} ((M_i, x_i, g, \eps_i^{-1/2})),\\
&\int_{(M_i, x_i, g, {\eps_i}^{-1/2})} \sum_j  |\<\n b_{i,j}, \n b_{i,l}\>-\delta_{jl}|\leq o(1)\cdot\eps_i\cdot\mathrm{Vol} ((M_i, x_i, g, \eps_i^{-1/2})).
\end{split}
\end{align}

Let $r(x)$ be the distance to the point $0\in \mathbb{R}^q$.  We will denote $ B_i(r)=\{ x\in  M_i \; \mid \; |\Phi_i(x)|\leq r \; \}$ and $\mathrm{Vol} (B_{x_i}(r)) = V_i(r). $

By estimate (1.44) and inequality (1.6) of Theorem 1.2 in \cite{CCoII} we have, after rescaling, that for any $q_1, q_2 \in (M_i, x_i, g, {\eps_i}^{-1/2})$
\[
|\overline{\Phi_i(q_1),\Phi(q_2)} - \overline{q_1,q_2}| \leq o(1) \eps_i^{-1/2}  \]
where $\overline{x,y}$ denotes the distance in the respective metrics. This implies that for any $\frac 12 {\eps_i}^{-1/2}<r<{\eps_i}^{-1/2}$
\[
(M_i, x_i, g, (1 - o(1)) r\,) \subset B_i(  r) \subset (M_i, x_i, g, (1 + o(1))  r\,).
\]
Set $4\rho_i +1 =1/\sqrt{\eps_i}$. Together with the fact that the lower bound of the Ricci curvature of the balls of radius $2/\sqrt{\eps_i}$   approaches 0 and  the metrics are $\eps$-close,  we can  find $i$ large enough such that
\begin{equation} \label{ric_7}
 V_i(3\rho_i) - V_i(2\rho_i) \geq c_1(n)\,  V_i(4\rho_i+1)
\end{equation}
for some $c_1(n)>0$.

For a fixed $\lambda\geq 0$ we let $\phi_o(r)=e^{i \sqrt{\lambda}\, r} \chi(r),$  where $\chi(r)$ is a smooth function with support in $(\mathbb{R}^q, 0, g_E, {\eps_i^{-1/2}})$.  We choose $\chi$ such that $0\leq \chi(r) \leq 1$, $\chi(r)=1$ for $r\in[2\rho_i,3\rho_i]$, $\chi(r)=0$ for $r\leq \rho_i, r\geq 4\rho_i$ and satisfying $|\chi'(r)|, |\chi''(r)| \leq C\eps_i^{1/2}$.

Define the test functions
\[
\phi_i= \phi_o \circ \Phi_i
\]
which are now supported in $(M_i, x_i, g, \eps_i^{-1/2})$. We observe that in the coordinates $(x_1,\cdots,x_q)$ of $\mathbb R^q$
\begin{equation}
\begin{split}
\Delta \phi_i & = -\sum_{j,l=1}^q\frac{\p^2\phi_o}{\p x_j \, \p x_l}  \<\n b_{i,j}, \n b_{i,l}\> \\
&  = -\sum_{j\neq l} \frac{\p^2\phi_o}{\p x_j \, \p x_l}  \; \<\n b_{i,j}, \n b_{i,l} \>   + \Delta_E  \phi_o  -
\sum_{j=1}^q \frac{\pa^2\phi_o}{\pa x_j^2}(|\n b_{i,j}|^2-1)
\end{split}
\end{equation}
since the $b_{i,j}$ are harmonic, where $\Delta_E$ is the Laplacian on the Euclidean space and $\n_k $ denotes the covariant derivative in the $k$th direction over $M_i$.

Then
\begin{equation*}
\begin{split}
|(\Delta - \lambda) \phi_i| & \leq |\mathrm{Hess}_E(\phi_o)| \; |\sum_{j\neq l} \<\n b_{i,j}, \n b_{i,l} \>| + |(\Delta_E - \lambda) \phi_o| \\
& \quad + \sum_j (|\mathrm{Hess}_E(\phi_o)| \cdot||\n b_{i,j}|^2 -1|).
\end{split}
\end{equation*}
It is well known that
\[
\|(\Delta_E - \lambda) \phi_o\|\leq o(1) V(4\rho_i +1).
\]
Using the fact that the Hessian of $\phi_o$ over the Euclidean space is bounded, that the $\n \Phi_i$ are uniformly bounded and that $\phi_o$ is an approximate eigenfunction on $\mathbb R^q$, together with estimate~\eqref{ric_2} and the above inequality, we have
\begin{equation} \label{ric_8}
\begin{split}
\|(\Delta - \lambda) \phi_i\|^2 & \leq C \; \int_{B_{x_i}(4\rho_i+1)}\sum_{j\neq l} |\<\n b_{i,j}, \n b_{i,l} \>|^2    + \|(\Delta_E - \lambda) \phi_o\|^2 \\
& \quad + C \sum_j \int_{B_{x_i}(4\rho_i+1)}   | |\n b_{i,j}|^2 -1|^2\\
&  \leq o(1) V(4\rho_i +1).
\end{split}
\end{equation}
On the other hand,
\[
\|\phi_i\|^2    \geq  V_i(3\rho_i) - V_i(2\rho_i) \geq c_1(n) V_i(4\rho_i +1)
\]
by~\eqref{ric_7}.
Combining the above with \eref{ric_8}, we obtain the desired estimate
\[
\|(\Delta - \lambda) \phi_i\|  \leq o(1) \|\phi_i\|.
\]
\end{proof}

Lemma \ref{lemGH2} allows us to find $L^2$ approximate eigenfunctions for the $[0,\infty)$ spectrum of manifolds with Ricci curvature asymptotically nonnegative. This was not previously possible, even on manifolds with Ricci curvature nonnegative. It also provides a novel proof of the same result due to the second author and D. Zhou~\cite{Lu-Zhou_2011} without resorting to Sturm's $L^p$ independence result \cite{sturm}.

Based on the above result, we are now able to prove Theorem~\ref{thmRic}.
\begin{proof}[Proof of Theorem~\ref{thmRic}]
Consider the forms $\omega_o=\phi_o(r) dx_1 \wedge \cdots \wedge dx_{k}$ on $\mathbb{R}^q$, with $\phi_o(r)$ defined as in the previous Lemma.  These are $L^2$ approximate $k$-eigenforms for $\lambda$ on $\mathbb{R}^q$.

Define the test $k$-forms
\[
\omega_i = \Phi_i^*(\omega_o)= \phi_i  \; db_{i,1} \wedge \cdots \wedge db_{i,k}
\]
for $\phi_i$ defined as in Lemma~\ref{lemGH2}.

By the Weitzenb\"ock formula, it is well known that
\begin{equation} \label{ric_1}
\begin{split}
(\Delta  - \lambda) \omega_i = & (\Delta \phi_i - \lambda \phi_i \,)  \, db_{i,1} \wedge \cdots \wedge db_{i,k} -2 \n_{\n \phi_i } (db_{i,1} \wedge \cdots \wedge db_{i,k}) \\
& + \phi_i \Delta  (db_{i,1} \wedge \cdots \wedge db_{i,k}).
\end{split}
\end{equation}

We will apply Corollary \ref{cor21} to show that $\lambda$ belongs to the spectrum of $M$.  Using formula \eref{ric_1}  together with the properties of the $b_{i,l}$ we get
\begin{equation} \label{ric_3}
\begin{split}
|(\,(\Delta +\alpha)^{-m}\omega_i,(\Delta  - \lambda) \omega_i \,)|  & \leq C \,\| \omega_i\|  \cdot  \left[
\|(\Delta - \lambda) \phi_i   \|   + \sum_j  \| |\n \phi_i|\cdot |\mathrm{Hess} (b_{i,j})| \, \|   \right]\\
& \quad + |(\,(\Delta +\alpha)^{-m}\omega_i,\phi_i \Delta (db_{i,1} \wedge \cdots \wedge db_{i,k}) \,)|,
\end{split}
\end{equation}
where we have used the fact that $(\Delta +\alpha)^{-m}$ is bounded on $L^2$ for the first two terms in the right side. We want to show that the right side is bounded above by $o(1) \|\omega_i\|^2$.

By the following algebraic inequality (using the fact that the gradients of the $b_{i,j}$ are uniformly bounded), we have
\[
|db_{i,1} \wedge \cdots \wedge db_{i,k}|^2\geq 1-C(n)\sum_{j,l}|\<\n b_{i,j}, \n b_{i,l}\>-\delta_{jl}|.
\]
Thus by~\eqref{ric_2} and ~\eqref{ric_7}, for $i$ sufficiently large we obtain
\[
\begin{split}
\|\omega_i\|^2 &\geq \int_{B_{x_i}(3\rho_i)\backslash B_{x_i}(2\rho_i)} |db_{i,1} \wedge \cdots \wedge db_{i,k}|^2\\
&\geq V(3\rho_i)-V(2\rho_i)- o(1) V(4\rho_i+1)\geq \frac 12c_1(n)V(4\rho_i +1).
\end{split}
\]

Combining the above with \eref{ric_8} we have
\begin{equation} \label{ric_4}
\|(\Delta - \lambda) \phi_i\|  \leq o(1) \|\omega_i\|
\end{equation}

Similarly, using \eref{ric_2} we can control the second term of \eref{ric_3}
\begin{equation} \label{ric_5}
\begin{split}
\sum_j  \| |\n \phi_i|\cdot |\mathrm{Hess} (b_{i,j})| \, \|   \leq o(1) \; V(4 \rho_i+1)^{1/2} \leq  o(1) \|\omega_i\|.
\end{split}
\end{equation}

Finally, for the third term, we let $\eta_i = (\Delta +\alpha)^{-m}\omega_i$ and observe that
\begin{equation*}
|(\,\eta_i ,\phi_i \Delta (db_{i,1} \wedge \cdots \wedge db_{i,k}) \,)| = |(\, \delta(\phi_i \eta_i) , \delta(db_{i,1} \wedge \cdots \wedge db_{i,k}) \,)|.
\end{equation*}
Since $ \delta(\phi_i \eta_i) = -\iota(\n \phi_i) \eta_i + \phi_i \delta \eta_i $, and
\begin{equation*}
\begin{split}
\|  \delta \eta_i \|^2 & \leq \|  \delta \eta_i \|^2 + \|  d \eta_i \|^2  = (\eta_i, \Delta (\Delta +\alpha)^{-m}\omega_i) \\
& =  (\eta_i,  (\Delta +\alpha)^{-m+1}\omega_i) - \alpha (\eta_i,  (\Delta +\alpha)^{-m}\omega_i ) \leq C\| \omega_i\|^2,
\end{split}
\end{equation*}
we have
\[
\|\delta (\phi_i\eta_i) \|\leq C\|\omega_i\|.
\]
Therefore,
\begin{align}  \label{ric_6}
\begin{split}
&|(\, \delta(\phi_i \eta_i) , \delta(db_{i,1} \wedge \cdots \wedge db_{i,k}) \,)| \leq C \|\omega_i\|  \;  \sum_j  \|\mathrm{Hess} (b_{i,j})\|  \leq o(1)   \|\omega_i\|\cdot V(4\rho_i+1).\\
\end{split}
\end{align}

Using \eref{ric_4}, \eref{ric_5} and \eref{ric_6} to estimate the right side of \eref{ric_3} we get
\[
|(\,(\Delta +\alpha)^{-m}\omega_i,(\Delta  - \lambda) \omega_i \,)|  \leq o(1)  \,\| \omega_i\|^2.
\]
The theorem follows from Corollary \ref{cor21} after rescaling each $\omega_i$ by its $L^2$ norm.
\end{proof}

We would like to remark that since $q>0$, the result of Theorem \ref{thmRic} is true over a manifold with asymptotically nonnegative Ricci curvature for the case of functions and 1-forms and is consistent with Theorem \ref{thm1forms}.  It is therefore also consistent with Corollary \ref{cor32}. For higher order forms however, it has not been possible to compute the $k$-form spectrum of manifolds with asymptotically nonnegative Ricci curvature. Our result addresses this case, at least partially.

If $q\geq n/2$, then for any $0\leq k\leq n$, the spectrum on $k$-forms is $[0,\infty)$. This observation gives us the following result
\begin{corl}
Let $M$ be a complete noncompact Riemannian manifold with nonnegative Ricci curvature.
Assume that  $M$ has Euclidean volume growth.  Then
\[
\sigma_{\rm ess}(k,\Delta, M) = [0,\infty)
\]
for $0\leq k\leq n$.
\end{corl}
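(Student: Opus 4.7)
The plan is to reduce this to Theorem~\ref{thmRic} by showing that under the hypotheses we may take the integer $q$ appearing in that theorem to be equal to $n$, in which case the conditions $k \leq q$ and $k \geq n - q$ together cover every $0 \leq k \leq n$.

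First I would verify that the hypotheses of Theorem~\ref{thmRic} are satisfied with $\delta_i = 0$ on a suitable expanding sequence of disjoint balls. Since $\mathrm{Ric}_M \geq 0$ globally, any choice of disjoint geodesic balls $M_i = B_{x_i}(R_i)$ with $R_i \to \infty$ works; one such sequence can be obtained by picking a ray from a basepoint and selecting points along it with spacing growing faster than any prescribed radius. This places us squarely inside the framework of Theorem~\ref{thmRic}.

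Next comes the main step: locating a point of maximal regularity in the asymptotic cone. By Cheeger-Colding theory, the combination of $\mathrm{Ric} \geq 0$ and Euclidean volume growth $\mathrm{Vol}(B_x(R))\geq c R^n$ implies that every tangent cone at infinity is a metric cone with positive asymptotic volume ratio, and in particular has Hausdorff dimension $n$. By the stratification results of Cheeger-Colding (see~\cite{CCoIII}), the regular set of any such tangent cone has full measure, and at a regular point the iterated tangent cone is $\mathbb{R}^n$. Thus in the proof of Lemma~\ref{lemGH} the $q$-regular point $p$ can be chosen with $q = n$. Passing through the diagonal argument in that lemma, we obtain points $p_i \in M_i$ and rescalings $r_i^{-1}\eps_i \to 0$ such that
\[
d_{GH}\!\left((M_i, p_i, r_i^{-1}\eps_i g, 5),\,(\mathbb{R}^n, 0, g_E, 5)\right) \to 0.
\]

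With $q = n$ in hand, the entire construction in Lemmas~\ref{lemGH}, \ref{lemGH2} and the proof of Theorem~\ref{thmRic} goes through for every $0 \leq k \leq n$: one produces Cheeger-Colding harmonic maps $\Phi_i = (b_{i,1},\dots,b_{i,n})$ from the rescaled balls to $\mathbb{R}^n$, pulls back the approximate $k$-eigenforms $\phi_o(r)\, dx_1 \wedge \cdots \wedge dx_k$ from $\mathbb{R}^n$, and applies the generalized Weyl criterion (Corollary~\ref{cor21}) to conclude $\lambda \in \sigma_{\rm ess}(k,\Delta,M)$ for every $\lambda \geq 0$. The main obstacle I expect is purely a bookkeeping one: confirming that Cheeger-Colding's regularity theory really does supply an $n$-regular point under Euclidean volume growth (as opposed to merely some $q$-regular point with $q < n$); once this is settled, the corollary follows immediately from Theorem~\ref{thmRic}.
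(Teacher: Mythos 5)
Your proposal is correct and follows essentially the same route as the paper: the corollary is deduced from Theorem~\ref{thmRic} by observing that Euclidean volume growth forces the noncollapsed Cheeger--Colding picture, so the regular point in Lemma~\ref{lemGH} can be taken to be $n$-regular and hence $q=n$ (the paper only needs $q\geq n/2$ for the ranges $k\leq q$ and $k\geq n-q$ to cover all $0\leq k\leq n$). Your added justification that the regular set of the tangent cone at infinity has full measure in the noncollapsed case is exactly the bookkeeping the paper leaves implicit, and it is correct.
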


Finally,  we would like to make the following  observation which is not directly related to
our results, but explains why we are able to obtain spectral information for the Hodge Laplacian in the absence of bounds for the curvature tensor. We assume that the Ricci curvature of $M$ is small.

We shall suppress the subscript $i$ and to use  $b_1, \cdots, b_q$ to denote the harmonic functions $b_{i,1}, \cdots, b_{i,q}$. By a straightforward computation, we have
\begin{equation*}
\begin{split}
\Delta  (db_{1} \wedge \cdots &\wedge db_{k}) =  \sum_l  db_{1} \wedge \cdots  \wedge  \Delta (db_{j}) \wedge \cdots \wedge db_{k}\\
& - 2 \sum_{i=1}^n  \sum_{l, m}   db_{1} \wedge \cdots  \wedge \n_{{\p}_i} (db_l) \wedge \cdots \wedge \n_{{\p}_i} (db_m) \wedge \cdots \wedge db_{k}
\end{split}
\end{equation*}
pointwise, where $\{{\p}_i\}$ is a normal coordinate frame field  with covectors $\{dx^i\}$.  Since the $b_j$ are harmonic, we get
\[
\Delta  (db_{1} \wedge \cdots \wedge db_{k}) =-2 \sum_{i=1}^n  \sum_{l, m}   db_{1} \wedge \cdots  \wedge \n_{{\p}_i} (db_l) \wedge \cdots \wedge \n_{{\p}_i} (db_m) \wedge \cdots \wedge db_{k}.
\]
By ~\eqref{ric_2}, we conclude that the average of the $L^1$ norm of $\Delta  (db_{1} \wedge \cdots \wedge db_{k})$ is very small without any further assumptions on the curvature tensor of $M$.

Similarly,
\begin{equation*}
\begin{split}
\n^* \n(db_{1} \wedge \cdots &\wedge db_{k}) =  \sum_l  db_{1} \wedge \cdots  \wedge  \n^* \n (db_{j}) \wedge \cdots \wedge db_{k}\\
& - 2 \sum_{i=1}^n  \sum_{l, m}   db_{1} \wedge \cdots  \wedge \n_{{\p}_i} (db_l) \wedge \cdots \wedge \n_{{\p}_i} (db_m) \wedge \cdots \wedge db_{k},
\end{split}
\end{equation*}
where $\n^* \n=-{\rm Tr}(\n^2)$ is the covariant Laplacian. By the Weitzenb\"ock formula,
\[
\n^* \n(db_j)=\Delta (db_j) - \iota(\n b_j){\rm Ric}
\]
since the Weitzenb\"ock tensor on 1-forms coinsides with the Ricci tensor. Thus
$\n^* \n (db_{1} \wedge \cdots \wedge db_{k})$ is also small in $L^1$.  As a result,
\[
\mathcal{W}_k(db_{1} \wedge \cdots \wedge db_{k})= \Delta (db_{1} \wedge \cdots \wedge db_{k})- \n^* \n (db_{1} \wedge \cdots \wedge db_{k})
\]
is small in the $L^1$ norm under the assumption of small Ricci curvature. Recall that
\[
\mathcal{W}_k=-\sum_{i,j} \omega^i \wedge \iota(V_j) R_{V_i V_j}
\]
where $\{V_i\}$ is an orthonormal frame field with dual co-frame $\{\omega^j\}$ and  $R$ is the curvature tensor. Here $R_{XY} = D_X D_Y -D_Y D_X -D_{[X,Y]}$.

\begin{bibdiv}
\begin{biblist}

\bib{Ant}{article}{
   author={Antoci, Francesca},
   title={On the spectrum of the Laplace-Beltrami operator for $p$-forms for
   a class of warped product metrics},
   journal={Adv. Math.},
   volume={188},
   date={2004},
   number={2},
   pages={247--293},

}

\bib{CharJFA}{article}{
   author={Charalambous, Nelia},
   title={On the $L^p$ independence of the spectrum of the Hodge
   Laplacian on non-compact manifolds},
   journal={J. Funct. Anal.},
   volume={224},
   date={2005},
   number={1},
   pages={22--48},
}

\bib{Char2}{article}{
   author={Charalambous, Nelia},
   title={On the equivalence of heat kernel estimates and logarithmic
   Sobolev inequalities for the Hodge Laplacian},
   journal={J. Differential Equations},
   volume={233},
   date={2007},
   number={1},
   pages={291--312},
}

\bib{char-lu-1}{article}{
   author={Charalambous, Nelia},
   author={Lu, Zhiqin},
   title={On the spectrum of the Laplacian},
   journal={Math. Ann.},
   volume={59},
   date={2014},
   number={1-2},
   pages={211--238},
}

\bib{ChLu5}{unpublished}{
   author={Charalambous, Nelia},
   author={Lu, Zhiqin},
   title={The spectrum of the Laplacian on forms over flat manifolds},
   note={}
}

\bib{CCoI}{article}{
   author={Cheeger, Jeff},
   author={Colding, Tobias H.},
   title={On the structure of spaces with Ricci curvature bounded below. I},
   journal={J. Differential Geom.},
   volume={46},
   date={1997},
   number={3},
   pages={406--480},
   issn={0022-040X},
   review={\MR{1484888}},
}

\bib{CCoII}{article}{
   author={Cheeger, Jeff},
   author={Colding, Tobias H.},
   title={On the structure of spaces with Ricci curvature bounded below. II},
   journal={J. Differential Geom.},
   volume={54},
   date={2000},
   number={1},
   pages={13--35},
}

\bib{CCoIII}{article}{
   author={Cheeger, Jeff},
   author={Colding, Tobias H.},
   title={On the structure of spaces with Ricci curvature bounded below.
   III},
   journal={J. Differential Geom.},
   volume={54},
   date={2000},
   number={1},
   pages={37--74},
}

\bib{CCo}{article}{
   author={Cheeger, Jeff},
   author={Colding, Tobias H.},
   title={Lower bounds on Ricci curvature and the almost rigidity of warped
   products},
   journal={Ann. of Math. (2)},
   volume={144},
   date={1996},
   number={1},
   pages={189--237},
}

\bib{CE}{book}{
   author={Cheeger, Jeff},
   author={Ebin, David G.},
   title={Comparison theorems in Riemannian geometry},
   note={Revised reprint of the 1975 original},
   publisher={AMS Chelsea Publishing, Providence, RI},
   date={2008},
   pages={x+168},
}

\bib{CFG}{article}{
   author={Cheeger, Jeff},
   author={Fukaya, Kenji},
   author={Gromov, Mikhael},
   title={Nilpotent structures and invariant metrics on collapsed manifolds},
   journal={J. Amer. Math. Soc.},
   volume={5},
   date={1992},
   number={2},
   pages={327--372},
}

\bib{CLY}{article}{
   author={Cheng, Siu Yuen},
   author={Li, Peter},
   author={Yau, Shing Tung},
   title={On the upper estimate of the heat kernel of a complete Riemannian
   manifold},
   journal={Amer. J. Math.},
   volume={103},
   date={1981},
   number={5},
   pages={1021--1063},
}

\bib{Cobb}{article}{
   author={Cobb, Peter V. Z.},
   title={Manifolds with holonomy group $Z_{2}\oplus Z_{2}$ and first
   Betti number zero},
   journal={J. Differential Geometry},
   volume={10},
   date={1975},
   pages={221--224},
   issn={0022-040X},
   review={\MR{0375147}},
}

\bib{dod}{article}{
   author={Dodziuk, Jozef},
   title={Eigenvalues of the Laplacian on forms},
   journal={Proc. Amer. Math. Soc.},
   volume={85},
   date={1982},
   number={3},
   pages={437--443},
}

\bib{Don81}{article}{
   author={Donnelly, Harold},
   title={On the essential spectrum of a complete Riemannian manifold},
   journal={Topology},
   volume={20},
   date={1981},
   number={1},
   pages={1--14},
}

\bib{Don}{article}{
   author={Donnelly, Harold},
   title={Spectrum of the Laplacian on asymptotically Euclidean spaces},
   journal={Michigan Math. J.},
   volume={46},
   date={1999},
   number={1},
   pages={101--111},
}

\bib{Don2}{article}{
   author={Donnelly, Harold},
   title={The differential form spectrum of hyperbolic space},
   journal={Manuscripta Math.},
   volume={33},
   date={1980/81},
   number={3-4},
   pages={365--385},
}

\bib{ElwW04}{article}{
   author={Elworthy, K. D.},
   author={Wang, Feng-Yu},
   title={Essential spectrum on Riemannian manifolds},
   conference={
      title={Recent developments in stochastic analysis and related topics},
   },
   book={
      publisher={World Sci. Publ., Hackensack, NJ},
   },
   date={2004},
   pages={151--165},
}
	
\bib{ES}{article}{
   author={Eschenburg, J.-H.},
   author={Schroeder, V.},
   title={Riemannian manifolds with flat ends},
   journal={Math. Z.},
   volume={196},
   date={1987},
   number={4},
   pages={573--589},
}

\bib{Esc86}{article}{
   author={Escobar, Jos{\'e} F.},
   title={On the spectrum of the Laplacian on complete Riemannian manifolds},
   journal={Comm. Partial Differential Equations},
   volume={11},
   date={1986},
   number={1},
   pages={63--85},
}

\bib{EF92}{article}{
   author={Escobar, Jos{\'e} F.},
   author={Freire, Alexandre},
   title={The spectrum of the Laplacian of manifolds of positive curvature},
   journal={Duke Math. J.},
   volume={65},
   date={1992},
   number={1},
   pages={1--21},
}

\bib{EF93}{article}{
   author={Escobar, Jos{\'e} F.},
   author={Freire, Alexandre},
   title={The differential form spectrum of manifolds of positive curvature},
   journal={Duke Math. J.},
   volume={69},
   date={1993},
   number={1},
   pages={1--41},
}

\bib{Fuk1}{article}{
   author={Fukaya, Kenji},
   title={Hausdorff convergence of Riemannian manifolds and its
   applications},
   conference={
      title={Recent topics in differential and analytic geometry},
   },
   book={
      series={Adv. Stud. Pure Math.},
      volume={18},
      publisher={Academic Press, Boston, MA},
   },
   date={1990},
   pages={143--238},
}

\bib{Fuk3}{article}{
   author={Fukaya, Kenji},
   title={A boundary of the set of the Riemannian manifolds with bounded
   curvatures and diameters},
   journal={J. Differential Geom.},
   volume={28},
   date={1988},
   number={1},
   pages={1--21},
}

\bib{Gr1}{book}{
   author={Gromov, Misha},
   title={Metric structures for Riemannian and non-Riemannian spaces},
   series={Modern Birkh\"auser Classics},
   edition={Reprint of the 2001 English edition},
   note={Based on the 1981 French original;
   With appendices by M. Katz, P. Pansu and S. Semmes;
   Translated from the French by Sean Michael Bates},
   publisher={Birkh\"auser Boston, Inc., Boston, MA},
   date={2007},
   pages={xx+585},
}

\bib{GrS}{article}{
   author={Gromov, M.},
   author={Shubin, M. A.},
   title={von Neumann spectra near zero},
   journal={Geom. Funct. Anal.},
   volume={1},
   date={1991},
   number={4},
   pages={375--404},
   issn={1016-443X},
   review={\MR{1132295}},
   doi={10.1007/BF01895640},
}

\bib{Ho}{article}{
   author={Honda, Shouhei},
   title={Spectral convergence under bounded Ricci curvature},
   journal={J. Funct. Anal.},
   volume={273},
   date={2017},
   number={5},
   pages={1577--1662},
   issn={0022-1236},
   review={\MR{3666725}},
}

\bib{mazz}{article}{
   author={Mazzeo, Rafe},
   author={Phillips, Ralph S.},
   title={Hodge theory on hyperbolic manifolds},
   journal={Duke Math. J.},
   volume={60},
   date={1990},
   number={2},
   pages={509--559},
   issn={0012-7094},
   review={\MR{1047764}},
}

\bib{Post2}{article}{
   author={Lled{\'o}, Fernando},
   author={Post, Olaf},
   title={Existence of spectral gaps, covering manifolds and residually
   finite groups},
   journal={Rev. Math. Phys.},
   volume={20},
   date={2008},
   number={2},
   pages={199--231},
   issn={0129-055X},
   review={\MR{2400010}},
   doi={10.1142/S0129055X08003286},
}

\bib{lott}{article}{
   author={Lott, John},
   title={Collapsing and the differential form Laplacian: the case of a
   smooth limit space},
   journal={Duke Math. J.},
   volume={114},
   date={2002},
   number={2},
   pages={267--306},
}

\bib{lott1}{unpublished}{
  author={Lott, John},
  title={Collapsing and the differential form Laplacian: the case of a
   singular limit space},
  note={Preprint, https://math.berkeley.edu/~lott/sing.pdf},
  }

\bib{Lott2}{article}{
   author={Lott, John},
   title={On the spectrum of a finite-volume negatively-curved manifold},
   journal={Amer. J. Math.},
   volume={123},
   date={2001},
   number={2},
   pages={185--205},
}
	
\bib{luxu}{unpublished}{
author={Lu, Zhiqin},
author={Xu, Hang},
note={preprint},
}

\bib{Lu-Zhou_2011}{article}{
   author={Lu, Zhiqin},
   author={Zhou, Detang},
   title={On the essential spectrum of complete non-compact manifolds},
   journal={J. Funct. Anal.},
   volume={260},
   date={2011},
   number={11},
   pages={3283--3298},
}

\bib{Oneill}{article}{
   author={O'Neill, Barrett},
   title={The fundamental equations of a submersion},
   journal={Michigan Math. J.},
   volume={13},
   date={1966},
   pages={459--469},
}

\bib{Post1}{article}{
   author={Post, Olaf},
   title={Periodic manifolds with spectral gaps},
   journal={J. Differential Equations},
   volume={187},
   date={2003},
   number={1},
   pages={23--45},
   issn={0022-0396},
   review={\MR{1946544}},
   doi={10.1016/S0022-0396(02)00006-2},
}

\bib{ruh}{article}{
   author={Ruh, Ernst A.},
   title={Almost flat manifolds},
   journal={J. Differential Geom.},
   volume={17},
   date={1982},
   number={1},
   pages={1--14},
   issn={0022-040X},
   review={\MR{658470}},
}

\bib{SchoTr}{article}{
   author={Schoen, Richard},
   author={Tran, Hung},
   title={Complete manifolds with bounded curvature and spectral gaps},
   journal={J. Differential Equations},
   volume={261},
   date={2016},
   number={4},
   pages={2584--2606},
   issn={0022-0396},
   review={\MR{3505201}},
   doi={10.1016/j.jde.2016.05.002},
}

\bib{sturm}{article}{
   author={Sturm, Karl-Theodor},
   title={On the $L^p$-spectrum of uniformly elliptic operators on
   Riemannian manifolds},
   journal={J. Funct. Anal.},
   volume={118},
   date={1993},
   number={2},
   pages={442--453},
   issn={0022-1236},
}
	
\bib{Wang97}{article}{
   author={Wang, Jiaping},
   title={The spectrum of the Laplacian on a manifold of nonnegative Ricci
   curvature},
   journal={Math. Res. Lett.},
   volume={4},
   date={1997},
   number={4},
   pages={473--479},
}

\bib{Wolf}{book}{
   author={Wolf, Joseph A.},
   title={Spaces of constant curvature},
   edition={6},
   publisher={AMS Chelsea Publishing, Providence, RI},
   date={2011},
   pages={xviii+424},
}

\end{biblist}
\end{bibdiv}

\end{document}